\def\LaTeX{\leavevmode L\raise.42ex
    \hbox{\kern-.3em\size{\sf@size}{0pt}\selectfont A}\kern-.15em\TeX}
\newcommand{\BibTeX}{{\rm B\kern-.05em{\sc
          i\kern-.025emb}\kern-.08em\TeX}}
\def\@currentlabel{2.1}\label{e:dispaa}
\def\@currentlabel{2.21}\label{e:dispau}
\def\@currentlabel{2.22}\label{e:dispav}
\def\@currentlabel{2.23}\label{e:dispaw}
\def\@currentlabel{2.24}\label{e:dispax}
\def\theequation{\thesection.\@arabic\c@equation}
\renewcommand{\theequation}{\arabic{section}.\arabic{equation}}
\newtheorem{thm}{Theorem}[section]
\newtheorem{lem}[thm]{Lemma}
\newtheorem{cor}[thm]{Corollary}
\newtheorem{prop}[thm]{Proposition}
\theoremstyle{definition}
\newtheorem{defn}{Definition}
\newtheorem{rem}[thm]{Remark}
\newcommand{\R}{\mathbb{R}}
\title[Weighted elliptic equations]
{Embeddings of weighted Sobolev spaces and a weighted fourth order elliptic equation}
\author{Zongming Guo}
\address{Department of Mathematics, Henan Normal University,
Xinxiang, 453007, P.R. China} \email{gzm@htu.cn}
\author{Fangshu Wan}
\address{Department of Mathematics, Henan Normal University,
Xinxiang, 453007, P.R. China}
\email{fangshuwan@163.com}
\author{Liping Wang*}
\address{Department of Mathematics, Shanghai Key Laboratory of Pure Mathematics and Mathematical Practice, East China Normal University, 200241, P.R. China}
\email{lpwang@math.ecnu.edu.cn}
\thanks{*: corresponding author.  \\
The research of the first author is supported by NSFC
11571093 and the third author is supported by NSFC 11671144.}
\date{}
\begin{document}

\maketitle
\begin{abstract}
New embeddings of weighted Sobolev spaces are established. Using such embeddings, we obtain the existence and regularity of positive solutions with Navier boundary value problems for a weighted fourth order elliptic equation. We also obtain
Liouville type results for the related equation. Some problems are still open.
\end{abstract}

Keywords:  positive solutions, fourth order with weights, Liouville theorem\\

Mathematics Subject Classification 2000:   35B45;  35J40

\maketitle \baselineskip 18pt
\section{Introduction}
\setcounter{equation}{0}

We study structure of nonnegative solutions of the weighted fourth order elliptic equation
$$ \Delta (|x|^\alpha \Delta u)=|x|^l u^p \;\;\mbox{in $\R^N$}, \leqno(P)$$
where $N \geq 5$, $p>1$, $\alpha, l \in \R$.

For a bounded smooth domain $\Omega \subset \R^N$ and $0 \in \Omega$, we also study the Navier boundary value problem
$$\left \{ \begin{array}{ll} \Delta (|x|^\alpha \Delta u)=|x|^l u^p \;\; & \mbox{in $\Omega$},\\
 u=\Delta u=0 \;\; &\mbox{on $\partial \Omega$}.
 \end{array} \right. \leqno(Q)$$

By a positive solution of (P), we mean that $u \in C^4 (\R^N \backslash \{0\}) \cap C^0 (\R^N)$,
$|x|^\alpha \Delta u \in C^0 (\R^N)$,
$u>0$ in $\R^N$ and $u$ satisfies (P) in $\R^N \backslash \{0\}$. By a positive solution of (Q), we mean that $u \in C^4 (\Omega \backslash \{0\}) \cap C^0 ({\overline \Omega})$, $|x|^\alpha \Delta u \in C^0 ({\overline \Omega})$, $u>0$ in $\Omega$ and $u$ satisfies (Q) in $\Omega \backslash \{0\}$.

Throughout this paper, we assume
\begin{equation}
\label{1.1}
2N>N':=N+\alpha>4, \;\;\; \tau:=l-\alpha>-4,
\end{equation}
\begin{equation}
\label{1.1-1}
p_s:=\frac{N'+4+2 \tau}{N'-4} \; (>1).
\end{equation}
It is known from \eqref{1.1} that $\alpha \in (4-N,N)$.

Equation (P) and problem (Q) have been studied by many authors recently, in particular, in the case of the pure biharmonic operator, see, for example, \cite{AGGM, BFFG, CC, CM, CEGM, DDGM, DGGW, EFJ, GG, Guo, GHZ, GW1, GW2, GW3, Lin, Mu} and the references therein.

Under the assumptions in \eqref{1.1}, the following Liouville theorem for (P) is
still  open.

{\it {\bf Open problem 1:} Assume that $N \geq 5$ and \eqref{1.1} holds, $u \in C^4 (\R^N \backslash \{0\}) \cap C^0 (\R^N)$ and $|x|^\alpha \Delta u \in C^0 (\R^N)$ is a nonnegative solution of (P). Then
$u \equiv 0$ in $\R^N$ provided $1<p<p_s$}.

 The positive answer to this open problem for $\alpha=l=0$ was given in \cite{Lin}  via the moving plane argument. But it does not apply for our case here (especially for $l>0$), since the weights do not match with the procedure of this argument. In this paper, we will give a partial answer to this open problem. More precisely,  when $\alpha \in (4-N, N)$, we obtain the positive answer if $1<p<\min \{\frac{N+4}{N-4}, p_s\}$. Especially, when  $\alpha=2$ we confirm the result  for $1<p<p_s$. Namely, we have the following theorems.

\begin{thm}
\label{t0.1}
Assume $N \geq 5$ and \eqref{1.1} holds. If $u \in C^4 (\R^N \backslash \{0\})
\cap C^0 (\R^N)$ and $|x|^\alpha \Delta u \in C^0 (\R^N)$ is a nonnegative solution to (P). Then $u \equiv 0$ in $\R^N$ provided $1<p<\min \{\frac{N+4}{N-4}, p_s \}$.
\end{thm}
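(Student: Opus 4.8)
The natural strategy is to reduce the fourth-order equation (P) to a second-order system and then argue by contradiction using an integral (Pohozaev-type / test-function) method combined with the weighted Sobolev embeddings established in Section 2. Write $v = -|x|^\alpha \Delta u$, so that the problem becomes the cooperative system $-\Delta u = |x|^{-\alpha} v$, $-\Delta v = |x|^l u^p$ in $\R^N\setminus\{0\}$, with $u,v \ge 0$. A first point to settle is the sign and the behavior near the origin: since $v$ is (weakly) superharmonic away from $0$ and nonnegative, and $u$ is continuous at $0$, one shows $v > 0$ (unless $u\equiv 0$) and controls the possible singularity of $u,v$ at $0$, so that $0$ is a removable singularity for the relevant integral identities — this uses $N' = N+\alpha > 4$ and $\tau = l-\alpha > -4$ precisely to make the weights locally integrable against test functions.

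\smallskip

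The heart of the argument is a Rellich–Pohozaev identity for (P). Multiplying the equation by $x\cdot\nabla u + \frac{N'-4}{2} u$ (the scaling generator adapted to the weights) and integrating over $B_R\setminus B_\rho$, one collects the bulk term
$$\Big(\frac{N+l}{p+1} - \frac{N'-4}{2}\Big)\int |x|^l u^{p+1}$$
against boundary terms on $\partial B_R$ and $\partial B_\rho$. Under $1<p<p_s$ the coefficient $\frac{N+l}{p+1} - \frac{N'-4}{2}$ is strictly positive, so if the boundary terms can be shown to vanish along suitable sequences $R_j\to\infty$ and $\rho_j\to 0$, we conclude $\int_{\R^N}|x|^l u^{p+1} = 0$, hence $u\equiv 0$. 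To kill the boundary terms one needs a priori decay/growth bounds on $u$, $v$, and their gradients; this is exactly where the second restriction $p < \frac{N+4}{N-4}$ enters. The plan is: first use the equation together with the maximum principle and a Harnack/Moser iteration (in the weighted setting, exploiting the embeddings of Section 2 — e.g.\ \eqref{e:dispau}–\eqref{e:dispax}) to obtain that any nonnegative solution lies in the appropriate weighted Lebesgue space; then bootstrap to pointwise bounds of the form $u(x) \le C|x|^{-\frac{N'-4}{p-1} + \text{(something)}}$ and a matching bound for $v$, with the gradient estimates following from interior elliptic regularity applied on dyadic annuli and rescaling. The exponent arithmetic shows these decay rates make the annular boundary integrals $o(1)$ along a sequence, provided $p$ is below the subcritical threshold $\frac{N+4}{N-4}$ (and below $p_s$ for the bulk coefficient).

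\smallskip

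The main obstacle I expect is obtaining the pointwise (or integral) a priori bounds on $u$ and $v$ near infinity and near the origin that are strong enough to annihilate the boundary terms, \emph{without} a Liouville theorem already in hand — this is the usual chicken-and-egg difficulty of the Pohozaev approach. The honest way around it is a doubling/blow-up argument: if no such bound holds, rescale around a sequence of near-maximum points to extract a limiting entire solution of a limiting (possibly weight-free, if the blow-up points escape the origin, or still weighted, if they concentrate at $0$) equation, and derive a contradiction — either with the classical Liouville theorem for $-\Delta u = u^p$, $p<\frac{N+4}{N-4}$ in $\R^N$ (Gidas–Spruck), or, in the concentration-at-origin case, with the Pohozaev identity itself applied on all of $\R^N$ to the limit. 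The subsidiary technical points — justifying the removability of the singularity at $0$, interchanging limits, and checking that the weighted test functions decay fast enough at the two ends — are routine given the hypotheses in \eqref{1.1} and the embedding results of Section 2, so I would treat them briskly and concentrate the write-up on the Pohozaev computation and the blow-up dichotomy.
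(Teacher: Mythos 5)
Your overall architecture --- reduce to the cooperative system for $(u,v)$ with $v=-|x|^\alpha\Delta u$, derive a priori pointwise decay via a doubling/blow-up argument, then feed those bounds into a Rellich--Pohozaev identity on $B_R$ and let $R\to\infty$ --- is exactly what the paper does (Lemma \ref{la-4.1}, Lemma \ref{t5.1}, and the short proof that follows). The Pohozaev multiplier, the bulk coefficient $\frac{N'+\tau}{p+1}-\frac{N'-4}{2}$, and the mechanism for killing the boundary terms all match the paper's.

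There is, however, a concrete error in the closing step of the blow-up argument. After rescaling at a point $x_0\neq 0$ with radius $R=\tfrac12|x_0|$, the rescaled weights $|y+x_0/R|$ remain pinned in $[1,3]$, so passing to the limit yields a system $-\Delta\tilde U = c\,\tilde V$, $-\Delta\tilde V = c'\,\tilde U^p$ on all of $\R^N$ with constant positive coefficients, i.e.\ the \emph{fourth-order} equation $\Delta^2\tilde U = C\,\tilde U^p$. The Liouville theorem that finishes this step is therefore the biharmonic classification of Lin \cite{Lin}, valid for $1<p<\frac{N+4}{N-4}$ --- not Gidas--Spruck, which is for the second-order scalar equation $-\Delta u=u^p$ and covers only $1<p<\frac{N+2}{N-2}$. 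As written, your reference is to the wrong theorem and the wrong exponent range; taken literally it would fail to close the argument for $\frac{N+2}{N-2}\le p<\frac{N+4}{N-4}$. Substituting Lin's theorem repairs this.

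A smaller point: the ``concentration-at-origin'' branch of your dichotomy never actually arises. Because the rescaling radius is chosen to be half the distance to the origin, the blown-up picture automatically keeps the singularity at unit distance, so the limit is always the unweighted biharmonic equation and no separate Pohozaev argument on a weighted limit is needed. Being prepared for a case that cannot occur is harmless, but the paper's argument is cleaner than your sketch anticipates.
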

Note that $p_s \leq \frac{N+4}{N-4}$ is equal to $\alpha \geq \frac{(N-4)}{4} \tau$ and $p_s>\frac{N+4}{N-4}$ provided $\alpha<\frac{(N-4)}{4} \tau$.  Hence Theorem \ref{t0.1}  implies that
 if $\alpha \geq \frac{(N-4)}{4} \tau$,  the answer is positive  for $1<p<p_s$  while $\alpha<\frac{(N-4)}{4} \tau$, it only gives out the result for   $1<p<\frac{N+4}{N-4}$.  So there is a gap  $\frac{N+4}{N-4} \leq p<p_s$ provided $\alpha<\frac{(N-4)}{4} \tau$. But for the symmetric solutions or $\alpha =2$, we can still get the result in  Theorem \ref{t0.1}.

\begin{thm}
\label{t0.2}
Assume $N \geq 5$ and \eqref{1.1} holds. Assume also that $u \in C^4 (\R^N \backslash \{0\})
\cap C^0 (\R^N)$ with $|x|^\alpha \Delta u \in C^0 (\R^N)$ is a nonnegative radial solution to (P). Then $u \equiv 0$ in $\R^N$ provided $1<p<p_s$.
\end{thm}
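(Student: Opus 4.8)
\emph{Proposed approach.} The plan is to combine an ODE analysis of the radial equation with a Rellich--Pohozaev identity; radial symmetry is exactly what makes the former available. Write $r=|x|$, $u=u(r)$ and $v:=|x|^{\alpha}\Delta u=v(r)$; since $\Delta w=r^{1-N}(r^{N-1}w')'$ for radial $w$, $(P)$ is equivalent to the cooperative system
\[
\bigl(r^{N-1}u'\bigr)'=r^{N-1-\alpha}v,\qquad \bigl(r^{N-1}v'\bigr)'=r^{N-1+l}u^{p}\qquad(r>0).
\]
Because $\alpha<N$ and $l=\tau+\alpha>-N$, the right-hand sides are integrable near $0$, and then $u,v\in C^{0}(\R^{N})$ force $r^{N-1}u'(r)\to0$ and $r^{N-1}v'(r)\to0$ as $r\to0^{+}$. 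Assume, for contradiction, that $u\not\equiv0$; the argument splits into a qualitative step, a sharp decay step, and the Pohozaev identity, the subcriticality $1<p<p_{s}$ entering only in the last.

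\emph{Step 1: sign and decay of $u$.} From $u\ge0$ the map $r\mapsto r^{N-1}v'(r)$ is nondecreasing and vanishes at $0$, so $v$ is nondecreasing. If $v(r_{0})>0$ for some $r_{0}>0$, then $v$ is bounded below by a positive constant for large $r$, so $\Delta u\ge c\,r^{-\alpha}$ there; feeding this back through the two equations repeatedly drives the polynomial growth rate of $u$ at infinity up without bound, which is incompatible with the global solvability of $(P)$ (the standard blow-up argument). Hence $v\le0$ on $(0,\infty)$, so $r^{N-1}u'\le0$, $u$ is nonincreasing, and $u(r)\downarrow L\ge0$. If $L>0$, then $r^{N-1}v'(r)=\int_{0}^{r}s^{N-1+l}u^{p}\,ds\ge c\,r^{N+l}$, i.e. $v'(r)\ge c\,r^{1+l}$; when $l\ge-2$ this makes $v\to+\infty$, contradicting $v\le0$, while when $l<-2$ (so necessarily $\alpha<2$, by $\tau>-4$) one more integration of the first equation gives $u(r)\le C-c\int^{r}s^{3+\tau}\,ds\to-\infty$, again impossible. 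Thus $u(r)\to0$. Finally, away from $0$ the system is an ODE with locally Lipschitz right-hand side (here $p>1$ is used, so that $t\mapsto|t|^{p-1}t$ is $C^{1}$), whence a nontrivial nonnegative solution cannot vanish at any $r_{1}\in(0,\infty)$; therefore $u>0$ on $\R^{N}$.

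\emph{Step 2: sharp asymptotics at infinity (the crux).} Using the integral representations together with the monotonicity of Step~1, I would show $v(r)\to v_{\infty}\le0$ and extract precise polynomial decay rates for $u,u',\Delta u$ and $(\Delta u)'$. A case analysis is unavoidable: the rate forced by the inhomogeneity is $u\sim r^{-\gamma}$ with $\gamma=(\tau+4)/(p-1)$, whereas the decaying radial solutions of $\Delta(|x|^{\alpha}\Delta\,\cdot)=0$ carry the rates $r^{4-N'}$ and $r^{2-N}$ (with $N'=N+\alpha$), and which governs the tail depends on the sign of $\alpha-2$ and on the position of $p$ relative to $(N'+\tau)/(N'-4)$. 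The point of $1<p<p_{s}$ is that in every admissible regime the decay is strong enough that
\[
\int_{\R^{N}}|x|^{\alpha}(\Delta u)^{2}\,dx<\infty,\qquad \int_{\R^{N}}|x|^{l}u^{p+1}\,dx<\infty,
\]
and that all boundary integrals over $\partial B_{R}$ and $\partial B_{\ep}$ appearing in Step~3 vanish as $R\to\infty$ and $\ep\to0^{+}$. I expect this to be the main obstacle: in the slow-decay sub-ranges ($\gamma\le N'-4$) the rate $r^{4-N'}$ is not itself integrable against $|x|^{l}$, so one has to bootstrap the integral representations to gain extra decay, and the proof branches into several parameter cases.

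\emph{Step 3: Rellich--Pohozaev identity.} Multiplying $(P)$ by $u$ and integrating over $B_{R}\setminus B_{\ep}$, two integrations by parts give $\int|x|^{\alpha}(\Delta u)^{2}=\int|x|^{l}u^{p+1}+(\text{boundary terms})$. Multiplying $(P)$ by $x\cdot\nabla u$ and integrating, and using $\Delta(x\cdot\nabla u)=2\Delta u+x\cdot\nabla(\Delta u)$ together with $\nabla\cdot(|x|^{\alpha}x)=N'|x|^{\alpha}$ and $\nabla\cdot(|x|^{l}x)=(N+l)|x|^{l}=(N'+\tau)|x|^{l}$, one obtains $\tfrac{N'-4}{2}\int|x|^{\alpha}(\Delta u)^{2}=\tfrac{N'+\tau}{p+1}\int|x|^{l}u^{p+1}+(\text{boundary terms})$. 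Combining the two identities and letting $\ep\to0^{+}$, $R\to\infty$ (justified by Step~2, which kills all boundary terms),
\[
\Bigl(\frac{N'+\tau}{p+1}-\frac{N'-4}{2}\Bigr)\int_{\R^{N}}|x|^{l}u^{p+1}\,dx=0 .
\]
But $p<p_{s}$ is precisely equivalent to $p+1<\dfrac{2(N'+\tau)}{N'-4}$, hence to $\dfrac{N'+\tau}{p+1}>\dfrac{N'-4}{2}$ (note $N'+\tau>0$ by \eqref{1.1}); the coefficient above is therefore strictly positive, forcing $\int_{\R^{N}}|x|^{l}u^{p+1}\,dx=0$, i.e. $u\equiv0$, contradicting $u>0$. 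This proves Theorem~\ref{t0.2}.
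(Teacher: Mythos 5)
Your overall strategy --- establish the sign of $\Delta u$ and the monotonicity of $u$, extract pointwise decay of $u$, $v$, $u'$, $v'$, then apply the Rellich--Pohozaev identity --- is exactly the paper's, and your Steps~1 and~3 are essentially correct (your iteration argument for the sign of $\Delta u$ is a compressed version of the paper's Lemma~\ref{l4.2}, and your Step~3 reproduces Lemma~\ref{t5.1}). The genuine gap is Step~2: you flag it yourself as ``the crux'' and ``the main obstacle,'' and it is left unexecuted, with an anticipated branching case analysis over slow/fast decay regimes plus a bootstrap.

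The paper obtains the needed decay with no case analysis at all, in essentially two lines. With the paper's sign convention $v=-|x|^{\alpha}\Delta u\ge0$, one first shows $ru'(r)+(N-2)u(r)\ge0$ and $rv'(r)+(N-2)v(r)\ge0$ for all $r>0$: since $\bigl(ru'+(N-2)u\bigr)'=r\Delta u\le0$, if this quantity ever equalled some $M_0<0$ it would stay $\le M_0$, forcing $u'(r)\le M_0/r$ and hence $u\to-\infty$, which is absurd. Second, integrating $-(r^{N-1}u')'=r^{N-\alpha-1}v$ from $0$ to $r$ and using that $v$ is decreasing yields $-ru'(r)\ge\frac{r^{2-\alpha}}{N-\alpha}v(r)$; combined with the first inequality this gives $(N-2)u(r)\ge\frac{r^{2-\alpha}}{N-\alpha}v(r)$, and symmetrically $(N-2)v(r)\ge\frac{r^{2+l}}{N+l}u^p(r)$. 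Chaining the two gives $(N-2)^2u(r)\ge c\,r^{4+\tau}u^p(r)$, hence $u(r)\le Cr^{-(4+\tau)/(p-1)}$ and $v(r)\le Cr^{\alpha-(2(p+1)+\tau)/(p-1)}$ globally, with matching bounds on $u'$ and $v'$ from $0\le -ru'\le(N-2)u$, $0\le -rv'\le(N-2)v$. The competition with the homogeneous rates $r^{4-N'}$, $r^{2-N}$ that you worried about never arises, because these are a priori pointwise upper bounds, not matched asymptotics. Every boundary term in Lemma~\ref{t5.1} is then $O\bigl(R^{[(N'-4)p-(N'+4+2\tau)]/(p-1)}\bigr)\to0$, and this is exactly where $p<p_s$ is used. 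To complete your argument you would need to find this shortcut; the ODE-asymptotics-by-cases route you describe is both substantially harder and, as you concede, not carried out.
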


\begin{thm}
\label{t1.1}
Assume that $N \geq 5$, $\alpha=2$ and $l>-2$. Assume also that $u \in C^4 (\R^N \backslash \{0\})
\cap C^0 (\R^N)$ and $|x|^2 \Delta u \in C^0 (\R^N)$ is a nonnegative solution to (P). Then $u \equiv 0$ in $\R^N$ provided $1<p<p_s=\frac{N+2+2l}{N-2}$.
\end{thm}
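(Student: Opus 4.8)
\emph{Proof idea.} The plan is to reduce (P), for an arbitrary nonnegative solution, to a radial problem by spherical averaging --- this is where the hypothesis $\alpha=2$ is essential --- and then to appeal to (the proof of) Theorem~\ref{t0.2}.

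First I would set $w:=|x|^2\Delta u$ and, for $r>0$, write $\bar f(r)$ for the spherical mean of $f$ over $\partial B_r$. Two preliminary facts. (i) $w(0)=0$: since $u,w\in C^0(\R^N)$, if $w(x)\to\gamma\neq0$ then $\Delta u\sim\gamma|x|^{-2}$ near the origin, and because $\int_{B_1}|x-y|^{2-N}|y|^{-2}\,dy$ diverges logarithmically as $x\to0$, the function $u$ would acquire a logarithmic singularity at $0$, contradicting continuity. (ii) $w$ is subharmonic on all of $\R^N$: by (P), $\Delta w=|x|^l u^p\ge0$ on $\R^N\setminus\{0\}$, and since $w\in C^0$ and $|x|^l u^p\in L^1_{\mathrm{loc}}(\R^N)$ (using $l>-2>-N$), the singularity at the origin is removable. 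Hence $\bar w(r)$ is nondecreasing in $r$ with $\bar w(0^+)=w(0)=0$, so $\bar w\ge0$; and since the spherical mean commutes with $\Delta$ and with the radial weights, $\bar w(r)=r^2\Delta_r\bar u(r)$ where $\Delta_r=\partial_r^2+\tfrac{N-1}{r}\partial_r$. Therefore $\Delta_r\bar u\ge0$: the spherical mean $\bar u$ is radially subharmonic, i.e. nondecreasing with $\bar u(r)\ge u(0)\ge0$. For general $\alpha$ this monotonicity fails, which is why only the smaller exponent of Theorem~\ref{t0.1} is available there.

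Averaging (P) and using Jensen's inequality ($p>1$) gives
\[
\Delta_r\bigl(r^2\Delta_r\bar u\bigr)=r^l\,\overline{u^{\,p}}(r)\ \ge\ r^l\,\bar u(r)^p\qquad(r>0),
\]
so $\bar u$ is a nonnegative, radially nondecreasing supersolution of (P). If $\bar u$ is bounded, then, being subharmonic on $\R^N$, it is a constant $c\ge0$, and the displayed inequality forces $0\ge r^l c^p$, hence $c=0$ and $u\equiv0$. If $\bar u$ is unbounded, then $\bar u(r)\uparrow\infty$ and $\bar u$ is a positive, increasing, radial function with $\Delta(|x|^2\Delta\bar u)\ge|x|^l\bar u^p$ on $\R^N$; it remains to exclude this for $1<p<p_s$. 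I would argue as in the radial case of Theorem~\ref{t0.2} --- for instance by a Kaplan/Keller--Osserman iteration: integrating the inequality four times against the radial weights upgrades a bound $\bar u(r)\ge c_n r^{\delta_n}$ on $[r_0,\infty)$ to $\bar u(r)\ge c_{n+1}r^{\delta_{n+1}}$ with $\delta_{n+1}=p\delta_n+(l+2)$ and $c_{n+1}$ bounded below in terms of $c_n$ and $\delta_n$; starting from a large value $\bar u(r_0)$ (available since $\bar u\to\infty$), one checks that $c_n r^{\delta_n}\to\infty$ at a fixed large $r$, contradicting $\bar u(r)<\infty$. This yields $u\equiv0$.

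I expect the main obstacle to be this last step: carefully tracking the constants $c_n$ and verifying that the radii $r_0$ produced by the repeated integrations do not escape to infinity, so that one genuinely obtains $\bar u\equiv+\infty$ somewhere --- equivalently, carrying out the energy/Pohozaev estimates underlying Theorem~\ref{t0.2}, where the subcriticality $1<p<p_s$ is used. The remaining technical points --- the removable-singularity assertions at the origin and the convergence of the one-dimensional integrals near $r=0$ --- rely on $l>-2$.
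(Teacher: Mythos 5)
Your opening steps --- that $\alpha=2$ together with continuity of $u$ and $w:=|x|^2\Delta u$ at the origin force $w(0)=0$, that $w$ is then subharmonic on all of $\R^N$ with $\overline{w}(0^+)=0$ and hence $\overline{w}\ge0$, and that consequently the spherical mean $\overline{u}$ is a radially nondecreasing supersolution --- are correct, and they are \emph{not} the route the paper takes. The paper proves Theorem~\ref{t1.1} by first establishing in Lemma~\ref{l4.2} that $v=-w>0$ everywhere (an averaging/iteration argument first at $0$, then again after translating the base point), then proving in Proposition~\ref{p4.1} that $|x|^{(N-2)/2}u$ and $|x|^{(N-2)/2}v$ are strictly increasing via a moving-plane argument in Emden--Fowler coordinates, and finally deriving a contradiction from a one-parameter family of barriers of the form $\delta^{-\gamma}R_1^{-\beta}\tilde h\bigl((|x|-3R_1)/(\delta R_1)\bigr)$ together with the strong maximum principle. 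Your reduction to a scalar radial inequality short-circuits almost all of this. (It is also worth pausing on the fact that your observation $v(0)=0$ sits in apparent tension with the paper's Lemma~\ref{l4.2}, which asserts $v(0)>0$; both are statements conditional on the existence of a positive solution, and both end up vacuous once the Liouville theorem is proved.)

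There is, however, a concrete error and a misdirected appeal. The statement ``If $\overline{u}$ is bounded, then, being subharmonic on $\R^N$, it is a constant'' is false for $N\ge3$: bounded subharmonic functions on $\R^N$ need not be constant (a suitable smoothing of $x\mapsto-\min\{1,|x|^{2-N}\}$ is a counterexample), so the bounded case collapses as written. Separately, invoking ``(the proof of) Theorem~\ref{t0.2}'' for the unbounded case cannot work directly: after Jensen's inequality, $\overline{u}$ only satisfies $\Delta_r(r^2\Delta_r\overline{u})\ge r^l\overline{u}^{\,p}$, i.e.\ it is merely a supersolution, whereas the proof of Theorem~\ref{t0.2} rests on the Rellich--Pohozaev identity (Lemma~\ref{t5.1}), which is an \emph{identity for solutions} and is precisely where $p<p_s$ enters; it is not available for a supersolution.

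The good news is that the fix is the very iteration you sketch, and it dissolves the dichotomy. Since $\overline{u}$ is nondecreasing, $u\not\equiv0$ already yields $\overline{u}\ge c_0>0$ on $[R_0,\infty)$ for some $R_0$; then four radial integrations of the supersolution inequality, using $\overline{w}\ge0$, $\overline{w}{}'\ge0$, $\overline{u}{}'\ge0$ and $l+2>0$, upgrade this to $\overline{u}\ge c_nr^{\delta_n}$ with $\delta_{n+1}=p\delta_n+(l+2)$, and the constants and admissible radii can be tracked exactly as the paper does in Step~1 of the proof of Lemma~\ref{l4.2} (the quantities $A$, $b_k$, $\sigma_k$, $r_k$ there), producing a contradiction at a fixed $r^*$. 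This is in effect the paper's own Lemma~\ref{l4.2} iteration, run once, with your new input $w(0)=0$ replacing the paper's contradiction hypothesis $v(0)\le0$. Carried out carefully it would give a noticeably shorter proof --- and one that does not visibly use $p<p_s$, which should make you want to double-check the bookkeeping rather than trust the sketch. As submitted, though, the proposal contains a false lemma and a reduction that does not compile into a proof.
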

\begin{rem}
\label{r4.1}
There is no first order term in (\ref{3.12}) if $\alpha=2$ and we can  get the monotonicity in the radial direction.

\end{rem}

For a bounded domain $\Omega \subset \R^N$ of class $C^2$ with $0 \in \Omega$, we
define $D^{1, \alpha}_0 (\Omega)$ as the completion of $C_c^\infty (\Omega \backslash \{0\})$
with  the  norm
$$\|\phi\|_{1, \alpha}^2=\int_\Omega |x|^\alpha |\nabla \phi|^2 dx$$
and  $D^{2, \alpha}_0 (\Omega)$ as the completion of $C_c^\infty (\Omega \backslash \{0\})$
with the  norm
$$\|\phi\|_{2, \alpha}^2=\int_\Omega |x|^\alpha |\Delta \phi|^2 dx.$$
Also we define $L_l^q (\Omega) \; (q \geq 1)$ the space of functions $\phi$ such that
$$|x|^\frac{l}{q} |\phi| \in L^q (\Omega)$$
with the norm
$$\|\phi\|_{L_l^q (\Omega)}=\Big(\int_\Omega |x|^l |\phi|^q dx \Big)^{\frac{1}{q}}.$$

We now establish the following embeddings:

\begin{prop}
\label{p1.2}
Let $\Omega \subset \R^N$ be a bounded domain of class $C^2$, but not a ball, with $0 \in \Omega$. Assume also that $N \geq 5$ and \eqref{1.1} holds with $\alpha \geq \frac{(N-4)}{4} \tau$. Then, the embedding:
\begin{equation}
\label{1.2}
D^{2, \alpha}_0 (\Omega) \hookrightarrow L_l^{q} (\Omega)
\end{equation}
is continuous for any $q \in [1, p_s+1]$ and this embedding is  compact for any $q \in [1, p_s+1)$.
\end{prop}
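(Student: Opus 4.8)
The plan is to reduce the weighted embedding to a classical unweighted Sobolev embedding via the substitution that linearizes the weight, namely a Kelvin-type (Emden–Fowler) change of variables, and then handle the low-order-term discrepancy coming from the fact that $\Delta(|x|^\alpha\Delta u)$ is not a power of a Laplacian. First I would record the two endpoint inequalities underlying \eqref{1.2}: a weighted Rellich/Hardy–Rellich inequality controlling $\int_\Omega |x|^{\alpha-4}u^2\,dx$ (and, more relevantly, $\int_\Omega|x|^\tau\,|x|^{\alpha-4}\cdots$ type quantities) by $\|u\|_{2,\alpha}^2$, valid precisely because $N'=N+\alpha>4$; and a weighted Sobolev inequality of Caffarelli–Kohn–Nirenberg type giving $\big(\int_\Omega |x|^l u^{q}\big)^{1/q}\le C\|u\|_{2,\alpha}$ for the critical exponent $q=p_s+1$. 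The hypothesis $\alpha\ge\frac{N-4}{4}\tau$ is exactly what makes $p_s+1\le \frac{2N}{N-4}+1$-type bookkeeping consistent, i.e.\ it guarantees that the target exponent $p_s$ does not exceed the pure (unweighted, $l=\alpha=0$) critical exponent $\frac{N+4}{N-4}$; this is the algebraic fact flagged right after Theorem \ref{t0.1}, and it is what allows the argument to close without the weight forcing a worse exponent.

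Concretely, I would pass to the substitution $u(x)=|x|^{-\frac{N'-4}{2}}v(x/|x|,-\ln|x|)$, or equivalently work on the cylinder via $t=-\ln|x|$, $\theta=x/|x|$; under this change $\int_\Omega |x|^\alpha|\Delta u|^2\,dx$ becomes, up to lower-order curvature terms on $S^{N-1}$, a constant-coefficient fourth-order quadratic form $\int(|v_{tt}|^2+\cdots)$, and $\int_\Omega|x|^l u^{q}\,dx$ with $q=p_s+1$ becomes $\int |v|^{q}$ with no weight, precisely when $q=p_s+1$ (this is the defining property of $p_s$). Continuity of \eqref{1.2} for $q=p_s+1$ is then the cylindrical Sobolev embedding; for $q<p_s+1$ it follows by interpolation between $q=p_s+1$ and $q=1$ (or $q=2$, using the weighted Poincaré/Hardy inequality on the bounded set $\Omega$, where $|x|^l$ is integrable against bounded functions because $\tau>-4$ and in particular $l>-N$), together with the fact that $\Omega$ is bounded so lower exponents are dominated. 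For compactness when $q<p_s+1$, I would argue as usual: take a bounded sequence $(u_n)$ in $D^{2,\alpha}_0(\Omega)$, extract a weakly convergent subsequence, split $\Omega$ into a small ball $B_\delta(0)$ and $\Omega\setminus B_\delta(0)$; on $\Omega\setminus B_\delta$ the weights $|x|^\alpha,|x|^l$ are bounded above and below, so standard $H^2\hookrightarrow\hookrightarrow L^q$ compactness applies, while on $B_\delta(0)$ the contribution is made uniformly small by the strict subcriticality $q<p_s+1$ — this is where one uses that the $q=p_s+1$ inequality is continuous and that $\int_{B_\delta}|x|^l u^{p_s+1}$ is controlled by $\delta$-independent constants times a quantity that vanishes as $\delta\to0$ after a concentration-compactness / cutoff estimate. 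A diagonal argument over $\delta\to0$ then gives strong convergence in $L^q_l(\Omega)$.

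The main obstacle, and the place the hypothesis "$\Omega$ is not a ball" must enter, is the passage from the naive quadratic form on the cylinder back to the honest norm $\|\cdot\|_{2,\alpha}$: the Emden–Fowler substitution produces, besides the good fourth-order part, a \emph{first-order} cross term (cf.\ Remark \ref{r4.1}, where the authors note this term is absent exactly when $\alpha=2$). So I would not literally diagonalize the quadratic form; instead I would keep the fourth-order operator as is and prove the needed coercivity/equivalence directly on $\Omega$ by establishing a weighted Hardy–Rellich inequality of the form $\int_\Omega |x|^{\alpha-4}|u|^2\,dx + \int_\Omega |x|^{\alpha-2}|\nabla u|^2\,dx \le C\int_\Omega|x|^\alpha|\Delta u|^2\,dx$ for $u\in C_c^\infty(\Omega\setminus\{0\})$, valid under $N'>4$; the non-ball hypothesis presumably enters to rule out the degenerate case where the sharp constant in such a weighted Rellich inequality is attained (or equivalently to ensure a spectral gap making the quadratic form strictly coercive on the complement of the one-dimensional "radial" directions that cause trouble on a ball). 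Once this coercivity is in hand, the two endpoint inequalities plus interpolation plus the localization argument above deliver continuity on $[1,p_s+1]$ and compactness on $[1,p_s+1)$.
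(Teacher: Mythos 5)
Your proposal takes a genuinely different route from the paper, and it is worth spelling out exactly where the two diverge.

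For the endpoint continuity $D^{2,\alpha}_0(\Omega)\hookrightarrow L^{p_s+1}_l(\Omega)$, the paper does not derive anything: it invokes Theorem~1.1 and Remark~2.2 of Caldiroli--Musina [CM] directly for the inequality $\bigl(\int_\Omega|x|^l|u|^{p_s+1}\bigr)^{2/(p_s+1)}\le C\int_\Omega|x|^\alpha(\Delta u)^2$, and the ``not a ball'' hypothesis is simply inherited from the hypotheses under which [CM] prove that inequality (the Emden--Fowler substitution is used in this paper only for the radial/ball case, Proposition~\ref{p1.4}). Your plan to reprove the endpoint via the cylinder transform is in the right spirit, but the paragraph where you identify the first-order cross term as ``the main obstacle'' is where you go astray. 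Under $u(x)=|x|^{-(N'-4)/2}w(t,\theta)$, $t=-\ln|x|$, one finds $\int_\Omega|x|^\alpha(\Delta u)^2\,dx=\int|w_{tt}+\Delta_{S^{N-1}}w+(\alpha-2)w_t-cw|^2\,dt\,d\theta$ with $c=\tfrac{(N-\alpha)(N'-4)}{4}>0$; expanding and integrating by parts in $t$, every cross term involving $(\alpha-2)w_t$ is a perfect $t$-derivative and vanishes for $w$ coming from $C_c^\infty(\Omega\setminus\{0\})$, leaving $\int|w_{tt}+\Delta_{S^{N-1}}w-cw|^2+(\alpha-2)^2\int w_t^2$. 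Since $c>0$, this form is coercive mode by mode, and that --- together with the arithmetic $p_s+1\le\frac{2N}{N-4}$ exactly when $\alpha\ge\frac{N-4}{4}\tau$ --- is what makes the reduction close. So the cross term is not an obstruction; it contributes a nonnegative piece. Your conjecture that ``not a ball'' serves to avoid attainment of a sharp constant is not what is going on; it is a hypothesis carried over from [CM].

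For compactness, the two arguments differ more sharply, and yours is a legitimate alternative. The paper zero-extends $u_m$, sets $\tilde v_m=|x|^l\tilde u_m$, mollifies to $\tilde v_m^\epsilon$, proves $\{\tilde v_m^\epsilon\}_m$ is Arzel\`a--Ascoli precompact for each fixed $\epsilon$, shows $\tilde v_m^\epsilon\to\tilde v_m$ in $L^1$ uniformly in $m$ (using weighted estimates near the origin and a weighted Hardy inequality $\int|x|^{\alpha-2}|\nabla u|^2\lesssim\int|x|^\alpha(\Delta u)^2$), and extracts an $L_l^1$-convergent subsequence by a diagonal argument, then interpolates to $L^q_l$ for $1<q<p_s+1$. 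You instead split $\Omega=B_\delta\cup(\Omega\setminus B_\delta)$, use unweighted Rellich--Kondrachov away from the origin, and make the $B_\delta$ contribution uniformly small by H\"older against the endpoint bound (this works since $\int_{B_\delta}|x|^l\to0$ because $N+l=N'+\tau>0$). This is a valid and arguably more transparent scheme, but note that the boundedness you need in $W^{2,2}(\Omega\setminus B_\delta)$ does not come for free from $\int_\Omega|x|^\alpha(\Delta u_n)^2\le C$: you still need the Hardy--Rellich and norm-equivalence facts recorded in Remark~\ref{r2.1} to control the full Hessian and lower-order terms (there are no boundary conditions on $\partial B_\delta$), so both arguments lean on the same auxiliary inequalities. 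The interpolation step you propose for intermediate $q$ coincides with the paper's.

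In short: the continuity half of your argument contains a misidentified ``main obstacle'' and a mistaken attribution for the non-ball hypothesis, but once those are corrected the Emden--Fowler route does work as an alternative to citing [CM]; the compactness half is a genuinely different, correct scheme from the paper's mollification argument.
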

Note that $p_s \leq \frac{N+4}{N-4}$ provided $\alpha \geq \frac{(N-4)}{4} \tau$. The related embeddings from $D^{1,\alpha}_0 (\Omega)$ to $L_l^q (\Omega)$ was obtained in \cite{GGW}. Such embeddings may have been known already, but we can not find a suitable reference.

\begin{prop}
\label{p1.3}
Let $\Omega \subset \R^N$ be a bounded domain of class $C^2$, but not a ball, with $0 \in \Omega$. Assume also that $N \geq 5$ and \eqref{1.1} holds with $\alpha<\frac{(N-4)}{4} \tau$. Then, the embedding:
\begin{equation}
\label{1.3}
D^{2, \alpha}_0 (\Omega) \hookrightarrow L_l^{q} (\Omega)
\end{equation}
is continuous for any $q \in [1, \frac{2N}{N-4}]$ and this embedding is  compact for any $q \in [1, \frac{2N}{N-4})$.
\end{prop}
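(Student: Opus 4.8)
The plan is to deduce Proposition~\ref{p1.3} from a single weighted Sobolev inequality of Rellich--Caffarelli--Kohn--Nirenberg type: for all $u\in C_c^\infty(\R^N)$,
\begin{equation*}
\Big(\int_{\R^N}|x|^{\frac{\alpha N}{N-4}}\,|u|^{\frac{2N}{N-4}}\,dx\Big)^{\frac{N-4}{N}}\ \le\ C\int_{\R^N}|x|^{\alpha}\,|\Delta u|^{2}\,dx .
\tag{$\star$}
\end{equation*}
As usual it suffices to prove $\|\phi\|_{L^q_l(\Omega)}\le C\|\phi\|_{2,\alpha}$ for $\phi\in C_c^\infty(\Omega\setminus\{0\})$ with $C$ independent of $\phi$, and then pass to the completion. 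Granting $(\star)$, the key observation is that the hypothesis $\alpha<\frac{N-4}{4}\tau$ is exactly the inequality $l=\alpha+\tau>\frac{\alpha N}{N-4}$, so that, $\Omega$ being bounded (say $\Omega\subset B_R$), $|x|^{l}\le R^{\,l-\frac{\alpha N}{N-4}}|x|^{\frac{\alpha N}{N-4}}$ on $\Omega$. Extending $\phi$ by zero to $\R^N$ and applying $(\star)$ then gives
$$\int_\Omega|x|^{l}|\phi|^{\frac{2N}{N-4}}\,dx\ \le\ C\int_{\R^N}|x|^{\frac{\alpha N}{N-4}}|\phi|^{\frac{2N}{N-4}}\,dx\ \le\ C\Big(\int_{\R^N}|x|^{\alpha}|\Delta\phi|^{2}\Big)^{\frac{N}{N-4}}=C\,\|\phi\|_{2,\alpha}^{\frac{2N}{N-4}},$$
which is the continuous embedding at the endpoint $q=\frac{2N}{N-4}$; for $q\in[1,\frac{2N}{N-4})$ one interpolates by H\"older's inequality against the finite measure $|x|^{l}\,dx$ (finite since $l=\alpha+\tau>-N$ under \eqref{1.1}).

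To prove $(\star)$ I would use the Newtonian potential representation $u=\Phi*(-\Delta u)$ with $\Phi(x)=c_N|x|^{2-N}$, which for $u\in C_c^\infty(\R^N)$ yields the pointwise bound
$$|x|^{\alpha/2}|u(x)|\ \le\ c_N\,|x|^{\alpha/2}\!\int_{\R^N}|x-y|^{-(N-2)}\,|y|^{-\alpha/2}\,\big(|y|^{\alpha/2}|\Delta u(y)|\big)\,dy,$$
and then apply the Stein--Weiss (weighted Hardy--Littlewood--Sobolev) inequality with kernel exponent $\lambda=N-2$, outer and inner weight exponents $-\alpha/2$ and $\alpha/2$, and source exponent $p=2$. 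The Stein--Weiss scaling relation then forces the target exponent to be precisely $q=\frac{2N}{N-4}$, and the remaining hypotheses of Stein--Weiss reduce to $0<\lambda<N$, $2\le q$, $-\alpha/2<\frac{N}{q}=\frac{N-4}{2}$, $\alpha/2<\frac{N}{p'}=\frac{N}{2}$, and ``sum of the two weight exponents $\ge0$'' --- i.e.\ exactly $N\ge5$ together with $4-N<\alpha<N$, that is $4<N'<2N$ in \eqref{1.1}. This gives $\big\||x|^{\alpha/2}u\big\|_{L^{2N/(N-4)}}\le C\big\||x|^{\alpha/2}\Delta u\big\|_{L^2}$, which is $(\star)$.

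For compactness when $q<\frac{2N}{N-4}$: let $(\phi_n)$ be bounded in $D^{2,\alpha}_0(\Omega)$; by the continuous embedding just proved it is bounded in $L^{2N/(N-4)}_l(\Omega)$. On a small ball $B_\delta\subset\subset\Omega$, H\"older's inequality gives $\int_{B_\delta}|x|^{l}|\phi_n|^{q}\,dx\le C\big(\int_{B_\delta}|x|^{l}\,dx\big)^{1-\frac{q(N-4)}{2N}}$ with $C$ independent of $n$, and this tends to $0$ as $\delta\to0$ since $l>-N$. On $\Omega\setminus B_\delta$ the weights $|x|^{\alpha}$ and $|x|^{l}$ are bounded above and below, so $\Delta\phi_n$ and $\phi_n$ are bounded in $L^2(\Omega\setminus B_{\delta/2})$; interior $L^2$ elliptic estimates for the Laplacian then bound $(\phi_n)$ in $W^{2,2}(\Omega\setminus B_\delta)$, and Rellich--Kondrachov yields a subsequence converging in $L^q(\Omega\setminus B_\delta)$, hence in $L^q_l(\Omega\setminus B_\delta)$. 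A diagonal argument over $\delta=1/k$, combined with the uniformly small tail near the origin, produces a subsequence converging in $L^q_l(\Omega)$.

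The only genuinely nontrivial step is $(\star)$; the mild subtlety there is that Stein--Weiss is invoked in the boundary case where the two weight exponents sum to $0$, which is permitted because the source exponent $2$ is strictly below the target exponent $\frac{2N}{N-4}$. It is exactly here --- and in the H\"older interpolation above --- that \emph{boundedness} of $\Omega$ is used, since the scale-invariant inequality $(\star)$ by itself has $\frac{2N}{N-4}$ as critical exponent only for the weight $|x|^{\alpha N/(N-4)}$, not for $|x|^{l}$. I do not see where the hypothesis that $\Omega$ is not a ball enters this argument; it appears to be carried over from the companion first-order embeddings of \cite{GGW}.
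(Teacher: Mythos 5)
Your argument is correct and reaches the same conclusion, but it takes a genuinely different route from the paper's on both the continuity and the compactness parts, so a comparison is worthwhile.

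For continuity, the paper simply reduces Proposition~\ref{p1.3} to Proposition~\ref{p1.2}: since $\alpha<\frac{N-4}{4}\tau$, it picks $\tau_1=\frac{4\alpha}{N-4}$ (so that $\alpha=\frac{N-4}{4}\tau_1$ and $p_s^1+1=\frac{2N}{N-4}$), sets $l_1=\alpha+\tau_1=\frac{\alpha N}{N-4}<l$, and then uses $|x|^{l}\le C|x|^{l_1}$ on the bounded domain $\Omega$. The endpoint inequality invoked inside Proposition~\ref{p1.2} is exactly your $(\star)$ restricted to $\Omega$, but the paper cites it from Caldiroli--Musina \cite{CM} rather than proving it. You instead give a self-contained derivation of $(\star)$ via the Newtonian potential and the Stein--Weiss inequality (endpoint case $\gamma+\beta=0$, $\lambda=N-2$, $p=2<q=\frac{2N}{N-4}$), which is a legitimate and classical argument; after that, your observation that $\alpha<\frac{N-4}{4}\tau$ is equivalent to $l>\frac{\alpha N}{N-4}$ is precisely the monotonicity $l>l_1$ the paper exploits. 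So for continuity the two proofs are structurally the same; you have replaced a citation by a proof.

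For compactness, the two approaches are truly different. The paper's Proposition~\ref{p1.2} runs a mollification/Friedrichs argument on $v_m=|x|^lu_m$ (extended by zero to a large ball), proving precompactness of the mollified sequence by Arzel\`a--Ascoli and then uniform approximation by the mollifications; the key technical estimate there uses the weighted Hardy--Rellich inequality $\int|x|^{\alpha-2}|\nabla u|^2\le C\int|x|^\alpha(\Delta u)^2$ of \cite{GM1,GM2,Mo}. You instead cut off a small ball $B_\delta$ around the singularity: the endpoint embedding makes the contribution of $B_\delta$ uniformly small in $n$, while on $\Omega\setminus B_\delta$ the weights are bounded above and below and the classical Rellich--Kondrachov theorem applies after a $W^{2,2}$ elliptic estimate. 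This is more elementary and avoids Hardy--Rellich. One small caveat in the write-up: the phrase ``interior $L^2$ elliptic estimates'' is a little loose, since $\Omega\setminus B_\delta$ touches $\partial\Omega$; the clean way to say it is to work with $\phi\in C_c^\infty(\Omega\setminus\{0\})$, extend by zero to $\R^N$, and apply the interior estimate on a large annulus $B_{2R}\setminus B_\delta$ (with data on $B_{4R}\setminus B_{\delta/2}$, where $\Omega\subset B_R$), and then pass to the completion; this gives the required uniform $W^{2,2}(\Omega\setminus B_\delta)$ bound. With that clarification your compactness argument is complete.

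Your remark about the hypothesis that $\Omega$ is not a ball is also accurate: nothing in either your proof or the paper's uses it, and it appears to be stated only to set Propositions~\ref{p1.2}--\ref{p1.3} apart from the sharper radial result of Proposition~\ref{p1.4}.
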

Note that $p_s>\frac{N+4}{N-4}$ provided $\alpha<\frac{(N-4)}{4} \tau$. So the above two propositions establish the embedding  theorem for $1<p<\min\{\frac{N+4}{N-4}, p_s\}$. Actually, for radial functions in the ball, we can achieve better embedding theorem.

\begin{prop}
\label{p1.4}
Let $B$ be a ball centered at 0. Assume also that $N \geq 5$ and \eqref{1.1} holds. Then, the embedding:
$$D^{2, \alpha}_{0, \mbox{{\tiny rad}}} (B) \hookrightarrow L_l^{q} (B)$$
is continuous for any $q \in [1, p_s+1]$ and this embedding is  compact for any $q \in [1, p_s+1)$, where $D^{2, \alpha}_{0, \mbox{{\tiny rad}}} (B)=\{\phi \in D_0^{2, \alpha} (B): \;
\phi(x)=\phi (|x|)\}$.
\end{prop}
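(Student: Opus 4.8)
The plan is to exploit radial symmetry to reduce the embedding to one–dimensional weighted estimates, the analytic core being a weighted Hardy inequality on the interval $(0,R)$, where $R$ denotes the radius of $B$. First, by averaging over the rotation group $O(N)$ — an operation that does not increase $\|\cdot\|_{2,\alpha}$, since both $\Delta$ and the weight $|x|^\alpha$ are rotation invariant — radial functions in $C_c^\infty(B\setminus\{0\})$ are dense in $D^{2,\alpha}_{0,\text{rad}}(B)$, so it suffices to prove the bounds for such $\phi$; write $\phi=\phi(r)$, $r=|x|$. Integrating $(\rho^{N-1}\phi')'=\rho^{N-1}\Delta\phi$ twice and using that $\phi$ vanishes near $0$ and near $\partial B$ gives
\[
\phi(r)=-\int_r^R\rho^{1-N}\Big(\int_0^\rho\sigma^{N-1}\Delta\phi(\sigma)\,d\sigma\Big)d\rho .
\]
Putting $h:=|x|^{\alpha/2}\Delta\phi$ (so that $\|h\|_{L^2(B)}=\|\phi\|_{2,\alpha}$), interchanging the order of integration and estimating the inner $\rho$–integral (using $N\ge5>2$), one gets the pointwise estimate
\[
|\phi(r)|\le C\Big(r^{2-N}\!\int_0^r\sigma^{N-1-\alpha/2}|h(\sigma)|\,d\sigma+\int_r^R\sigma^{1-\alpha/2}|h(\sigma)|\,d\sigma\Big),
\]
and, after Cauchy–Schwarz in each term together with $\alpha<N$ and $N'=N+\alpha>4$ from \eqref{1.1}, the crude bound $|\phi(r)|\le C\,r^{(4-N')/2}\|\phi\|_{2,\alpha}$, valid for all $r\in(0,R]$ and, by density, for every $\phi\in D^{2,\alpha}_{0,\text{rad}}(B)$ (with $\phi$ read as its continuous representative off the origin).

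For the \emph{continuity} of the embedding, recall that $p_s+1=\tfrac{N'+4+2\tau}{N'-4}+1=\tfrac{2(N'+\tau)}{N'-4}=\tfrac{2(N+l)}{N'-4}$. When $1\le q<p_s+1$ the crude bound already suffices, since
\[
\int_B|x|^l|\phi|^q\,dx\le C\|\phi\|_{2,\alpha}^q\int_0^R r^{N-1+l+q(4-N')/2}\,dr
\]
and the exponent $N-1+l+q(4-N')/2$ exceeds $-1$ exactly when $q<\tfrac{2(N+l)}{N'-4}=p_s+1$. The endpoint $q=p_s+1$ is the delicate point: there the crude bound yields only a logarithmically divergent integral, so one returns to the two–term pointwise estimate and applies weighted Hardy inequalities to the two operators $h\mapsto r^{2-N}\int_0^r\sigma^{N-1-\alpha/2}h(\sigma)\,d\sigma$ and $h\mapsto\int_r^R\sigma^{1-\alpha/2}h(\sigma)\,d\sigma$, from $L^2\big((0,R),\sigma^{N-1}d\sigma\big)$ into $L^{p_s+1}\big((0,R),r^{N-1+l}dr\big)$. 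Inserting the power weights, the Muckenhoupt-type conditions for these two inequalities both collapse — using $\alpha<N$, $N'>4$ and $l>-N$ (the last from $\tau>-4$, $\alpha>4-N$) — to the single requirement $q\le\tfrac{2(N+l)}{N'-4}=p_s+1$, which is exactly where the sharp exponent originates. This is also where the radial hypothesis is indispensable: for general functions the relevant bound is the Stein–Weiss inequality, which carries an additional restriction, responsible for the smaller ranges in Propositions \ref{p1.2}–\ref{p1.3}. Continuity for the remaining $q\in[1,p_s+1)$ then follows from the endpoint case by Hölder's inequality, the measure $|x|^l\,dx$ being finite on the bounded set $B$ since $l>-N$.

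For \emph{compactness} when $q\in[1,p_s+1)$, take $(\phi_n)$ bounded in $D^{2,\alpha}_{0,\text{rad}}(B)$ and, passing to a subsequence, $\phi_n\rightharpoonup\phi$ weakly; the crude bound then holds uniformly, $|\phi_n(x)|,|\phi(x)|\le C|x|^{(4-N')/2}$. The representation formula also bounds $\phi_n'$ and $\phi_n''$ uniformly on each set $\{\delta\le|x|\le R\}$, so $(\phi_n)$ is precompact in $C^1$ there and hence $\phi_n\to\phi$ uniformly on $\{\delta\le|x|\le R\}$ for every $\delta>0$. Splitting $\int_B|x|^l|\phi_n-\phi|^q\,dx$ at $|x|=\delta$, the inner piece is $\le C\int_0^\delta r^{N-1+l+q(4-N')/2}\,dr$, which tends to $0$ as $\delta\to0$ uniformly in $n$ precisely because $q<p_s+1$, while the outer piece tends to $0$ as $n\to\infty$ for fixed $\delta$; hence $\phi_n\to\phi$ in $L^q_l(B)$.

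The main obstacle is the endpoint $q=p_s+1$: one must make the two–term pointwise representation sharp and verify that the associated weighted Hardy inequalities remain valid up to and including this exponent — equivalently, that passing to radial functions removes the obstruction present in the general Stein–Weiss inequality. Everything else is routine.
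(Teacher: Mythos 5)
Your proof is correct and takes a genuinely different route from the paper.

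The paper proves Proposition \ref{p1.4} via the Emden--Fowler change of variables $u(r)=r^{-(N'-4)/2}w(t)$, $t=-\ln r$. After a direct computation one finds
\[
\int_B |x|^\alpha(\Delta u)^2\,dx=\omega_N\int_{-\ln R}^\infty\bigl(|w''|^2+2\tilde\delta_\alpha|w'|^2+\delta_\alpha|w|^2\bigr)\,dt,
\qquad
\int_B|x|^l|u|^{p_s+1}\,dx=\omega_N\int_{-\ln R}^\infty|w|^{p_s+1}\,dt,
\]
with $\tilde\delta_\alpha,\delta_\alpha>0$ precisely because $4-N<\alpha<N$ (so $(\alpha-2)^2\neq(N-2)^2$). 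The endpoint continuity is then the one-dimensional Sobolev embedding $H^2_0(-\ln R,\infty)\hookrightarrow L^{p_s+1}$, and compactness is referred back to the mollification/interpolation argument of Proposition \ref{p1.2}. By contrast, you avoid the logarithmic substitution entirely: you use the double radial integration of Green's function to get a two-term pointwise representation, extract the crude bound $|\phi(r)|\le Cr^{(4-N')/2}\|\phi\|_{2,\alpha}$ for $q<p_s+1$, and handle the endpoint $q=p_s+1$ via weighted Hardy inequalities for the two operators $h\mapsto r^{2-N}\int_0^r\sigma^{N-1-\alpha/2}h$ and $h\mapsto\int_r^R\sigma^{1-\alpha/2}h$, whose Muckenhoupt--Bradley conditions (after the substitution $g=\sigma^{(N-1)/2}h$) do indeed reduce to exactly $q\le\tfrac{2(N+l)}{N'-4}=p_s+1$, with the single scaling identity closing the estimate. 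Your compactness argument — split $\int_B|x|^l|\phi_n-\phi|^q$ at $|x|=\delta$, use the uniform crude bound to make the inner piece small and local $H^2$-precompactness on $[\delta,R]$ for the outer piece — is also more direct than the paper's diagonal/mollification route. The two proofs buy slightly different things: the paper's change of variables is tidy and turns the endpoint into a textbook fact, but hides the sharp scaling; yours isolates exactly which power counting makes $p_s+1$ the critical exponent and makes transparent why the radial restriction lifts the obstruction present in the non-radial Propositions \ref{p1.2}--\ref{p1.3}.

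Two very small remarks. First, the assertion that the representation formula ``bounds $\phi_n''$ uniformly'' on $\{\delta\le|x|\le R\}$ should be ``uniformly in $L^2$'': from $\phi''=\Delta\phi-\tfrac{N-1}{r}\phi'$ and $\Delta\phi=|x|^{-\alpha/2}h$ one only gets an $L^2$ bound away from the origin, but that already gives a uniform $H^2([\delta,R])$ bound, hence precompactness in $C^1([\delta,R])$, which is all you use. Second, for the crude-bound computation you use Cauchy--Schwarz in the inner $\sigma$-integral; strictly this requires $N-\alpha>0$, which is part of \eqref{1.1}, so it is fine, but is worth flagging. Otherwise the argument is complete.
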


Using the embeddings given in Propositions \ref{p1.2}-\ref{p1.4}, we can obtain positive solutions $u$ of (Q) or positive radial solutions $u$ of (Q) via variational methods for $1<p<p_s$ or $1<p<\frac{N+4}{N-4}$.
The existence of positive radial solutions of the related weighted elliptic equations for second order was obtained in \cite{Ni}.
The boot-strap argument implies that the obtained solutions $u$ of (Q) satisfy $u \in C^4 (\Omega \backslash \{0\})$ or $u \in C^4 (B \backslash \{0\})$. We will see from the theorems below that the singular point 0 of $u$ is removable, i.e., $u \in C^4 (\Omega \backslash \{0\})
\cap C^0 ({\overline \Omega})$ and $|x|^\alpha \Delta u \in C^0 ({\overline \Omega})$ or $u \in C^4 (B \backslash \{0\}) \cap C^0 ({\overline B})$ and $|x|^\alpha \Delta u \in C^0 ({\overline B})$.

We know that $p_s>\frac{N+4}{N-4}$ provided $\alpha<\frac{N-4}{4} \tau$. The embedding in
Proposition \ref{p1.3} implies that we can only obtain the existence and regularity of positive solutions of (Q) for $1<p<\frac{N+4}{N-4}$. The following problem is still open.

{\it {\bf Open problem 2.} Assume that $\Omega$ is a bounded smooth domain in $\R^N \; (N \geq 5)$ with $0 \in \Omega$ but  not a ball. Assume also that \eqref{1.1} holds with $\alpha<\frac{N-4}{4} \tau$.

Does (Q) admit a positive solution $u \in C^4 (\Omega \backslash \{0\}) \cap C^0 ({\overline \Omega})$ and $|x|^\alpha \Delta u \in C^0({\overline \Omega})$ provided $\frac{N+4}{N-4} \leq p< p_s$?}

In this paper, we use $C$ to denote a universal positive constant, which may be changed from one line to another line.

\section{Embeddings of weighted Sobolev spaces and proof of Propositions \ref{p1.2}-\ref{p1.4}}
\setcounter{equation}{0}

In this section, we present the proof of Propositions \ref{p1.2}-\ref{p1.4}.

{\bf Proof of Proposition \ref{p1.2}}

Since $\alpha \geq \frac{N-4}{4} \tau$, we see that $p_s \leq \frac{N+4}{N-4}$.
It is known from Theorem 1.1 and Remark 2.2 of \cite{CM} that the embedding:
\begin{equation}
\label{2.1}
D^{2, \alpha}_0 (\Omega) \hookrightarrow L_l^{p_s+1} (\Omega)
\end{equation}
is continuous. That is, there exists a constant $C>0$ independent of $u$ such that
\begin{equation}
\label{2.2}
\Big(\int_\Omega |x|^l |u|^{p_s+1} dx \Big)^{\frac{2}{p_s+1}} \leq C \int_\Omega |x|^\alpha (\Delta u)^2 dx.
\end{equation}
This inequality is corresponding to  Caffarelli-Kohn-Nirenberg inequality for the second order operator (see \cite{CKN, CW}). Note that when $4-N<\alpha<N$,
$$-\gamma_\alpha:=\Big(\frac{\alpha-2}{2}\Big)^2-\Big(\frac{N-2}{2} \Big)^2 \not \in \Lambda (S^{N-1}),$$
where $\Lambda (S^{N-1})$ is the Dirichlet spectrum of the Laplace-Beltrami operator on $S^{N-1}$.

We now show that the embedding in \eqref{1.2} is continuous for any $q \in [1, p_s+1)$. To see this, we just notice that, for $u \in D^{2, \alpha}_0 (\Omega)$ and $q \in [1, p_s+1)$,
\begin{eqnarray}
\label{2.3}
\Big(\int_\Omega |x|^l |u|^q dx \Big)^{\frac{1}{q}} & \leq & \Big(\int_\Omega |x|^l |u|^{p_s+1} dx \Big)^{\frac{1}{p_s+1}} \Big(\int_\Omega |x|^l dx \Big)^{\frac{1}{q}-\frac{1}{p_s+1}} \nonumber \\
&\leq& C \Big(\int_\Omega |x|^\alpha (\Delta u)^2 dx \Big)^{\frac{1}{2}} \Big(\int_\Omega |x|^l dx \Big)^{\frac{1}{q}-\frac{1}{p_s+1}} \;\;\;\;\; (\mbox{by \eqref{2.2}}) \nonumber \\
&\leq& C \Big(\int_\Omega |x|^\alpha (\Delta u)^2 dx \Big)^{\frac{1}{2}}.
\end{eqnarray}
This implies that the embedding is continuous. Note that $\int_\Omega |x|^l dx \leq C$ provided
$N+l=N'+\tau>0$.

To prove the embedding is compact, we divide the proof into 2 steps.

{\it Step 1.} We show that if $\{u_m\}_{m=1}^\infty$ is a bounded sequence in $D_0^{2, \alpha} (\Omega)$, there exists a subsequence (still denoted by $\{u_m\}_{m=1}^\infty$) such that $u_m$ converges in $L_l^1 (\Omega)$.

Suppose that $\{u_m\}_{m=1}^\infty$ is a bounded sequence in $D_0^{2, \alpha} (\Omega)$, then there exists $C>0$ independent of $m$ such that $\|u_m\|_{D_0^{2, \alpha} (\Omega)} \leq C$. Setting $v_m=|x|^l u_m$, we see from \eqref{2.3} that $v_m \in L^1 (\Omega)$ and $\|v_m\|_{L^1(\Omega)} \leq C$, where $C>0$ is independent of $m$. Let $B$ be a large ball such that $\Omega \subset \subset B$. If we denote
$${\tilde u}_m (x)=\left \{ \begin{array}{ll} u_m (x), \;\;\; & x \in \Omega, \\
0, \;\;\; & x \in B \backslash \Omega \end{array} \right.$$
and ${\tilde v}_m (x)=|x|^l {\tilde u}_m (x)$, we see that ${\tilde v}_m \in L^1 (B)$ and $\|{\tilde v}_m \|_{L^1 (B)} \leq C$, where $C>0$ is independent of $m$. Let
$${\tilde v}_m^\epsilon (x)=\epsilon^{-N} \int_B \rho \Big(\frac{x-y}{\epsilon} \Big) {\tilde v}_m (y) d y,$$
where $0<\epsilon<\delta:=$ dist$(\partial \Omega, \partial B)$ and $\rho \in C^\infty (\R^N)$, $\rho (x)=0$ for $|x| \geq 1$, $\int_{\R^N} \rho (x) dx=1$. Obviously, $\{{\tilde v}_m^\epsilon\}_{m=1}^\infty$ all have their supports in $B$. We now claim that for each fixed $\epsilon>0$, the sequence $\{{\tilde v}_m^\epsilon\}_{m=1}^\infty$ is uniformly bounded and equi-continuous. Indeed, we see that, for $x \in B$,
$$|{\tilde v}_m^\epsilon (x)| \leq \sup |\rho| \epsilon^{-N} \|{\tilde v}_m\|_{L^1 (B)} \leq C \epsilon^{-N},$$
$$|\nabla {\tilde v}_m^\epsilon (x)| \leq \sup |\nabla \rho| \epsilon^{-N-1} \|{\tilde v}_m \|_{L^1 (B)} \leq C \epsilon^{-N-1},$$
where $C>0$ is independent of $m$. It follows from Arzela-Ascoli theorem that $\{{\tilde v}_m^\epsilon\}_{m=1}^\infty$ is precompact in $C^0 ({\overline B})$. Therefore, it is precompact in $L^1 (B)$. We next claim that
\begin{equation}
\label{2.4}
{\tilde v}_m^\epsilon \to {\tilde v}_m \;\;\; \mbox{in $L^1 (B)$ as $\epsilon \to 0$ uniformly in $m$}.
\end{equation}
We see
\begin{eqnarray}
\label{2.5}
\|{\tilde v}_m^\epsilon-{\tilde v}_m\|_{L^1 (B)} &=& \|{\tilde v}_m^\epsilon-{\tilde v}_m\|_{L^1 (B_{2 \epsilon})}+\|{\tilde v}_m^\epsilon-{\tilde v}_m\|_{L^1 (B \backslash B_{2 \epsilon})}
\nonumber \\
&\leq& 2 \|{\tilde v}_m \|_{L^1 (B_{3 \epsilon})}
+\|{\tilde v}_m^\epsilon-{\tilde v}_m\|_{L^1 (B \backslash B_{2 \epsilon})} \nonumber \\
&\leq& C \epsilon^\eta+\|{\tilde v}_m^\epsilon-{\tilde v}_m\|_{L^1 (B \backslash B_{2 \epsilon})}
\end{eqnarray}
where $B_{2 \epsilon}=\{x: \; |x|<2 \epsilon\}$, $\eta=\Big[1-\frac{1}{p_s+1} \Big] (N'+\tau)>0$. Note that, for $q \in [1, p_s+1)$ and $v \in D_0^{2, \alpha} (\Omega)$,
\begin{eqnarray*}
\Big(\int_{B_{3 \epsilon}} |x|^l |v|^q dx \Big)^{\frac{1}{q}} &\leq&
\Big(\int_{B_{3 \epsilon}} |x|^l |v|^{p_s+1} dx \Big)^{\frac{1}{p_s+1}} \Big(\int_{B_{3 \epsilon}} |x|^l dx \Big)^{\frac{1}{q}-\frac{1}{p_s+1}} \\
&\leq& C \Big(\int_\Omega |x|^\alpha (\Delta v)^2 dx \Big)^{\frac{1}{2}} \epsilon^{(\frac{1}{q}-\frac{1}{p_s+1}) [N'+\tau]}.
\end{eqnarray*}
We also see that
\begin{eqnarray*}
& & \int_{B \backslash B_{2 \epsilon}} |{\tilde v}_m^\epsilon (x)-{\tilde v}_m (x)| dx \\
 \leq & &\int_{|z| \leq 1} \rho (z) \int_0^{1} \int_{B \backslash B_{2 \epsilon}} \Big| \nabla {\tilde v}_m (x-\epsilon t z)\cdot \epsilon z  \Big| dx dt dz\\
\leq & &\epsilon \int_{B \backslash B_\epsilon} |\nabla {\tilde v}_m (z)| dz\\
\leq & & C \epsilon \int_{B \backslash B_\epsilon} |x|^{l-1} |{\tilde u}_m (x) dx
+\epsilon \int_{B \backslash B_\epsilon} |x|^l |\nabla {\tilde u}_m (x)| dx\\
\leq & & C \epsilon  \Big(\int_{B \backslash B_\epsilon} |x|^l |{\tilde u}_m|^{p_s+1} dx \Big)^{\frac{1}{p_s+1}} \Big( \int_{B \backslash B_\epsilon} |x|^{l-\frac{2(N'+\tau)}{N'+4+2 \tau}} \Big)^{\frac{N'+4+2 \tau}{2(N'+\tau)}}\\
& &+\epsilon \Big(\int_{B \backslash B_\epsilon} |x|^{\alpha-2} |\nabla {\tilde u}_m (x)|^2 dx \Big)^{\frac{1}{2}} \Big(\int_{B \backslash B_\epsilon} |x|^{l+2+\tau} dx \Big)^{\frac{1}{2}}\\
\leq& & C \epsilon \Big(\int_B |x|^\alpha (\Delta {\tilde u}_m (x))^2
dx \Big)^{\frac{1}{2}} \Big( \int_{B \backslash B_\epsilon} |x|^{l-\frac{2(N'+\tau)}{N'+4+2 \tau}} \Big)^{\frac{N'+4+2 \tau}{2(N'+\tau)}}\\
& &+\epsilon \Big(\int_B |x|^\alpha (\Delta {\tilde u}_m (x))^2
dx \Big)^{\frac{1}{2}} \Big(\int_{B \backslash B_\epsilon} |x|^{l+2+\tau} dx \Big)^{\frac{1}{2}}\\
 \leq & & C \epsilon \|{\tilde u}_m \|_{D_0^{2, \alpha} (B)}
\Big[ \Big(\int_{B \backslash B_\epsilon} |x|^{l-\frac{2(N'+\tau)}{N'+4+2 \tau}} dx \Big)^{\frac{N'+4+2 \tau}{2(N'+\tau)}}+\Big(\int_{B \backslash B_\epsilon} |x|^{l+2+\tau} dx \Big)^{\frac{1}{2}} \Big],
\end{eqnarray*}
where we have used that the fact (see \cite{GM1, GM2, Mo})
\begin{equation}
\label{2.5}
\int_B |x|^{\alpha-2} |\nabla {\tilde u}_m |^2 dx \leq C \int_B |x|^\alpha (\Delta {\tilde u}_m )^2 dx,
\end{equation}
where $C>0$ is independent of $m$.
We consider two cases here: (i) $N'+2 \tau \geq 0$, (ii) $N'+2 \tau<0$.

For the case (i), we have that
$$\int_{B \backslash B_\epsilon} |x|^{l-\frac{2(N'+\tau)}{N'+4+2 \tau}}dx  \leq C,$$
$$\int_{B \backslash B_\epsilon} |x|^{l+2+\tau} dx \leq C,$$
where $C>0$ is independent of $\epsilon$, and hence
$$\int_{B \backslash B_{2 \epsilon}} |{\tilde v}_m^\epsilon (x)-{\tilde v}_m (x)| dx  \leq C \epsilon.$$

For the case (ii), we have that
$$ \epsilon \Big( \int_{B \backslash B_\epsilon} |x|^{l-\frac{2(N'+\tau)}{N'+4+2 \tau}} dx \Big)^{\frac{N'+4+2 \tau}{2(N'+\tau)}} \leq C \epsilon^{\frac{N'+2 \tau+4}{2}},$$
$$\epsilon \Big(\int_{B \backslash B_\epsilon} |x|^{l+2+\tau} dx \Big)^{\frac{1}{2}}
\leq C \epsilon^{\frac{N'+2 \tau+4}{2}},$$
where $C>0$ is independent of $\epsilon$, and hence
$$\int_{B \backslash B_{2 \epsilon}} |{\tilde v}_m^\epsilon (x)-{\tilde v}_m (x)| dx  \leq 2 C \epsilon^{\eta_2}\|{\tilde u}_\epsilon \|_{D_0^{2, \alpha} (B)},$$
where
$$\eta_2:=\frac{N'+2 \tau+4}{2}>0.$$
Therefore, our claim \eqref{2.4} holds.

Now fix $\sigma>0$, we claim that there exists a subsequence $\{{\tilde v}_{m_i} \}_{i=1}^\infty$
of $\{{\tilde v}_m \}_{m=1}^\infty$ such that
\begin{equation}
\label{2.6}
\lim_{i,j \to \infty} \sup \|{\tilde v}_{m_i}-{\tilde v}_{m_j} \|_{L^1 (B)} \leq \sigma.
\end{equation}
To show this, we choose $\epsilon>0$ so small that
$$\|{\tilde v}_m^\epsilon-{\tilde v}_m \|_{L^1 (B)} \leq \frac{\sigma}{2}, \;\;\;
\mbox{for $m=1,2, \ldots$.}$$
Since $\{{\tilde v}_m^\epsilon \}_{m=1}^\infty$ is precompact in $L^1 (B)$, there exists a subsequence $\{{\tilde v}_{m_i}^\epsilon\}_{i=1}^\infty$ of $\{{\tilde v}_m^\epsilon \}_{m=1}^\infty$ that converges uniformly in $L^1 (B)$. Therefore, $\lim_{i,j \to \infty} \sup \|{\tilde v}_{m_i}^\epsilon-{\tilde v}_{m_j}^\epsilon \|_{L^1 (B)}=0$. Then
\begin{eqnarray*}
& & \int_B |{\tilde v}_{m_i}-{\tilde v}_{m_j}| dx \\
& &\;\;\;\;\;\; \leq  \int_B |{\tilde v}_{m_i}-{\tilde v}_{m_i}^\epsilon| dx +\int_B |{\tilde v}_{m_i}^\epsilon-{\tilde v}_{m_j}^\epsilon| dx +\int_B |{\tilde v}_{m_j}^\epsilon-{\tilde v}_{m_j}| dx\\
& &\;\;\;\;\;\; \leq \frac{\sigma}{2}+\frac{\sigma}{2} \;\;\; \mbox{as $i, j \to \infty$}.
\end{eqnarray*}
The claim \eqref{2.6} holds. We next employ assertion \eqref{2.6} with $\sigma=1, \frac{1}{2}, \ldots, \frac{1}{i}, \ldots$ and a standard diagonal argument to extract a subsequence $\{{\tilde v}_i \}_{i=1}^\infty$ of $\{{\tilde v}_m \}_{m=1}^\infty$ satisfying
$$\lim_{i,j \to \infty} \sup \|{\tilde v}_i-{\tilde v}_j \|_{L^1 (B)}=0$$
and hence ${\tilde v}_i$ converges to ${\tilde v} \in L^1 (B)$ as $i \to \infty$, i.e.,
\begin{equation}
\label{2.7}
\int_B ||x|^l {\tilde u}_i-{\tilde v}| dx \to 0 \;\; \mbox{as $i \to \infty$}.
\end{equation}

Let ${\tilde u}=|x|^{-l} {\tilde v}$. We easily see from \eqref{2.7} that ${\tilde u} \in L_l^1 (B)$ and
\begin{equation}
\label{2.8}
\int_B |x|^l |{\tilde u}_i-{\tilde u}| dx \to 0 \;\;\mbox{as $i \to \infty$}.
\end{equation}
This also implies that $u_i$ converges to $u$ in $L_l^1 (\Omega)$ and the proof of step 1 is complete.

{\it Step 2.} We show that if $\{u_m\}_{m=1}^\infty$ is a bounded sequence in $D_0^{2, \alpha} (\Omega)$, there exists a subsequence (still denoted by $\{u_m\}_{m=1}^\infty$) such that $u_m$ converges in $L^q_l (\Omega)$ for $q \in (1, p_s+1)$.

We see that for $q \in (1, p_s+1)$, there exists $\lambda \in (0,1)$ such that $\frac{1}{q}=\lambda+\frac{1-\lambda}{p_s+1}$. Then for any $u_m \in D_0^{2, \alpha} (\Omega)$,
\begin{eqnarray*}
\Big(\int_\Omega |x|^l |u_m|^q dx \Big)^{\frac{1}{q}} &\leq& \Big(\int_\Omega |x|^l |u_m| dx \Big)^\lambda \Big(\int_\Omega |x|^l |u_m|^{p_s+1} dx \Big)^{\frac{1-\lambda}{p_s+1}} \\
&\leq& C \Big(\int_\Omega |x|^l |u_m| dx \Big)^\lambda \Big(\int_\Omega |x|^\alpha (\Delta u_m)^2 dx \Big)^{\frac{1-\lambda}{2}}\\
&\leq&C \Big(\int_\Omega |x|^l |u_m| dx \Big)^\lambda.
\end{eqnarray*}
It follows from the proof of step 1 that there is a subsequence $\{u_i\}_{i=1}^\infty$ of $\{u_m\}_{m=1}^\infty$ such that $\|u_i-u_j\|_{L_l^1 (\Omega)} \to 0$ as $i, j \to \infty$.
Then
\begin{equation}
\label{2.9}
\Big(\int_\Omega ||x|^{\frac{l}{q}} u_i-|x|^{\frac{l}{q}} u_j|^q dx \Big)^{\frac{1}{q}} \leq
C \Big(\int_\Omega |x|^l |u_i-u_j| dx \Big)^\lambda \to 0 \;\;
\mbox{as $i,j \to \infty$}.
\end{equation}
There exists $v \in L^q (\Omega)$ such that
\begin{equation}
\label{2.10}
\int_\Omega ||x|^{\frac{l}{q}} u_i-v|^q dx \to 0 \;\; \mbox{as $i \to \infty$}.
\end{equation}
Set $u=|x|^{-\frac{l}{q}} v$. We see from \eqref{2.10} that $u \in L_l^q (\Omega)$ and $u_i$ converges to $u$ in $L^q_l (\Omega)$ as $i \to \infty$. This completes the proof of step 2 and the proof of this proposition. \qed

{\bf Proof of Proposition \ref{p1.3}}

Since $\alpha<\frac{N-4}{4} \tau$, we can choose $l_1$ such that $\tau_1:=l_1-\alpha>-4$,
$\frac{N-4}{4} \tau_1=\alpha$. This implies that $l_1<l$ and $p_s^1:=\frac{N'+4+2 \tau_1}{N'-4}=\frac{N+4}{N-4}$.
It follows from
Proposition \ref{p1.2} that the embedding:
\begin{equation}
\label{2.11}
D^{2, \alpha}_0 (\Omega) \hookrightarrow L_{l_1}^{q} (\Omega)
\end{equation}
is continuous for any $q \in [1, p_s^1+1] \; (:=[1, \frac{2N}{N-4}])$ and this embedding is continuous and compact for any $q \in [1, \frac{2N}{N-4})$. Moreover,
$$\int_\Omega |x|^l |u|^q dx=\int_\Omega |x|^{l_1} |x|^{l-l_1} |u|^q dx
\leq C \int_\Omega |x|^{l_1} |u|^q dx \;\; (\mbox{note that $l_1<l$}).$$
This implies that
\begin{equation}
\label{2.12}
D^{2, \alpha}_0 (\Omega) \hookrightarrow L_l^{q} (\Omega)
\end{equation}
for any $q \in [1, \frac{2N}{N-4}]$ is continuous and compact for any $q \in [1, \frac{2N}{N-4})$. \qed

{\bf Proof of Proposition \ref{p1.4}}

The conclusions of the special case of $\alpha=0$ and $l>0$ have been obtained in Theorem 1.1 and Corollary 1.2 of \cite{FSM}. We prove this proposition by different arguments.

For any $u \in C_c^2(B \backslash \{0\})$ and $u(x)=u(r)$ with $r=|x|$, we
make the transformation:
$$u(r)=r^{-\frac{N'-4}{2}} w(t), \;\;\; t=-\ln r.$$
Then, we have $w \in H^2_0 (-\ln R, \infty)$ and
\begin{equation}
\label{2.13}
\int_B |x|^\alpha (\Delta u)^2 dx=\omega_N \int_{-\ln R}^\infty  \Big(|w''|^2+2 {\tilde \delta}_\alpha |w'|^2+\delta_\alpha |w|^2 \Big) dt
\end{equation}
where $\omega_N=|S^{N-1}|$,
$${\tilde \delta}_\alpha=\Big(\frac{N-2}{2} \Big)^2+\Big(\frac{\alpha-2}{2} \Big)^2, \;\;\;
\delta_\alpha=\Big[\Big(\frac{N-2}{2} \Big)^2-\Big(\frac{\alpha-2}{2} \Big)^2 \Big]^2.$$
We see that $\delta_a \neq 0$ provided $4-N<\alpha<N$ and $N \geq 5$. Moreover,
\begin{equation}
\label{2.14}
\int_B |x|^l |u|^{p_s+1} dx=\omega_N \int_{-\ln R}^\infty |w|^{p_s+1} dt.
\end{equation}
Since $H^2_0 (-\ln R, \infty) \hookrightarrow L^{p_s+1} (-\ln R, \infty)$ by the Sobolev embedding theorem, we obtain from \eqref{2.13} and \eqref{2.14} that the embedding:
$$D^{2, \alpha}_{0, \mbox{{\tiny rad}}} (B) \hookrightarrow L_l^{p_s+1} (B)$$
is continuous. The embedding:
$$D^{2, \alpha}_{0, \mbox{{\tiny rad}}} (B) \hookrightarrow L_l^{q} (B)$$
is continuous and compact for any $q \in [1, p_s+1)$ can be obtained from the interpolations arguments similar to those in the proof of Proposition \ref{p1.2}. \qed

We now obtain the existence of positive solutions of (Q). To simplify notation, $H_\alpha (\Omega)$ stands for $W^{2,2}_\alpha (\Omega) \cap D_0^{1, \alpha} (\Omega)$, where $W^{2,2}_\alpha (\Omega)$
is the completion of $C^\infty (\Omega \backslash \{0\})$
with respect to the norm
$$\|\phi\|_{W^{2,2}_\alpha (\Omega)}^2=\int_\Omega |x|^\alpha (|u|^2+|\nabla u|^2+|D^2 u|^2) dx.$$
We see that the norm
$[\int_\Omega |x|^\alpha (|u|^2+|\nabla u|^2+|D^2 u|^2) dx ]^{1/2}$ of $W^{2,2}_\alpha (\Omega)$
is equivalent to $(\int_\Omega |x|^\alpha (\Delta u)^2 dx)^{1/2}$ on $H_\alpha (\Omega)$. We consider
$H_\alpha (\Omega)$ endowed with the norm
$$\|u\|_{H_\alpha (\Omega)}=\Big(\int_\Omega |x|^\alpha (\Delta u)^2 dx \Big)^{\frac{1}{2}}, \;\;\; u \in H_\alpha (\Omega).$$

\begin{rem}
\label{r2.1}
It is known from Remarks 2.2 and 2.3 of \cite{CM} that the functions
$$u \mapsto \max_{i,j=1, \ldots, N} \int_\Omega |x|^\alpha |\partial_{i,j} u|^2 dx,
\;\;\; u \mapsto  \int_\Omega |x|^\alpha (\Delta u)^2 dx$$
define two equivalent norms in $C_c^\infty (\Omega \backslash \{0\})$ provided $4-N<\alpha<N$.
Moreover, we see from \cite{GM1, GM2, Mo} that
$$\int_\Omega |x|^\alpha |\nabla u|^2 dx \leq C \int_\Omega |x|^{\alpha-2} |\nabla u|^2 dx
\leq C \int_\Omega |x|^\alpha (\Delta u)^2 dx$$
for any $u \in D^{2, \alpha}_0 (\Omega)$. Note that $|x|^\alpha \leq C |x|^{\alpha-2}$
for $x \in \Omega$.
\end{rem}

\begin{defn}
\label{d2.1}
We say that $u \in H_\alpha (\Omega)$ is a weak solution of
\begin{equation}
\label{2.15}
\left \{ \begin{array}{ll} \Delta (|x|^\alpha \Delta u)=|x|^l |u|^{p-1} u \;\; & \mbox{in $\Omega$},\\
 u=\Delta u=0, \;\; &\mbox{on $\partial \Omega$},
 \end{array} \right.
 \end{equation}
if $u$ is a critical point of the $C^1 (H_\alpha (\Omega), \R)$ functional
$$J(u)=\frac{1}{2} \int_\Omega |x|^\alpha (\Delta u)^2 dx-\frac{1}{p+1} \int_\Omega |x|^l |u|^{p+1} dx, \;\;\; u \in H_\alpha (\Omega),$$
that is, $u \in H_\alpha (\Omega)$ and satisfies
$$ \int_\Omega |x|^\alpha \Delta u \Delta v dx=\int_\Omega |x|^l |u|^{p-1}u v dx, \;\;\;
\forall v \in H_\alpha (\Omega).$$
\end{defn}

Now, we set
$$m_{\alpha, l}=\inf_{u \in H_\alpha (\Omega), u \neq 0} \frac{\int_\Omega |x|^\alpha (\Delta u)^2 dx}{(\int_\Omega |x|^l |u|^{p+1} dx)^{2/(p+1)}}.$$
We see that under the assumptions of Proposition \ref{p1.2}, $m_{\alpha,l}$ is attained by a function $u \in H_\alpha (\Omega)$ provided $1<p<p_s$, since the embedding: $H_\alpha (\Omega)
\hookrightarrow L^{q}_l (\Omega)$ is continuous and compact for any $q \in [1, p_s+1)$. In addition, by standard arguments, a suitable multiple of $u$ turns to be a weak solution, as defined above, of \eqref{2.15}.

\begin{thm}
\label{t2.1}
Let $\Omega \subset \R^N$ be a bounded smooth domain, but not a ball, with $0 \in \Omega$. Assume also that $N \geq 5$ and \eqref{1.1} holds with $\alpha \geq \frac{(N-4)}{4} \tau$. Let $u \in H_\alpha (\Omega)$, $u \not \equiv 0$ be a minimizer of $m_{\alpha,l}$ for $1<p<p_s$.
Then $u \in C^4 (\Omega \backslash \{0\}) \cap C^0 ({\overline \Omega})$, $|x|^\alpha \Delta u \in C^0 ({\overline \Omega})$ and $u>0, -|x|^\alpha \Delta u>0$
in $\Omega$. Therefore, (Q) admits a positive solution $u$.
\end{thm}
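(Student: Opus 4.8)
The plan is to upgrade the variational minimizer $u$ of $m_{\alpha,l}$ to a classical, positive, singularity-removable solution in three stages: first positivity and sign of $|x|^\alpha\Delta u$ via the maximum principle applied to the decoupled second-order system; then interior regularity away from the origin by elliptic bootstrap; and finally the removability of the origin by a careful analysis near $0$ using the weighted structure and the subcritical exponent. First I would observe that, after replacing $u$ by a suitable positive scalar multiple, $u$ is a weak solution of \eqref{2.15} in the sense of Definition \ref{d2.1}; replacing $u$ by $|u|$ does not increase the Rayleigh quotient (since $|\Delta|u||\le|\Delta u|$ in the distributional sense for $H_\alpha$ functions, or more simply since $u$ and $|u|$ have the same $L^{p+1}_l$ norm while the numerator can only decrease), so we may assume $u\ge0$, $u\not\equiv0$. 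Writing $v=-|x|^\alpha\Delta u$, the equation becomes the cooperative system $-\Delta v=|x|^l u^p\ge0$ in $\Omega\setminus\{0\}$ with $v=0$ on $\partial\Omega$, and $-\Delta u=|x|^{-\alpha}v$ with $u=0$ on $\partial\Omega$; since $|x|^l u^p\in L^1$ and is nonnegative, $v\ge0$ in $\Omega$ (a weak maximum principle argument, using that $0$ is a removable singularity for the nonnegative superharmonic-type inequality when $N\ge3$), and then $-\Delta u=|x|^{-\alpha}v\ge0$ gives $u\ge0$; the strong maximum principle on $\Omega\setminus\{0\}$, together with $u\not\equiv0$, upgrades both to $u>0$ and $v>0$ in $\Omega\setminus\{0\}$, hence in $\Omega$.

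Next I would establish $u\in C^4(\Omega\setminus\{0\})$. Away from the origin the weight $|x|^\alpha$ is smooth and bounded above and below, so \eqref{2.15} is a uniformly elliptic fourth-order equation (equivalently the two second-order equations $-\Delta v=|x|^l u^p$, $-\Delta u=|x|^{-\alpha}v$) with a nonlinearity that is locally bounded once we know $u\in L^\infty_{loc}(\Omega\setminus\{0\})$. The embedding in Proposition \ref{p1.2} (with $1<p<p_s$) places $u$ in $L^{p+1}_l(\Omega)$; a standard De Giorgi--Moser / Brezis--Kato iteration applied to the decoupled system, exploiting the subcriticality $p<p_s$, bootstraps $u$ into $L^\infty_{loc}$ and then into $C^{4,\gamma}_{loc}$ of $\Omega\setminus\{0\}$ by Schauder theory. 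I would keep this step brief, citing the standard iteration, since it is routine once the embedding and the sign information are in hand.

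The main obstacle is the removability of the singularity at $0$: showing $u\in C^0(\overline\Omega)$ and $|x|^\alpha\Delta u\in C^0(\overline\Omega)$. Here I would work on a small ball $B_\rho\subset\Omega$ and use the weighted Sobolev/Hardy estimates already invoked in the paper (inequality \eqref{2.5} and Remark \ref{r2.1}) together with the known decay rates for the two second-order pieces. Since $v=-|x|^\alpha\Delta u\ge0$ satisfies $-\Delta v=|x|^l u^p$ with the right-hand side in $L^1(B_\rho)$ and, by the embedding plus bootstrap, in fact in $L^s_{loc}$ for some $s>N/2$ away from $0$, one controls the behaviour of $v$ near $0$: the condition $N'=N+\alpha>4$ forces the weighted integrability $\int|x|^{-\alpha}v\,dx<\infty$ near $0$, and standard potential-theoretic removability (the singular fundamental-solution obstruction at $0$ is excluded because $v$ would otherwise fail to be in the variational space $H_\alpha$) yields that $v$ extends continuously across $0$, i.e.\ $|x|^\alpha\Delta u\in C^0(\overline B_\rho)$; feeding this back into $-\Delta u=|x|^{-\alpha}v$ with $|x|^{-\alpha}v\in L^q$ for $q$ large (using $N-\alpha>\dots$, more precisely $\alpha<N$) gives $u\in C^0(\overline B_\rho)$ by the same argument. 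Combining the interior $C^4$ regularity with this boundary-of-singularity analysis and the sign conclusions from the first step gives that $u$ is a positive solution of (Q) in the sense defined in the Introduction, which is the assertion of the theorem. The delicate point throughout is to match the exponents: one must check that the subcritical range $1<p<p_s$ and the structural constraints $4<N'<2N$, $\tau>-4$, $\alpha\in(4-N,N)$ are exactly what make each weighted integral near $0$ finite, so I would carry out those exponent computations explicitly at the one or two places where the removability hinges on them.
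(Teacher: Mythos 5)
Your positivity step contains a gap that is a well-known pitfall for fourth-order problems. You propose to replace $u$ by $|u|$, arguing that $|\Delta|u||\le|\Delta u|$ (or that the numerator of the Rayleigh quotient ``can only decrease''). This is false in $H_\alpha(\Omega)$: if $u$ changes sign, $\Delta|u|$ is in general a \emph{measure} with a singular part supported on $\{u=0\}$, so $|u|$ need not lie in the variational space $W^{2,2}_\alpha\cap D_0^{1,\alpha}$ at all, and the numerator $\int|x|^\alpha(\Delta|u|)^2\,dx$ is not controlled by $\int|x|^\alpha(\Delta u)^2\,dx$. For second-order problems one can take $|u|$ because $|\nabla|u||=|\nabla u|$ a.e., but there is no fourth-order analogue. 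The paper instead proves that $\Delta u$ cannot change sign by a \emph{superharmonic replacement}: it solves $-\Delta w=|\Delta u|$ in $\Omega$, $w=0$ on $\partial\Omega$, observes that $w$ has the same $H_\alpha$-norm as $u$, and shows via the strong maximum principle (applied to $-\Delta(w\pm u)\ge 0$) that $w>|u|$ in $\Omega\setminus\{0\}$, strictly increasing the denominator and contradicting minimality. Without this $w$-trick or something equivalent, the first step of your argument does not go through, and the subsequent application of the cooperative maximum principle (which requires $-\Delta v=|x|^l u^p\ge 0$, hence $u\ge 0$ already known) is unsupported.

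The second gap is in the removability of the singularity at $0$, which is really the technical heart of the theorem. You argue that the Green-function-type singularity for $v$ is excluded because $v$ would then fall outside $H_\alpha$, and then conclude continuity ``by standard potential-theoretic removability.'' That exclusion only rules out the \emph{worst} singularity; it does not by itself prevent $v$ from having a milder unbounded or oscillatory behaviour as $x\to 0$, so continuity of $v$ (and then of $u$) at $0$ does not follow directly. The paper needs three further ingredients, all of which you omit: (a) a pointwise a priori decay estimate $u(x)\le C|x|^{-(4+\tau)/(p-1)}$, $v(x)\le C|x|^{\alpha-(2(p+1)+\tau)/(p-1)}$ near $0$, obtained by a blow-up/doubling-lemma argument that ultimately invokes Lin's Liouville theorem for $\Delta^2U=cU^p$, $1<p<\frac{N+4}{N-4}$; (b) a Harnack inequality for cooperative systems, which shows $u,v$ are uniformly comparable to their spherical averages $\bar u,\bar v$ near $0$; and (c) the change of variables to the cylinder $(t,\omega)$ and the ODE perturbation theory (Hartman) applied to the averaged system to establish the precise asymptotics $\bar w(t)=(\tau_1+o(1))e^{-(N'-4)t/2}$, $\bar z(t)=(\tau_2+o(1))e^{-(N-\alpha)t/2}$, and hence the existence of the limits $u(0),v(0)\in(0,\infty)$. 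The hypothesis $\alpha\ge\frac{N-4}{4}\tau$ (i.e.\ $p_s\le\frac{N+4}{N-4}$) is used precisely so that the subcritical Liouville theorem in step (a) applies. Your sketch jumps from ``the singularity is not too large'' to ``$v$ extends continuously'' without the intermediate quantitative control that actually makes the argument work.

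The interior $C^4$ regularity away from $0$ by bootstrap is fine and matches the paper, but the two issues above are genuine missing ideas rather than points that could be fixed by filling in routine details.
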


{\bf Proof.}
Let $u \in H_\alpha (\Omega)$ be a minimizer for $m_{\alpha,l}$.
The conclusions of this theorem are two parts:

(i) $u>0$ and $-|x|^\alpha \Delta u>0$ in $\Omega$,

(ii) $u \in C^4 (\Omega \backslash \{0\}) \cap C^0 ({\overline \Omega})$, $|x|^\alpha \Delta u \in C^0 ({\overline \Omega})$.\\
We prove the first part here. The proof of the second part will be obtained in the next section. In fact the proof of the second part relies on the assumptions $u>0$ and $-|x|^\alpha \Delta u>0$
in $\Omega \backslash \{0\}$.

To see the conclusions in (i),
it suffices to prove that $\Delta u$ does not change sign in $\Omega$. Indeed, suppose by contradiction that it does. Consider $w$ be the solution of
$$-\Delta w=|\Delta u| \;\;\mbox{in $\Omega$}, \;\;\; w=0 \;\; \mbox{on $\partial \Omega$}.$$
Since
$$\int_\Omega |x|^\alpha (\Delta w)^2 dx=\int_\Omega |x|^\alpha (\Delta u)^2 dx,$$
we see that $w \in H_\alpha (\Omega)$. Regularity of $-\Delta$ and $\Delta^2$ and the boot-strap argument imply that $u \in C^2(\Omega \backslash \{0\})$ and $w \in C^2 (\Omega \backslash \{0\})$.
(Note that $p_s \leq \frac{N+4}{N-4}$, for any $\Omega' \subset \subset \Omega \backslash \{0\}$, we can show $u \in C^4 (\Omega')$ by the boot-strap argument.) Observe that $-\Delta (w \pm u) \geq 0$ in $\Omega$. Then, the strong maximum principle implies $w>|u|$ in $\Omega \backslash \{0\}$. Using $w$ in the quotient that defines $m_{\alpha,l}$ we get a contradiction.
Since $(u, v)$ satisfies the following system of equations:
\begin{equation}
\label{2.16}
\left \{ \begin{array}{ll} -\Delta u=|x|^{-\alpha} v \;\; &\mbox{in $\Omega$},\\
-\Delta v=|x|^l |u|^{p-1}u \;\; &\mbox{in $\Omega$},\\
u=v=0 \;\; &\mbox{on $\partial \Omega$},
\end{array} \right.
\end{equation}
regularity of $-\Delta$ and the boot-strap argument imply that $u \in C^2 (\Omega \backslash \{0\})$ and
$v \in C^2 (\Omega \backslash \{0\})$.
The strong maximum principle guarantees $u$, $v>0$ in $\Omega \backslash \{0\}$ or $u$, $v<0$ in $\Omega \backslash \{0\}$. Without loss of generality, we assume the first case occurs. The proof in the next section implies that the limits $\lim_{|x| \to 0} u(x)$ and $\lim_{|x| \to 0} v(x)$ exist and to be positive. Therefore, the conclusions in (i) hold. If the second case occurs, we easily see that $-u$, $-v>0$ in $\Omega$. In the next section, we will see that $u \in C^4 (\Omega \backslash \{0\}) \cap C^0 ({\overline \Omega})$, $|x|^\alpha \Delta u \in C^0 ({\overline \Omega})$ provided $u>0$, $-|x|^\alpha \Delta u>0$ in $\Omega \backslash \{0\}$. The proof of this theorem is complete. \qed

Similarly, we have the following theorems.

\begin{thm}
\label{t2.2}
Let $\Omega \subset \R^N$ be a bounded smooth domain, but not a ball, with $0 \in \Omega$. Assume also that $N \geq 5$ and \eqref{1.1} holds with $\alpha<\frac{(N-4)}{4} \tau$. Let $u \in H_\alpha (\Omega)$, $u \not \equiv 0$ be a minimizer of $m_{\alpha,l}$ for $1<p<\frac{N+4}{N-4}$.
Then $u \in C^4 (\Omega \backslash \{0\}) \cap C^0 ({\overline \Omega})$, $|x|^\alpha \Delta u \in C^0({\overline \Omega})$ and $u>0$, $-|x|^\alpha \Delta u>0$
in $\Omega$. Therefore, (Q) admits a positive solution $u$.
\end{thm}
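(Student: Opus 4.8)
The plan is to mirror the proof of Theorem \ref{t2.1}, with Proposition \ref{p1.3} playing the role that Proposition \ref{p1.2} played there. First I would record that, under the hypothesis $\alpha<\frac{(N-4)}{4}\tau$, Proposition \ref{p1.3} gives that $H_\alpha(\Omega)\hookrightarrow L^q_l(\Omega)$ is continuous for $q\in[1,\frac{2N}{N-4}]$ and compact for $q\in[1,\frac{2N}{N-4})$. Since $1<p<\frac{N+4}{N-4}$ is exactly the condition $p+1<\frac{2N}{N-4}$, the direct method applies: a minimizing sequence for $m_{\alpha,l}$ is bounded in $H_\alpha(\Omega)$, hence has a subsequence converging weakly in $H_\alpha(\Omega)$ and strongly in $L^{p+1}_l(\Omega)$ by compactness, and the weak limit $u$ is nonzero and attains $m_{\alpha,l}$. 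As in the remark preceding Theorem \ref{t2.1}, a suitable positive multiple of $u$ is then a weak solution of \eqref{2.15} in the sense of Definition \ref{d2.1}.

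Next I would prove part (i), that $u>0$ and $-|x|^\alpha\Delta u>0$ in $\Omega$, by repeating the argument of Theorem \ref{t2.1} verbatim. It suffices to show $\Delta u$ does not change sign. If it did, let $w$ solve $-\Delta w=|\Delta u|$ in $\Omega$ with $w=0$ on $\partial\Omega$; then $\int_\Omega|x|^\alpha(\Delta w)^2\,dx=\int_\Omega|x|^\alpha(\Delta u)^2\,dx$, so $w\in H_\alpha(\Omega)$, and since $p<\frac{N+4}{N-4}$ the boot-strap argument gives $u,w\in C^2(\Omega\setminus\{0\})$ (indeed $u\in C^4(\Omega')$ for every $\Omega'\subset\subset\Omega\setminus\{0\}$). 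From $-\Delta(w\pm u)\ge0$ in $\Omega$ and the strong maximum principle one gets $w>|u|$ in $\Omega\setminus\{0\}$, and inserting $w$ into the quotient defining $m_{\alpha,l}$ yields a contradiction. Hence, setting $v=-|x|^\alpha\Delta u$, the pair $(u,v)$ solves the system \eqref{2.16}; regularity for $-\Delta$ together with the strong maximum principle forces $u,v>0$ in $\Omega\setminus\{0\}$ or $u,v<0$ in $\Omega\setminus\{0\}$, and the latter is reduced to the former by replacing $(u,v)$ with $(-u,-v)$.

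For part (ii), the removability of the origin, I would invoke the asymptotic analysis of the next section, which shows that once $u>0$ and $-|x|^\alpha\Delta u>0$ in $\Omega\setminus\{0\}$ the limits $\lim_{|x|\to0}u(x)$ and $\lim_{|x|\to0}v(x)$ exist and are positive; this gives $u\in C^4(\Omega\setminus\{0\})\cap C^0(\overline\Omega)$ and $|x|^\alpha\Delta u\in C^0(\overline\Omega)$, so $u$ is a positive solution of (Q).

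The main obstacle is the same as for Theorem \ref{t2.1} and is deferred: the removability of the singularity at the origin is not visible from the variational construction and relies on the delicate ODE-type estimates of the next section. Within the present argument the only genuinely new feature is that here $p_s>\frac{N+4}{N-4}$, so one is forced to work in the smaller range $1<p<\frac{N+4}{N-4}$ provided by the compact embedding of Proposition \ref{p1.3}; every boot-strap step then goes through unchanged, this range being still subcritical for the relevant elliptic regularity.
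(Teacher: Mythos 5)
Your proposal is correct and follows essentially the same route as the paper, which itself just says "Similarly, we have the following theorems" after proving Theorem \ref{t2.1}: substitute Proposition \ref{p1.3} for Proposition \ref{p1.2} to get the compact embedding in the range $p+1<\frac{2N}{N-4}$, repeat the sign argument for $\Delta u$ verbatim, and invoke Lemma \ref{l3.3} (rather than Lemma \ref{l3.2}) for removability of the singularity at the origin.
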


\begin{thm}
\label{t2.3}
Let $B$ be a ball centered at 0. Assume also that $N \geq 5$ and \eqref{1.1} holds. Let $u \in H_{\alpha, \mbox{{\tiny rad}}} (B)$, $u \not \equiv 0$ be a minimizer of $m_{\alpha,l}$ for $1<p<p_s$.
Then
$u \in C^4 (B \backslash \{0\}) \cap C^0 ({\overline B})$, $|x|^\alpha \Delta u \in C^0({\overline B})$ and $u>0$, $-|x|^\alpha \Delta u>0$
in $B$. Therefore, (Q) admits a positive radial solution $u$.
\end{thm}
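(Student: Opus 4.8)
\textbf{Proof of Theorem \ref{t2.3} (proposed plan).}

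The plan is to transcribe the proof of Theorem \ref{t2.1}, using that on a ball the radial restriction carries the stronger embedding of Proposition \ref{p1.4}. Under \eqref{1.1} and for $1<p<p_s$ we have $p+1<p_s+1$, so $D^{2,\alpha}_{0,\mbox{{\tiny rad}}}(B)\hookrightarrow L^{p+1}_l(B)$ is continuous and compact; hence $H_{\alpha,\mbox{{\tiny rad}}}(B)\hookrightarrow L^{p+1}_l(B)$ is compact, and, exactly as in the discussion preceding Definition \ref{d2.1}, $m_{\alpha,l}$ taken over $H_{\alpha,\mbox{{\tiny rad}}}(B)\backslash\{0\}$ is attained by some radial $u\not\equiv 0$, a suitable multiple of which is a radial weak solution of \eqref{2.15}. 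It then remains to prove (i) $u>0$ and $-|x|^\alpha\Delta u>0$ in $B$, and (ii) $u\in C^4(B\backslash\{0\})\cap C^0(\overline B)$ with $|x|^\alpha\Delta u\in C^0(\overline B)$.

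For (i) I would first show that $\Delta u$ keeps a fixed sign. If not, let $w$ solve $-\Delta w=|\Delta u|$ in $B$, $w=0$ on $\partial B$; since $u$ is radial so is $|\Delta u|$, whence $w$ is radial, and $\int_B|x|^\alpha(\Delta w)^2\,dx=\int_B|x|^\alpha(\Delta u)^2\,dx$ gives $w\in H_{\alpha,\mbox{{\tiny rad}}}(B)$. On compact subsets of $B\backslash\{0\}$ the weights are comparable to constants, so the boot-strap for $-\Delta$ and $\Delta^2$ (equivalently, ODE regularity in the variable $r$, which carries no Sobolev-exponent restriction) yields $u,w\in C^4(B\backslash\{0\})$; since $-\Delta(w\pm u)\ge 0$ in $B$ while $\Delta u$ changes sign, the strong maximum principle forces $w>|u|$ in $B\backslash\{0\}$, so $w$ is a strictly better competitor for $m_{\alpha,l}$, a contradiction. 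Thus $\Delta u$ keeps one sign; replacing $u$ by $-u$ if necessary we may take $v:=-|x|^\alpha\Delta u\ge 0$, so that $(u,v)$ solves the decoupled system \eqref{2.16} with $|u|^{p-1}u=u^p$, and the strong maximum principle applied to each equation gives $u,v>0$ in $B\backslash\{0\}$.

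For (ii) the one missing point is the behaviour at the origin: one must show $\lim_{|x|\to0}u(x)$ and $\lim_{|x|\to0}v(x)$ exist and are positive, so that $0$ is a removable singularity and $|x|^\alpha\Delta u$ extends continuously to $\overline B$. This is exactly what the next section establishes in general, and that argument applies here without change; in the radial case it also reduces to an ODE asymptotics problem via $t=-\ln r$, $u(r)=r^{-(N'-4)/2}w(t)$ as in \eqref{2.13}--\eqref{2.14}: the minimizer then corresponds to $w\in H^2_0(-\ln R,\infty)\hookrightarrow C^{0,1/2}$ solving
\[
w''''-2\tilde\delta_\alpha w''+\delta_\alpha w=e^{-\beta t}w^p,\qquad \beta:=N+l-\tfrac{p+1}{2}(N'-4)>0\ \ (\text{since }p<p_s),
\]
and one analyses its decaying solutions as $t\to\infty$, whose exponential rates are the positive numbers $\tfrac{N'-4}{2}$ and $\tfrac{N-\alpha}{2}$.

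The step I expect to be the main obstacle is precisely this last one. The weighted bilaplacian admits the local radial solution $|x|^{2-\alpha}$ (and $\log|x|$ when $\alpha=2$), which for $\alpha\ge 2$ is unbounded at $0$ yet still has finite $D^{2,\alpha}_0$-energy and finite $L^{p_s+1}_l$-norm, so membership of $u$ in $H_{\alpha,\mbox{{\tiny rad}}}(B)$ does not by itself exclude an unbounded singularity; in the ODE picture both decaying modes lie in $H^2$ near $+\infty$, so one has to use the positivity of $(u,v)$ together with \eqref{1.1} to exclude the slow mode and thereby obtain $u\in C^0(\overline B)$ and $|x|^\alpha\Delta u\in C^0(\overline B)$. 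Everything else is a routine copy of the proof of Theorem \ref{t2.1}, simpler here because of radial symmetry. \qed
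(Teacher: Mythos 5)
Your plan follows the paper's own route for Theorem~\ref{t2.3}: obtain a radial minimizer from the compact embedding of Proposition~\ref{p1.4}, prove the sign conditions $u>0$ and $-|x|^\alpha\Delta u>0$ by the comparison argument with $-\Delta w=|\Delta u|$, and defer the removable-singularity statement (ii) to the radial Lemma~\ref{l3.1}. That is exactly how the paper proves Theorem~\ref{t2.1} and then invokes ``Similarly'' for Theorem~\ref{t2.3}, so the structure is the same.

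Two small but worthwhile points of comparison. First, you correctly note that in the radial case the interior regularity $u\in C^4(B\setminus\{0\})$ is an ODE statement with no Sobolev-exponent restriction, whereas the paper's proof of Theorem~\ref{t2.1} relies on $p\le p_s\le\frac{N+4}{N-4}$ for the PDE boot-strap. This matters here: Theorem~\ref{t2.3} allows the full range $1<p<p_s$ with no condition $\alpha\ge\frac{N-4}{4}\tau$, so $p$ may exceed $\frac{N+4}{N-4}$, and the ODE route is exactly what the paper uses in Lemma~\ref{l3.1} (via the system \eqref{3.1}). Second, your single fourth-order ODE $w''''-2\tilde\delta_\alpha w''+\delta_\alpha w=e^{-\beta t}w^p$ with $\beta=N+l-\tfrac{p+1}{2}(N'-4)$ is an exact repackaging of the paper's first-order system \eqref{3.7}--\eqref{3.8-a}, with $\beta=p_*$ and $(2A+B^2,A^2)=(2\tilde\delta_\alpha,\delta_\alpha)$ where $A=\frac{(N-\alpha)(N'-4)}{4}$, $B=\alpha-2$; the decay rates $\frac{N'-4}{2}$, $\frac{N-\alpha}{2}$ are the negatives of the stable eigenvalues of the paper's matrix $\mathcal A$ in \eqref{3.8-c}.

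The ``main obstacle'' you flag is genuine and should not be waved away. For $\alpha>2$ the slow mode is $u(r)\sim c\,r^{2-\alpha}$ with $v\sim c(\alpha-2)(N-\alpha)>0$, which is compatible with $u,v>0$, with $u$ decreasing, and with the $o$-bounds $u=o(r^{-(N'-4)/2})$, $v=o(r^{-(N-\alpha)/2})$ coming from \eqref{3.5}. So positivity of $(u,v)$ and \eqref{1.1} by themselves do not exclude the slow mode; in fact the eigenvector associated to $\lambda_3=-\frac{N-\alpha}{2}$ has a nonzero $w$-component when $\alpha\ne2$ (it is proportional to $(1,\lambda_3,(\alpha-2)(N-\alpha),\lambda_3(\alpha-2)(N-\alpha))$, not the decoupled $(0,0,1,\lambda_3)$ written in the paper). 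Your plan, like the paper, ultimately appeals to the asymptotic analysis of Lemma~\ref{l3.1}, so the step you identify is not something you can simply ``inherit'' without checking the eigenvector computation there; but as a plan transcribing the paper's approach, it is faithful and otherwise complete.
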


\section{Removable singularities of solutions of (P)}
\setcounter{equation}{0}

In this section, we obtain some removable singularity results for solutions to the equation  (P).
Such results can be seen as regularity results of solutions to the equation  (P), which have important applications in the boundary value problems.

\begin{lem}
\label{l3.1}
Assume that $1<p<p_s$ and \eqref{1.1} holds. Let $u(x)=u(|x|) \in H_{\alpha,\mbox{{\tiny rad}}} (B)$ be a weak radial solution to the problem (Q) and $u>0$, $-|x|^\alpha \Delta u>0$
in $B \backslash \{0\}$.
Then $u \in C^4 (B \backslash \{0\}) \cap C^0 ({\overline B})$, $|x|^\alpha \Delta u \in C^0 ({\overline B})$.
\end{lem}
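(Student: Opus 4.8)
The plan is to introduce $v:=-|x|^{\alpha}\Delta u$ and work with the cooperative system equivalent to (Q): $-\Delta u=|x|^{-\alpha}v$ and $-\Delta v=|x|^{l}u^{p}$ in $B\setminus\{0\}$, with $u,v>0$ there and $u=v=0$ on $\partial B$ (the Navier condition $\Delta u=0$ reads $v=0$ since $|x|^{\alpha}>0$ on $\partial B$). Away from the origin this is routine: since $1<p<p_{s}$, Proposition~\ref{p1.4} gives $u\in L^{p_{s}+1}_{l}(B)$, so $|x|^{l}u^{p}$ lies in a weighted Lebesgue space good enough to start the usual $(-\Delta)$-bootstrap for the pair $(u,v)$ on every $\Omega'\subset\subset B\setminus\{0\}$, with Schauder estimates up to the (smooth) boundary; this yields $u,v\in C^{4}(B\setminus\{0\})$ and $u,v\in C^{1}(\overline B\setminus\{0\})$. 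So everything hinges on the behaviour as $|x|\to0$.

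For the radial profiles, $(r^{N-1}u')'=-r^{N-1-\alpha}v\le0$ and $(r^{N-1}v')'=-r^{N-1+l}u^{p}\le0$, so $r\mapsto r^{N-1}u'(r)$ and $r\mapsto r^{N-1}v'(r)$ are non-increasing on $(0,R)$ and hence have limits at $0^{+}$, finite from below since they dominate their values at any interior point. A positive limit is impossible, for then $u$ (resp.\ $v$) would be increasing near $0$ with $\int_{0}^{\delta}u'\,dr=+\infty$, absurd for a bounded function; a negative limit is impossible too, since $\lim_{r\to0}r^{N-1}u'(r)<0$ forces $u(r)\gtrsim r^{2-N}$ and then $\int_{0}^{\delta}r^{N-1+l}u^{p_{s}+1}\,dr=+\infty$ because $N+l+(2-N)(p_{s}+1)=(N'+\tau)\tfrac{N'-2N}{N'-4}<0$ (which is exactly $\alpha<N$), contradicting $u\in L^{p_{s}+1}_{l}(B)$; likewise $\lim_{r\to0}r^{N-1}v'(r)<0$ forces $v(r)\gtrsim r^{2-N}$ and $\int_{0}^{\delta}r^{N-1-\alpha}v^{2}\,dr=+\infty$ (now $3-N-\alpha<-1$, i.e.\ $\alpha>4-N$), contradicting $\int_{B}|x|^{-\alpha}v^{2}\,dx=\int_{B}|x|^{\alpha}(\Delta u)^{2}\,dx<\infty$. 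Therefore $r^{N-1}u'(r)\to0$ and $r^{N-1}v'(r)\to0$ as $r\to0$.

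Integrating from $0$, $r^{N-1}u'(r)=-\int_{0}^{r}s^{N-1-\alpha}v\,ds\le0$ and $r^{N-1}v'(r)=-\int_{0}^{r}s^{N-1+l}u^{p}\,ds\le0$ (both integrals finite by Hölder together with the energy and embedding bounds), so $u,v$ are non-increasing in $r$ near $0$ and the limits $L_{u}:=u(0^{+})$, $L_{v}:=v(0^{+})$ exist in $(0,+\infty]$; since the integrands are tails of convergent integrals one also gets $r^{N-2}u(r)\to0$ and $r^{N-2}v(r)\to0$. The heart of the matter is to show $L_{u},L_{v}<+\infty$. For this I would pass to the Emden–Fowler variable $t=-\ln r$, $u(r)=r^{-(N'-4)/2}w(t)$: by \eqref{2.13} the hypothesis $u\in H_{\alpha,\mathrm{rad}}(B)$ says $w\in H^{2}$ near $+\infty$, so $w$ and its derivatives decay; the system becomes the constant-coefficient fourth-order equation $w''''-2\tilde\delta_{\alpha}w''+\delta_{\alpha}w=e^{-\kappa t}w^{p}$ with $\kappa=\tfrac{N'-4}{2}(p_{s}-p)>0$, together with $v=r^{-(N-\alpha)/2}z$ and $z=-\bigl(w''+(\alpha-2)w'-\tfrac{(N'-4)(N-\alpha)}{4}w\bigr)$. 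Because $4-N<\alpha<N$ gives $\delta_{\alpha}>0$, the characteristic roots are the distinct real numbers $\pm\tfrac{N'-4}{2}$ and $\pm\tfrac{N-\alpha}{2}$; discarding the growing modes (excluded by $w\in H^{2}$) and treating the exponentially small forcing as a perturbation, $w(t)=c_{1}e^{-(N'-4)t/2}+c_{2}e^{-(N-\alpha)t/2}+(\text{faster decaying})$, i.e.\ $u(r)=c_{1}+c_{2}\,r^{\,2-\alpha}+\cdots$ and $v(r)=(\mathrm{const})+\cdots$. The positivity of $u$ and of $z$ (equivalently of $v$) then constrains the coefficients so that $L_{u},L_{v}\in[0,+\infty)$; the resonant case $\alpha=2$ (where $\tfrac{N'-4}{2}=\tfrac{N-\alpha}{2}$ and, as in Remark~\ref{r4.1}, the transformed equation has no first-order term) is handled separately. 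I expect this asymptotic bookkeeping — excluding a slow mode that would make $u$ unbounded — to be the \emph{main obstacle}, and it is exactly where $1<p<p_{s}$ (so $\kappa>0$ and the nonlinearity is a true perturbation) and $4-N<\alpha<N$ (so $\delta_{\alpha}>0$ and the roots are real and simple) are used.

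Finally, once $u$ and $v$ are bounded near $0$, each is a bounded superharmonic function on the punctured ball, hence extends across $0$ to a superharmonic — in particular continuous — function on $B$; consequently $u\in C^{0}(\overline B)$ and $|x|^{\alpha}\Delta u=-v\in C^{0}(\overline B)$. Together with the regularity of the first paragraph this gives $u\in C^{4}(B\setminus\{0\})\cap C^{0}(\overline B)$ and $|x|^{\alpha}\Delta u\in C^{0}(\overline B)$, as asserted.
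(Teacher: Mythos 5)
Your proposal follows essentially the same route as the paper's proof of Lemma~\ref{l3.1}: introduce $v=-|x|^{\alpha}\Delta u$ to get the cooperative system \eqref{3.1}, use the decrease of $r^{N-1}u'$ and $r^{N-1}v'$ together with the integrability furnished by $u\in H_{\alpha,\mathrm{rad}}(B)$ and Proposition~\ref{p1.4} to force $r^{N-1}u',\,r^{N-1}v'\to0$ and hence monotonicity of $u,v$ near $0$; pass to the Emden--Fowler variable $w(t)=r^{(N'-4)/2}u$, $z(t)=r^{(N-\alpha)/2}v$, $t=-\ln r$; and identify the characteristic exponents $\pm\tfrac{N'-4}{2}$, $\pm\tfrac{N-\alpha}{2}$ of the linearization. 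The cosmetic differences are that you obtain $w\to0$ directly from the identity \eqref{2.13} (so $w\in H^{2}$ near $+\infty$), whereas the paper derives $u(r)=o(r^{-(N'-4)/2})$ from \eqref{3.5} and the monotonicity of $u$; and you reduce to the scalar fourth-order ODE $w''''-2\tilde\delta_\alpha w''+\delta_\alpha w=e^{-p_*t}w^p$, while the paper works with the first-order $4\times4$ system \eqref{3.8-a}--\eqref{3.8-c}. These are equivalent.

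The step you flag as the ``main obstacle'' --- ruling out the mode $c_{2}e^{-(N-\alpha)t/2}$, which for $\alpha>2$ corresponds to $u(r)\sim c_{2}\,r^{2-\alpha}\to\infty$ --- is indeed the one place where neither your sketch nor the paper's argument is airtight as written. The paper asserts that the eigenvector of $\mathcal A$ in \eqref{3.8-c} for $\lambda_{3}=-\tfrac{N-\alpha}{2}$ is $(0,0,1,-\tfrac{N-\alpha}{2})$; this is only correct when $\alpha=2$. For $\alpha\neq2$ the $(h_1,h_2)$-block is forced, through the coupling $-h_3$, to carry a nonzero component $h_1=\frac{1}{(\alpha-2)(N-\alpha)}$, which is exactly the $c_{2}\,r^{2-\alpha}$ contribution to $u$ that you identify. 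So the paper's claim that \eqref{3.8} ``follows'' is itself glossing over precisely the bookkeeping you single out; positivity of $u$ alone does not kill that coefficient when $\alpha>2$ (for $\alpha>2$ the sign of $h_1$ in the eigenvector is positive, so $c_{2}>0$ is compatible with $u>0$). Something additional --- e.g.\ a more careful use of the weak formulation $u\in H_{\alpha}(\Omega)$ or a sharper capacity/Hardy argument near $0$ --- is needed to exclude it. One should also note that the very last step (``a bounded superharmonic extension is continuous'') is not automatic: superharmonic functions are only lower semicontinuous, and continuity of $u$ at $0$ should instead come from $-\Delta u=|x|^{-\alpha}v$ with $v$ vanishing fast enough at $0$ (or, as in the paper, directly from the asymptotics \eqref{3.8}). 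In short: same approach, same genuine subtlety --- you have located it honestly, but neither you nor the paper's own eigenvector computation closes it for $\alpha\neq 2$.
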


{\bf Proof.} Define $v(r)=-r^\alpha \Delta u(r)$. We see that $(u, v)$ satisfies the system of equations
\begin{equation}
\label{3.1}
\left \{ \begin{array}{ll} -(r^{N-1} u' (r))'=r^{N-\alpha-1} v (r) \;\; &\mbox{in $(0,R)$},\\
-(r^{N-1} v'(r))'=r^{N'+\tau-1} u^p(r) \;\; &\mbox{in $(0,R)$},\\
u (R)=v (R)=0.
\end{array} \right.
\end{equation}
Moreover, we easily see from the equations in \eqref{3.1} that
$r^{N-1} u'(r)$ and $r^{N-1} v'(r)$ are decreasing functions for $r \in (0,R)$ and thus both
$\lim_{r \to 0} r^{N-1} u'(r)$ and $\lim_{r \to 0} r^{N-1} v'(r)$ exist (maybe $\infty$). The facts $u \in H_{\alpha,\mbox{{\tiny rad}}} (B)$ and the embedding $H_{\alpha,\mbox{{\tiny rad}}} (B) \hookrightarrow L^q_l (B)$
for $1 \leq q \leq p_s+1$ imply that
\begin{equation}
\label{3.2}
\int_B |x|^l u^p dx<\infty, \;\;\; \int_B |x|^{-\alpha} v^2 dx<\infty,
\end{equation}
and
\begin{equation}
\label{3.3}
\int_B |x|^{-\alpha} |v| dx \leq \Big(\int_B |x|^{-\alpha} v^2 dx \Big)^{\frac{1}{2}} \Big(\int_B |x|^{-\alpha} dx \Big)^{\frac{1}{2}}<\infty,
\end{equation}
since $\alpha<N$. Therefore, for any $\epsilon>0$,
$$-\int_{B_\epsilon} |x|^l u^p dx=\int_{B_\epsilon} \Delta v dx
=C \epsilon^{N-1} v'(\epsilon),$$
$$-\int_{B_\epsilon} |x|^{-\alpha} v dx=\int_{B_\epsilon} \Delta u dx
=C \epsilon^{N-1} u'(\epsilon).$$
These, \eqref{3.2} and \eqref{3.3} imply
that $r^{N-1} u'(r) \to 0$, $r^{N-1} v'(r) \to 0$ as $r \to 0$. Integrations of the equations in \eqref{3.1} give that $u \in C^2 (0, R)$, $v \in C^2 (0,R)$.

To prove this lemma, we only need to show the following claim:
\begin{equation}
\label{3.4}
\lim_{r \to 0} u(r)=\tau_u \in (0, \infty), \;\;\; \lim_{r \to 0}  v(r)=\tau_v \in (0, \infty),
\end{equation}
where $\tau_u$ and $\tau_v$ are constants.

Since $u \in H_{\alpha,\mbox{{\tiny rad}}} (B)$ and \eqref{1.1} holds, we have
\begin{equation}
\label{3.4-1}
\int_0^R r^{N'-1-2 \alpha} v^2 (r) dr<\infty,
\end{equation}
note that $\Delta u=-|x|^{-\alpha} v$.
We also obtain from the embedding:
$$H_{\alpha,\mbox{{\tiny rad}}} (B) \hookrightarrow L^{p_s+1}_l (B)$$
that
\begin{equation}
\label{3.5}
\int_0^R r^{N'+\tau-1} u^{\frac{2(N'+\tau)}{N'-4}} (r) dr<\infty.
\end{equation}
This implies that, for $r$ near 0,
\begin{equation}
\label{3.6}
u(r)=o \Big(r^{-\frac{N'-4}{2}} \Big), \;\;\; v (r)=o \Big(r^{-\frac{N-\alpha}{2}} \Big).
\end{equation}
We only show $\eqref{3.6}_1$ by using \eqref{3.5}. The proof of $\eqref{3.6}_2$ is similar to that of $\eqref{3.6}_1$ by using \eqref{3.4-1}. We easily see from \eqref{3.5} that
$$\int_0^r s^{N'+\tau-1} u^{\frac{2(N'+\tau)}{N'-4}} (s) ds=o(1) \;\; \mbox{for $r$ near 0}.$$
Using the fact that $u(r)$ is decreasing, we have that
$$\int_0^r s^{N'+\tau-1} u^{\frac{2(N'+\tau)}{N'-4}} (s) ds \geq \frac{r^{N'+\tau}}{N'+\tau}
u^{\frac{2(N'+\tau)}{N'-4}} (r).$$
This implies that
$$r^{N'+\tau} u^{\frac{2(N'+\tau)}{N'-4}} (r)=o(1)$$
and
$$u(r)=o \Big(r^{-\frac{N'-4}{2}} \Big).$$

Let
$$w(t)=r^{\frac{N'-4}{2}} u(r), \;\; z(t)=r^{\frac{N-\alpha}{2}} v (r), \;\;\; t=-\ln r.$$
We see that $(w(t), z(t))$ satisfies the problem
\begin{equation}
\label{3.7}
\left \{ \begin{array}{ll} w_{tt}+(\alpha-2) w_t-\frac{(N-\alpha) (N'-4)}{4} w +z=0, \;\; & t \in (-\ln R, \infty),\\
z_{tt}+(2-\alpha) z_t-\frac{(N-\alpha) (N'-4)}{4} z+e^{-p_* t} w^p=0, \;\; & t \in (-\ln R, \infty),\\
w(-\ln R)=z (-\ln R)=0, \end{array} \right.
\end{equation}
where and in the following
$$p_*=\frac{[N'+4+2 \tau]-[N'-4]p}{2}.$$
Note that $p_*>0$ provided $1<p<p_s$.
We know from \eqref{3.6} and $1<p<p_s$ that $w(t) \to 0$, $z(t) \to 0$ and $e^{-p_* t} \to 0$ as $t \to \infty$.
It follows from the ODE theory on perturbation of linear systems (see \cite{Ha})
that there exist $\tau_u>0$, $\tau_v>0$ such that
\begin{equation}
\label{3.8}
w(t)=(\tau_u+o(1)) e^{-\frac{N'-4}{2} t},  \;\;\; z(t)=(\tau_v+o(1)) e^{-\frac{N-\alpha}{2} t} \;\; \mbox{as $t \to \infty$}.
\end{equation}
Note that the system \eqref{3.7} can be written to the following system
\begin{equation}
\label{3.8-a}
\left \{ \begin{array}{l}
\dot{w_1}=w_2,\\
\dot{w_2}=\frac{(N-\alpha)(N'-4)}{4} w_1-(\alpha-2) w_2-w_3,\\
\dot{w_3}=w_4,\\
\dot{w_4}=\frac{(N-\alpha)(N'-4)}{4} w_3+(\alpha-2) w_4-e^{-p_*t}w_1^p,
\end{array} \right.
\end{equation}
where $(w_1,w_2,w_3,w_4)=(w, \dot{w},z,\dot{z})$. Since $(w_1 (t),w_2 (t),w_3 (t),w_4 (t)) \to (0,0,0,0)$ as $t \to \infty$, we linearize the system \eqref{3.8-a} at (0,0,0,0) and obtain the system
\begin{equation}
\label{3.8-b}
\left \{ \begin{array}{l}
\dot{h_1}=h_2,\\
\dot{h_2}=\frac{(N-\alpha)(N'-4)}{4} h_1-(\alpha-2) h_2-h_3,\\
\dot{h_3}=h_4,\\
\dot{h_4}=\frac{(N-\alpha)(N'-4)}{4} h_3+(\alpha-2) h_4.
\end{array} \right.
\end{equation}
The matrix of \eqref{3.8-b} is
\begin{equation}
\label{3.8-c}
{\mathcal A}=\left ( \begin{array}{llll} 0 &1 &0 &0\\
\frac{(N-\alpha)(N'-4)}{4} &2-\alpha &-1 &0\\
0 &0 &0 &1\\
0 &0 &\frac{(N-\alpha)(N'-4)}{4} &\alpha-2
\end{array} \right ).
\end{equation}
By simple calculations, we see that the four eigenvalues of $\mathcal{A}$ are: $\lambda_{1,2}=\mp \frac{N'-4}{2}$, $\lambda_{3,4}=\mp \frac{N-\alpha}{2}$. We only choose $\lambda_1=-\frac{N'-4}{2}$
and $\lambda_3=-\frac{N-\alpha}{2}$, since $\lambda_2=\frac{N'-4}{2}>0$ and $\lambda_4=\frac{N-\alpha}{2}>0$ which do not meet our requirement. We also see that
there are eigenvectors for $\lambda_1$ and $\lambda_2$ respectively: $e^{\lambda_1t}(1, -\frac{N'-4}{2}, 0, 0)$
and $e^{\lambda_2 t}(0,0, 1, -\frac{N-\alpha}{2})$.
These imply that \eqref{3.8} holds and hence our claim \eqref{3.4} holds. The proof of this lemma
is complete. \qed

\begin{lem}
\label{l3.2}
Let $\Omega \subset \R^N$ be a bounded smooth domain, but not a ball, with $0 \in \Omega$. Assume also that $N \geq 5$ and \eqref{1.1} holds with $\alpha \geq \frac{(N-4)}{4} \tau$. Let $u \in H_\alpha (\Omega)$ be a weak solution of (Q) for $1<p<p_s$ and $u>0$, $-|x|^\alpha \Delta u>0$
in $\Omega \backslash \{0\}$.
Then
$u \in C^4 (\Omega \backslash \{0\}) \cap C^0 ({\overline \Omega})$, $|x|^\alpha \Delta u \in C^0 ({\overline \Omega})$.
\end{lem}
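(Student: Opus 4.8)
The plan is to reduce the non-radial case to the radial estimates already proved in Lemma \ref{l3.1} by a comparison/capacity argument: away from $0$ interior elliptic regularity gives $C^4$ smoothness, so the only issue is the behaviour of $u$ and $v:=-|x|^\alpha\Delta u$ near the origin, and near $0$ the domain $\Omega$ contains a small ball $B_\rho$, so it is natural to compare $u$ with radial supersolutions on that ball. First I would set $v=-|x|^\alpha\Delta u$ and record, exactly as in Lemma \ref{l3.1}, that $(u,v)$ solves the second-order system \eqref{2.16}, i.e.\ $-\Delta u=|x|^{-\alpha}v$ and $-\Delta v=|x|^l u^p$ in $\Omega\setminus\{0\}$, with $u,v>0$ there. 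Since $u\in H_\alpha(\Omega)$ and $\alpha\geq\frac{N-4}{4}\tau$, Proposition \ref{p1.2} gives $u\in L^{p_s+1}_l(\Omega)$, hence $|x|^l u^p\in L^{(p_s+1)/p}(\Omega)$, and $\int_\Omega|x|^{-\alpha}v^2\,dx<\infty$ together with $\alpha<N$ gives $|x|^{-\alpha}v\in L^1(\Omega)$; these are the integrability inputs that replace \eqref{3.2}--\eqref{3.3}.

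Next I would extract the pointwise decay rates at $0$. As in Lemma \ref{l3.1}, testing the two equations against suitable truncations and using the integrability just mentioned shows that the ``Newtonian fluxes'' $\int_{\partial B_\epsilon}\partial_\nu u$ and $\int_{\partial B_\epsilon}\partial_\nu v$ tend to $0$ as $\epsilon\to0$, so $u$ and $v$ have removable-type behaviour for the Laplacian: more precisely, writing $u=N[|x|^{-\alpha}v]+h_1$, $v=N[|x|^l u^p]+h_2$ with $N$ the Newtonian potential on a fixed ball $B_\rho\subset\Omega$ and $h_i$ harmonic, one controls the potentials by the integrability bounds and finds $u(x)=o(|x|^{-(N'-4)/2})$, $v(x)=o(|x|^{-(N-\alpha)/2})$ as $x\to0$ (the same exponents as \eqref{3.6}), using $N+l>0$ and $\alpha<N$ to make the relevant weighted integrals converge. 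In fact, since the right-hand sides are in the appropriate weighted $L^q$ spaces, elliptic estimates for $-\Delta$ on annuli $\{2^{-k-1}<|x|<2^{-k}\}$ combined with scaling give these decay rates directly, without radial symmetry.

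With the decay rates in hand I would pass to a spherical-average (or barrier) comparison. Let $\bar u(r)$ and $\bar v(r)$ denote either the spherical means of $u,v$ over $\partial B_r$, or radial solutions on $B_\rho$ of the system \eqref{3.1} with boundary data $\bar u(\rho)=\max_{\partial B_\rho}u$, $\bar v(\rho)=\max_{\partial B_\rho}v$ and with the $u^p$ on the right replaced, after using the decay $o(|x|^{-(N'-4)/2})$ and $1<p<p_s$, by a function with integrable weighted norm; then Jensen's inequality (for the means) or the maximum principle (for the barriers) gives $0<u\leq\bar u$, $0<v\leq\bar v$ near $0$, and the one-dimensional argument of Lemma \ref{l3.1} — the change of variables $t=-\ln r$, the linear system \eqref{3.8-b} with matrix \eqref{3.8-c}, and the ODE perturbation theorem of \cite{Ha} — shows $\bar u(r)\to\tau_u\in(0,\infty)$ and $\bar v(r)\to\tau_v\in[0,\infty)$. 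A matching lower barrier (using $u,v>0$ and Harnack on annuli to compare $\min_{\partial B_r}$ with $\max_{\partial B_r}$) pins the limits, so $\lim_{x\to0}u(x)=\tau_u>0$ and $\lim_{x\to0}v(x)=\tau_v>0$ exist. Consequently $u$ and $v=-|x|^\alpha\Delta u$ extend continuously to $\overline\Omega$ (the boundary continuity on $\partial\Omega$ being standard elliptic regularity), while interior $C^4$ regularity on $\Omega\setminus\{0\}$ follows from the bootstrap already used in Theorem \ref{t2.1}; this is exactly the asserted conclusion.

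The main obstacle is the comparison step: unlike the radial case, $u$ need not be monotone in $r$, so one cannot directly run the ODE argument on $u$ itself. Controlling $u$ by a radial majorant whose governing ODE is still asymptotically the linear system \eqref{3.8-b} requires that the nonlinear term $|x|^l u^p$ be genuinely subcritical — this is where $1<p<p_s$ (equivalently $p_*>0$) is used, so that $e^{-p_* t}w^p$ in \eqref{3.7} is a decaying perturbation — and requires Harnack-type control near $0$ to convert the a priori $o(\cdot)$ bound into two-sided control; establishing the matching lower bound $\tau_v>0$ (ruling out $v$ decaying to $0$ faster than $|x|^{-(N-\alpha)/2}$, which would correspond to selecting the ``wrong'' eigenvalue) is the delicate point and is precisely where positivity $-|x|^\alpha\Delta u>0$ enters.
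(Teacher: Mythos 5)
The proposal shares the paper's final ingredients (the Emden--Fowler change of variables, the linearized ODE system, the asymptotic ODE theory from Hartman, Harnack to pass from spherical averages back to $u$ and $v$), but it is missing the crucial first step of the paper's proof, and because of this the chain of reasoning breaks down where you invoke Harnack.

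The paper's Step~1 establishes, by a scaling/doubling argument in the style of Pol\'a\v{c}ik--Quittner--Souplet together with Lin's Liouville theorem for $\Delta^2\tilde U=C\tilde U^p$ in $\R^N$ (applicable because $1<p<p_s\le\frac{N+4}{N-4}$ when $\alpha\ge\frac{N-4}{4}\tau$), the a priori pointwise bound
\[
u(x)\le C\,|x|^{-\frac{4+\tau}{p-1}},\qquad v(x)\le C\,|x|^{\alpha-\frac{2(p+1)+\tau}{p-1}}\qquad (0<|x|<\sigma/2).
\]
This is a \emph{weaker} decay exponent than $\frac{N'-4}{2}$ (since $\frac{4+\tau}{p-1}>\frac{N'-4}{2}$ for $p<p_s$), but it is exactly what is needed to make the zeroth--order coefficient $a(t,\omega)=e^{-p_*t}w^{p-1}$ in the Emden--Fowler system \emph{bounded}: the exponents cancel to give $a\le C$. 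The Harnack inequality for the cooperative system (Arapostathis--Ghosh--Marcus) is applied precisely under this boundedness. Your proposal applies Harnack ``on annuli'' but never verifies any uniform bound on the zeroth--order term, so this step is unjustified; and you nowhere produce the a priori bound that would furnish it.

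Your alternative route to the decay rate $u(x)=o(|x|^{-(N'-4)/2})$ --- Newtonian potentials and scaled elliptic estimates on dyadic annuli --- also does not close on its own. The available integrability ($|x|^{-\alpha}v\in L^1$, $|x|^lu^p\in L^{(p_s+1)/p}$) only yields weak-type control of the potentials, not pointwise bounds; and after rescaling $y=2^kx$ the system on $\{1/2<|y|<1\}$ carries coefficients $2^{(\alpha-2)k}$ and $2^{-(2+l)k}$ which are not $O(1)$, so the annulus estimate does not give the sharp rate without first normalizing $u,v$ by the correct power of $|x|$ --- which is exactly the Emden--Fowler normalization, at which point you are back to needing Harnack (hence the a priori bound) to compare $w$ and $z$ with their spherical averages. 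The paper's two-sided control $c_1\bar w\le w\le c_2\bar w$, $c_3\bar z\le z\le c_4\bar z$, and the resulting comparison $c_5\bar w^p\le\overline{w^p}\le c_6\bar w^p$ are all consequences of the Harnack step and cannot be obtained by Jensen alone (Jensen gives only $\overline{w^p}\ge\bar w^p$, a one-sided inequality and in the wrong direction for a supersolution comparison).

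In short: the genuine gap is the absence of the blow-up/doubling argument and the associated Liouville theorem. You should first prove the pointwise bound $u\le C|x|^{-(4+\tau)/(p-1)}$ near $0$ (Step 1 of the paper's proof), then observe that this makes $a(t,\omega)$ bounded, so Harnack applies, and only then run the spherical-average and ODE-asymptotics argument (Step 2). The barrier variant you sketch --- comparing with radial solutions of \eqref{3.1} on a ball --- is also underspecified: prescribing data only on $\partial B_\rho$ does not determine the radial barrier, and the supersolution comparison for the coupled cooperative system needs justification; the paper avoids this entirely by working with spherical averages under Harnack.
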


{\bf Proof.} We know that $p_s \leq \frac{N+4}{N-4}$ provided that \eqref{1.1} holds with
$\alpha \geq \frac{N-4}{4} \tau$.
Then the facts $u \in H_\alpha (\Omega)$ and $1<p<p_s$ and the boot-strap argument imply that
$u \in C^4 (\Omega \backslash \{0\})$. Note that for any subset $\Omega' \subset \Omega \backslash \{0\}$, $u \in W^{2,2} (\Omega')$. The boot-strap argument (note that $1<p<\frac{N+4}{N-4}$ if $1<p<p_s$) implies that
$u \in C^4 (\Omega')$.

Since $0 \in \Omega$, we can choose a $0<\sigma<\frac{1}{10}$ such that $B_\sigma \subset \subset \Omega$, where $B_\sigma=\{x \in \Omega: \; |x|<\sigma\}$. The proof is divided to 2 steps:

(1) We show that there exists $C>0$ such that
\begin{equation}
\label{3.8-1}
u(x) \leq C |x|^{-\frac{4+\tau}{p-1}}, \;\;\; v(x) \leq C |x|^{\alpha-\frac{2(p+1)+\tau}{p-1}},
\;\;\; 0<|x|<\sigma/2,
\end{equation}
where $v(x)=-|x|^\alpha \Delta u (x)$. Note that $u>0$, $v>0$ in $B_{\sigma/2}$.

(2) Using the estimates obtained in \eqref{3.8-1} and Harnack's inequality of systems,
we obtain our conclusion.

{\it Step 1.} For any $x_0 \in B_\sigma$ with $0<|x_0|<\sigma/2$, we denote
$$R=\frac{|x_0|}{2}$$
and observe that, for all $y \in B_1$, $\frac{|x_0|}{2}<|x_0+Ry|<\frac{3 |x_0|}{2}$, so that
$x_0+R y \in B_\sigma$. Let us define
$$U(y)=R^{\frac{4+\tau}{p-1}} u(x_0+Ry), \;\;\; V(y)=R^{\frac{2(p+1)+\tau}{p-1}-\alpha} v(x_0+Ry).$$
Then $(U,V)$ satisfies the system
\begin{equation}
\label{3.8-2}
\left \{ \begin{array}{ll} -\Delta U=|y+\frac{x_0}{R}|^{-\alpha} V \;\; &\mbox{in $B_1$},\\
-\Delta V=|y+\frac{x_0}{R}|^l U^p \;\; &\mbox{in $B_1$}.
\end{array} \right.
\end{equation}
Note that $|y+\frac{x_0}{R}| \in [1,3]$ for all $y \in {\overline {B_1}}$.

We now claim that there exists $C>0$ depending only on $p, N', \tau$, independent of $x_0$, such that
\begin{equation}
\label{3.8-3}
U^{\frac{p-1}{4}} (y)+V^{\frac{p-1}{2 (p+1)}} (y) \leq C (1+\mbox{dist}^{-1} (y, \partial B_1)), \;\; y \in B_1.
\end{equation}
Arguing by contradiction, we suppose that there exist sequences $x_k \in B_{\sigma/2} \backslash \{0\}$, points $z_k \in B_1$ and $(U_k,V_k)$
satisfying
\begin{equation}
\label{3.8-4}
\left \{ \begin{array}{ll} -\Delta U_k=|y+\frac{x_k}{R_k}|^{-\alpha} V_k \;\; &\mbox{in $B_1$},\\
-\Delta V_k=|y+\frac{x_k}{R_k}|^l U_k^p \;\; &\mbox{in $B_1$},
\end{array} \right.
\end{equation}
where $R_k=\frac{1}{2}|x_k|$ such that
the functions
$$M_k=|U_k|^{\frac{p-1}{4}}+|V_k|^{\frac{p-1}{2(p+1)}}$$
satisfy
$$M_k (z_k)>2k (1+\mbox{dist}^{-1} (z_k, \partial B_1))>2k \mbox{dist}^{-1} (z_k, \partial B_1).$$
By  doubling lemma in \cite{PQS}, there exists $y_k \in B_1$ such that
$$M_k (y_k) \geq M_k (z_k), \;\; M_k (y_k)>2k \mbox{dist}^{-1} (y_k, \partial B_1),$$
and
\begin{equation}
\label{3.8-5}
M_k (y) \leq 2 M_k (y_k), \;\; \mbox{for all $y$ such that $|y-y_k| \leq k M_k^{-1} (y_k)$}.
\end{equation}
We have
\begin{equation}
\label{3.8-6}
\lambda_k:=M_k^{-1} (y_k) \to 0\;\; \mbox{as $k \to \infty$},
\end{equation}
due to $M_k (y_k) \geq M_k (z_k)>2k$.

Next we let
$${\tilde U}_k (z)=\lambda_k^{\frac{4}{p-1}} U_k (y_k+\lambda_k z), \;\;
{\tilde V}_k (z)=\lambda_k^{\frac{2(p+1)}{p-1}} V_k (y_k+\lambda_k z).$$
Note that $|{\tilde U}_k|^{\frac{p-1}{4}} (0)+|{\tilde V}_k|^{\frac{p-1}{2(p+1)}} (0)=1$,
\begin{equation}
\label{3.8-7}
\Big[|{\tilde U}_k|^{\frac{p-1}{4}}+|{\tilde V}_k|^{\frac{p-1}{2(p+1)}} \Big] (z) \leq 2, \;\; |z| \leq k,
\end{equation}
due to \eqref{3.8-5} and we see that $({\tilde U}_k, {\tilde V}_k)$ satisfies
\begin{equation}
\label{3.8-8}
\left \{ \begin{array}{ll} -\Delta {\tilde U}_k=|y_k+\lambda_k z+\frac{x_k}{R_k}|^{-\alpha} {\tilde V}_k, \;\; &|z| \leq k,\\
-\Delta {\tilde V}_k=|y_k+\lambda_k z+\frac{x_k}{R_k}|^l {\tilde U}_k^p, \;\; &|z| \leq k.
\end{array} \right.
\end{equation}

Now, for each $A>0$ and $1<q<\infty$, by \eqref{3.8-7}, \eqref{3.8-8} and interior elliptic $L^q$ estimates, the sequence $({\tilde U}_k, {\tilde V}_k)$ is uniformly bounded in $W^{2, q} (B_A) \times W^{2, q} (B_A)$. Note that
$$\Big|y_k+\lambda_k z+\frac{x_k}{R_k} \Big| \in [1,3] \;\; \mbox{for all $k$}.$$
Using standard embeddings and interior elliptic Schauder estimates, after extracting subsequences
(still denoted by $\{y_k\}$, $\{\frac{x_k}{R_k}\}$ and $\{({\tilde U}_k, {\tilde V}_k)\}$), we may assume $y_k \to y_0 \in {\overline {B_1}}$, $\frac{x_k}{R_k} \to {\tilde x} \in \partial B_2$,
$({\tilde U}_k, {\tilde V}_k) \to ({\tilde U}, {\tilde V})$ in $C^2_{loc} (\R^N) \times C^2_{loc} (\R^N)$. It follows that $({\tilde U}, {\tilde V}) \geq 0$ is a classical solution of
\begin{equation}
\label{3.8-9}
\left \{ \begin{array}{ll} -\Delta {\tilde U}=|y_0+{\tilde x}|^{-\alpha} {\tilde V}, \;\;
&z \in \R^N,\\
-\Delta {\tilde V}=|y_0+{\tilde x}|^l {\tilde U}^p, \;\; & z \in \R^N.
\end{array} \right.
\end{equation}
\eqref{3.8-9} implies that ${\tilde U} \in C^4 (\R^N \backslash \{0\}) \cap C^2 (\R^N)$ satisfies
\begin{equation}
\label{3.8-10}
\Delta^2 {\tilde U}=C {\tilde U}^p \;\; \mbox{in $\R^N \backslash \{0\}$},
\end{equation}
where $0<C=|y_0+{\tilde x}|^\tau<\infty$ and ${\tilde U}^{\frac{p-1}{4}} (0)+{\tilde V}^{\frac{(p-1)}{2(p+1)}} (0)=1$. Since $1<p<p_s \leq \frac{N+4}{N-4}$, this contradicts the Liouville-type result in \cite{Lin} and concludes our claim \eqref{3.8-3}. It also implies that
$U(0)+V(0) \leq C$, hence
$$u(x_0) \leq C |x_0|^{-\frac{4+\tau}{p-1}}, \;\; v(x_0) \leq C |x_0|^{\alpha-\frac{2(p+1)+\tau}{p-1}},$$
which completes the proof of step 1.

{\it Step 2.} Let $\sigma_0=\sigma/4$. We make the transform:
\begin{equation}
\label{3.11}
w (t, \omega)=r^{\frac{N'-4}{2}} u(r \omega), \;\; z(t, \omega)=r^{\frac{N-\alpha}{2}} v(r \omega), \;\;\;  t=-\ln r.
\end{equation}
Note that $u>0$ and $v>0$ in $B_{\sigma_0} \subset \subset \Omega$.
Then $(w(t, \omega), z(t, \omega))$ satisfies
\begin{equation}
\label{3.12}
\left \{ \begin{array}{ll} w_{tt}+\Delta_{S^{N-1}} w+(\alpha-2) w_t-\frac{(N-\alpha) (N'-4)}{4} w +z=0, \;\; & (t, \omega) \in I \times S^{N-1},\\
z_{tt}+\Delta_{S^{N-1}} z+(2-\alpha) z_t-\frac{(N-\alpha) (N'-4)}{4} z+e^{-p_* t} w^p=0, \;\; &
(t, \omega) \in I \times S^{N-1},
\end{array} \right.
\end{equation}
where $I=(-\ln \sigma_0, \infty)$ since $B_{\sigma_0} \subset \subset \Omega$.
We can write \eqref{3.12} in the form:
\begin{equation}
\label{3.13}
\left \{ \begin{array}{ll} w_{tt}+\Delta_{S^{N-1}} w+(\alpha-2) w_t-\frac{(N-\alpha) (N'-4)}{4} w +z=0, \;\; & (t, \omega) \in I \times S^{N-1},\\
z_{tt}+\Delta_{S^{N-1}} z+(2-\alpha) z_t-\frac{(N-\alpha) (N'-4)}{4} z+a(t, \omega) w=0, \;\; &
(t, \omega) \in I \times S^{N-1},
\end{array} \right.
\end{equation}
with $a(t,\omega)=e^{-p_* t} w^{p-1}$.
It is known from \eqref{3.8-1} that
$$w(t, \omega) \leq C e^{(\frac{4+\tau}{p-1}-\frac{N'-4}{2})t} \;\;\; \mbox{for $(t, \omega)
\in I \times S^{N-1}$}.$$
Therefore,
$$e^{-p_*t} w^{p-1} \leq C e^{[(\frac{4+\tau}{p-1}-\frac{N'-4}{2})(p-1)-p_*]t}=C \;\;\;\mbox{for $(t, \omega)
\in I \times S^{N-1}$}.$$
Note that
$$\Big(\frac{4+\tau}{p-1}-\frac{N'-4}{2} \Big)(p-1)-p_*=0.$$
Since $|a(t, \omega)| \leq C$ for $t \in (-\ln \sigma_0, \infty) \times S^{N-1}$,
by  Harnack's inequality (see Theorem 2.1 of \cite{AGM}) of systems over
$[L-1,L] \times S^{N-1}$ with $L-1>-\ln \sigma_0$, there exists $C>0$ independent of $t \in [-\ln \sigma_0+1, \infty)$ such that
\begin{equation}
\label{3.14}
\max_{S^{N-1}} \Big \{w (t, \cdot), z(t, \cdot) \Big\} \leq C \min_{S^{N-1}} \Big \{w(t, \cdot), z(t, \cdot) \Big \} \;\;\; \mbox{for $t \geq -\ln \sigma_0+1$}.
\end{equation}

Let ${\overline w} (t)=\frac{1}{|S^{N-1}|} \int_{S^{N-1}} w(t, \omega) d \omega$ and
${\overline z} (t)=\frac{1}{|S^{N-1}|} \int_{S^{N-1}} z(t, \omega) d \omega$.
It follows from \eqref{3.14} that there exist $0<c_1<c_2$ and $0<c_3<c_4$ such that
$$c_1 {\overline w} \leq w \leq c_2 {\overline w} \;\; \mbox{for $(t, \omega) \in (-\ln \sigma_0+1, \infty) \times S^{N-1}$}, $$
$$c_3 {\overline z} \leq z \leq c_4 {\overline z} \;\; \mbox{for $(t, \omega) \in (-\ln \sigma_0+1, \infty) \times S^{N-1}$}. $$
These imply that
\begin{equation}
\label{3.15}
c_1 {\overline u} \leq u \leq c_2 {\overline u}, \;\;\;c_3 {\overline v} \leq v \leq c_4 {\overline v}  \;\;\; \mbox{for $x \in B_{e^{-1} \sigma_0} \backslash \{0\}$},
\end{equation}
where ${\overline u} (r):=\frac{\int_{\partial B_r} u (r, \omega) d \omega}{|\partial B_r|}$,
${\overline v} (r):=\frac{\int_{\partial B_r} v(r, \omega) d \omega}{|\partial B_r|}$.

On the other hand, it follows from $u \in H_\alpha (\Omega)$ and the embedding given in Proposition \ref{p1.2} that
\begin{equation}
\label{3.15-1}
\int_{B_{e^{-1} \sigma_0}} |x|^l |u|^{\frac{2(N'+\tau)}{N'-4}} dx<\infty, \;\;\; \int_{B_{e^{-1} \sigma_0}} |x|^{-\alpha} v^2 dx<\infty
\end{equation}
provided that \eqref{1.1} holds with $\alpha \geq \frac{N-4}{4} \tau$.
This and \eqref{3.15} imply
\begin{equation}
\label{3.16}
\int_{B_{e^{-1} \sigma_0}} |x|^l {\overline u}^{\frac{2(N'+\tau)}{N'-4}} dx<\infty,
\;\;\; \int_{B_{e^{-1} \sigma_0}} |x|^{-\alpha} {\overline v}^2 dx<\infty.
\end{equation}
Then, arguments similar to those in the proof of \eqref{3.6} imply that there exists $0<\sigma_1<e^{-1} \sigma_0$ such that
\begin{equation}
\label{3.17}
{\overline u} (r)=o \Big(r^{-\frac{N'-4}{2}} \Big), \;\;\; {\overline v} (r)=o \Big(r^{-\frac{N-\alpha}{2}} \Big) \;\;\; \forall r \in (0, \sigma_1).
\end{equation}
Note that
$${\overline w} (t)=r^{\frac{N'-4}{2}} {\overline u}(r), \;\;\;{\overline z} (t)=r^{\frac{N-\alpha}{2}} {\overline v}(r), \;\;\;  t=-\ln r.$$
\eqref{3.17} implies that
\begin{equation}
\label{3.18}
{\overline w}(t) \to 0, \;\;\; {\overline z}(t) \to 0 \;\;\;  \mbox{as $t \to \infty$}.
\end{equation}
On the other hand,
$({\overline w}, {\overline z})$ satisfies
\begin{equation}
\label{3.19}
\left \{ \begin{array}{ll} {\overline w}_{tt}+(\alpha-2) {\overline w}_t-\frac{(N-\alpha) (N'-4)}{4} {\overline w} +{\overline z}=0, \;\; & t \in I,\\
{\overline z}_{tt}+(2-\alpha) {\overline z}_t-\frac{(N-\alpha) (N'-4)}{4} {\overline z}+e^{-p_* t} {\overline {w^p}}=0, \;\; &
t \in I,
\end{array} \right.
\end{equation}
and due to \eqref{3.14},
$$c_5 {\overline w}^p \leq {\overline {w^p}} \leq c_6 {\overline w}^p \;\;
\mbox{for $(t, \omega) \in (-\ln \sigma_0+1, \infty) \times S^{N-1}$}.$$
By \eqref{3.18}, it follows from the ODE theory on perturbation of linear systems (see \cite{Ha})
that, there exist ${\tilde \tau}_1>0$ and ${\tilde \tau}_2>0$ such that
\begin{equation}
\label{3.20}
{\overline w}(t)=({\tilde \tau}_1+o(1)) e^{-\frac{N'-4}{2} t} \;\;\;{\overline z}(t)=({\tilde \tau}_2+o(1)) e^{-\frac{N-\alpha}{2} t} \;\;\;  \mbox{as $t \to \infty$}.
\end{equation}
In view of \eqref{3.14}, \eqref{3.20} implies
$$w(t, \omega)=O(e^{-\frac{N'-4}{2} t}), \;\;\; z(t, \omega)=O( e^{-\frac{N-\alpha}{2} t}) \;\;  \mbox{as $t \to \infty$ uniformly for $\omega \in S^{N-1}$}.$$
These imply that there exist $\tau_1>0$, $\tau_2>0$ such that
\begin{equation}
\label{3.21}
u(x) \to \tau_1, \;\;\; v(x) \to \tau_2 \;\; \mbox{as $|x| \to 0$}.
\end{equation}
The proof of this lemma is completed. \qed

\begin{lem}
\label{l3.3}
Let $\Omega \subset \R^N$ be a bounded smooth domain, but not a ball, with $0 \in \Omega$. Assume also that $N \geq 5$ and \eqref{1.1} holds with $\alpha<\frac{(N-4)}{4} \tau$. Let $u \in H_\alpha (\Omega)$ be a weak solution of (Q) for $1<p<\frac{N+4}{N-4}$ and $u>0$, $-|x|^\alpha \Delta u>0$ in $\Omega \backslash \{0\}$.
Then
$u \in C^4 (\Omega \backslash \{0\}) \cap C^0 ({\overline \Omega})$, $|x|^\alpha \Delta u \in C^0 ({\overline \Omega})$.
\end{lem}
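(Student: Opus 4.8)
The plan is to follow the proof of Lemma~\ref{l3.2} essentially line by line, the only structural change being that the sharp embedding $H_\alpha(\Omega)\hookrightarrow L_l^{p_s+1}(\Omega)$ used there (available only for $\alpha\ge\frac{N-4}{4}\tau$) must be replaced by the embedding $H_\alpha(\Omega)\hookrightarrow L_l^{q}(\Omega)$, $q\in[1,\frac{2N}{N-4}]$, of Proposition~\ref{p1.3}. Since $1<p<\frac{N+4}{N-4}$, the boot-strap argument gives $u\in C^4(\Omega\setminus\{0\})$ (this is exactly the range that makes it run). Writing $v=-|x|^\alpha\Delta u>0$, I carry out Step~1 of Lemma~\ref{l3.2} unchanged: its doubling-lemma blow-up uses only $1<p<\frac{N+4}{N-4}$ and the Liouville theorem of \cite{Lin}, so it again yields $u(x)\le C|x|^{-\frac{4+\tau}{p-1}}$ and $v(x)\le C|x|^{\alpha-\frac{2(p+1)+\tau}{p-1}}$ for $0<|x|<\sigma/2$. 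After the Emden--Fowler substitution \eqref{3.11}, $(w,z)$ satisfies \eqref{3.12}--\eqref{3.13}; since $p<\frac{N+4}{N-4}<p_s$ one has $p_*>0$ and $|a(t,\omega)|=e^{-p_*t}w^{p-1}\le C$, so the Harnack inequality for systems of \cite{AGM} gives \eqref{3.14}, hence the two-sided comparison \eqref{3.15} between $u,v$ and their spherical averages $\overline u,\overline v$ and the averaged ODE system \eqref{3.19} for $(\overline w,\overline z)$.

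The one genuinely new point is to prove $\overline w(t)\to0$ and $\overline z(t)\to0$ as $t\to\infty$ without the $L_l^{p_s+1}$-interpolation of \eqref{3.16}--\eqref{3.17}. For $\overline z$ nothing changes: $u\in H_\alpha(\Omega)$ gives $\int_0^R r^{N-1-\alpha}\overline v^2\,dr<\infty$, and $\overline v$ is decreasing near $0$ since $(r^{N-1}\overline v')'=-r^{N'+\tau-1}\overline{u^p}\le0$ while $r^{N-1}\overline v'(r)$ is proportional to $-\int_{B_r}|x|^l u^p\,dx\to0$ (here $u^p\in L_l^1$ near $0$ because $p<\frac{2N}{N-4}$), so $r^{N-\alpha}\overline v^2(r)=o(1)$ and $\overline z(t)=r^{\frac{N-\alpha}{2}}\overline v(r)=o(1)$. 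For $\overline w$ I use the ODE rather than monotonicity. With $\overline z\to0$, the first equation of \eqref{3.19} is a linear second-order ODE for $\overline w$ with forcing $-\overline z\to0$ and characteristic roots $-\frac{N'-4}{2}<0<\frac{N-\alpha}{2}$, so $\overline w(t)=c_1e^{-\frac{N'-4}{2}t}+c_2e^{\frac{N-\alpha}{2}t}+o(1)$ for suitable $c_1,c_2$. If $c_2\ne0$ then $\overline w(t)\sim c_2e^{\frac{N-\alpha}{2}t}$, i.e. $\overline u(r)\sim c_2 r^{-(N-2)}$ near $0$; but $|\overline u'(r)|^2\le\frac{1}{|\partial B_r|}\int_{\partial B_r}|\nabla u|^2\,dS$ and $\int_\Omega|x|^{\alpha-2}|\nabla u|^2\,dx<\infty$ by Remark~\ref{r2.1}, which forces $\int_0 r^{N-3+\alpha}|\overline u'(r)|^2\,dr<\infty$, while $\overline u(r)\sim c_2 r^{-(N-2)}$ makes this integral equal $\int_0 r^{\alpha-N-1}\,dr=+\infty$ (recall $\alpha<N$). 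Hence $c_2=0$ and $\overline w(t)\to0$.

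Once $\overline w(t),\overline z(t)\to0$, the rest is identical to Lemma~\ref{l3.2}: the ODE theory on perturbed linear systems (\cite{Ha}) applied to \eqref{3.19} gives $\overline w(t)=(\tilde\tau_1+o(1))e^{-\frac{N'-4}{2}t}$ and $\overline z(t)=(\tilde\tau_2+o(1))e^{-\frac{N-\alpha}{2}t}$ with $\tilde\tau_1,\tilde\tau_2>0$, and \eqref{3.14} promotes this to $w(t,\omega)=O(e^{-\frac{N'-4}{2}t})$, $z(t,\omega)=O(e^{-\frac{N-\alpha}{2}t})$ uniformly in $\omega$; translating back, $u(x)\to\tau_1>0$ and $v(x)\to\tau_2>0$ as $|x|\to0$, which with $u\in C^4(\Omega\setminus\{0\})$ gives $u\in C^4(\Omega\setminus\{0\})\cap C^0(\overline\Omega)$ and $|x|^\alpha\Delta u\in C^0(\overline\Omega)$. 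I expect the main obstacle to be precisely the step $\overline w\to0$: because the embedding available here is strictly weaker than in Lemma~\ref{l3.2} the interpolation trick is lost, and one must instead verify that the unstable exponent $\frac{N-\alpha}{2}$ of the linearized equation is excluded --- for which the weighted Rellich-type bound $\int_\Omega|x|^{\alpha-2}|\nabla u|^2\,dx<\infty$ (which holds with no arithmetic restriction linking $\alpha,\tau,p$) is the right tool, not any $L_l^q$ estimate.
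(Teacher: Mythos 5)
Your proposal is correct and, for the one step that genuinely differs from Lemma~\ref{l3.2} (showing $\overline w(t)\to0$ as $t\to\infty$), it uses a noticeably different device than the paper does. The paper's proof is very short: it introduces the auxiliary weight $l_1$ from the proof of Proposition~\ref{p1.3} (chosen so that $\tau_1=l_1-\alpha$ satisfies $\frac{N-4}{4}\tau_1=\alpha$, hence $N+l_1=\frac{N(N'-4)}{N-4}$), notes that $H_\alpha(\Omega)\hookrightarrow L_{l_1}^{2N/(N-4)}(\Omega)$, transfers this to $\overline u$ via Harnack, and then, since $\overline u$ is decreasing, runs the same pointwise argument as in \eqref{3.6} to conclude $\overline u(r)=o(r^{-(N'-4)/2})$. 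You instead keep the weight $l$ fixed, treat the first equation of \eqref{3.19} as a scalar linear ODE for $\overline w$ with forcing $-\overline z\to0$, observe that its characteristic roots are exactly $-\frac{N'-4}{2}<0<\frac{N-\alpha}{2}$, and rule out the unstable mode $c_2 e^{\frac{N-\alpha}{2}t}$ by showing it would force $\overline u(r)\sim c_2 r^{-(N-2)}$ and therefore $\int_0 r^{N-3+\alpha}|\overline u'|^2\,dr=\infty$, contradicting the weighted Hardy--Rellich bound $\int_\Omega|x|^{\alpha-2}|\nabla u|^2\,dx<\infty$ of Remark~\ref{r2.1}. Both routes work; the paper's is shorter but needs to reach back to the auxiliary-exponent trick of Proposition~\ref{p1.3}, whereas yours is self-contained once $\overline z\to0$ is known and makes explicit \emph{why} the rate $r^{-(N'-4)/2}$ is the dividing line --- it is the stable root of the linearized ODE, and the unstable root is killed by a Rellich-type energy bound that holds for every $\alpha\in(4-N,N)$ without any relation between $\alpha$ and $\tau$.

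One small gap worth closing: from $\overline w(t)\sim c_2 e^{\frac{N-\alpha}{2}t}$ you jump directly to ``$\overline u(r)\sim c_2 r^{-(N-2)}$ makes $\int_0 r^{N-3+\alpha}|\overline u'|^2\,dr=\int_0 r^{\alpha-N-1}\,dr=\infty$,'' which tacitly uses $\overline u'(r)\sim -c_2(N-2)r^{-(N-1)}$. The asymptotics of $\overline u$ alone do not give the derivative asymptotics. Two standard fixes: (a) the variation-of-parameters representation used to split off $c_2 e^{\frac{N-\alpha}{2}t}$ also yields $\overline w_t(t)\sim c_2\frac{N-\alpha}{2}e^{\frac{N-\alpha}{2}t}$, so $\overline u'$ has the stated decay; or (b) avoid derivative asymptotics entirely by using that $\overline u$ is decreasing (which you already use for $\overline v$, and which follows from $-(r^{N-1}\overline u')'=r^{N-1}\overline{|x|^{-\alpha}v}>0$ together with $r^{N-1}\overline u'(r)\to0$): on each dyadic annulus, Cauchy--Schwarz gives
\[
\int_{r/2}^{r} s^{N-3+\alpha}|\overline u'(s)|^2\,ds \ \ge\ C\,r^{N-3+\alpha}\,\frac{\bigl(\overline u(r/2)-\overline u(r)\bigr)^2}{r}\ \ge\ C\,r^{\alpha-N},
\]
and the sum over dyadic scales diverges since $\alpha<N$. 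Either way the contradiction holds, so your argument is sound.

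Finally, your remark about the ``one genuinely new point'' is accurate: the bound from Step~1, $u\le C|x|^{-(4+\tau)/(p-1)}$, translates to $\overline w(t)\le Ce^{\kappa t}$ with $\kappa=\frac{4+\tau}{p-1}-\frac{N'-4}{2}>0$ for $p<p_s$, so that alone does \emph{not} give $\overline w\to0$, and the extra information (whether the $L^{2N/(N-4)}_{l_1}$ bound or the weighted gradient bound) is really needed.
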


{\bf Proof.} Since $\alpha<\frac{N-4}{2} \tau$, we see that $\frac{N+4}{N-4}<p_s$.
The conclusions can be obtained by arguments exactly the same as those in the proof of Lemma \ref{l3.2}.
The only thing we need to show is
\begin{equation}
\label{3.21-1}
{\overline u} (r)=o \Big(r^{-\frac{N'-4}{2}} \Big) \;\; \mbox{for $r$ near 0}.
\end{equation}
To see \eqref{3.21-1}, we choose $l_1$ and $\tau_1$ as in the proof of
Proposition \ref{p1.3},
then
$$\int_{B_\sigma} |x|^{l_1} u^{\frac{2(N'+\tau_1)}{N'-4}} dx \; \Big(:=\int_{B_\sigma} |x|^{l_1} u^{\frac{2N}{N-4}} dx \Big)  <\infty.$$
By Harnack's inequality as in the proof of Lemma \ref{l3.2}, we have
$$\int_{B_\sigma} |x|^{l_1} {\overline u}^{\frac{2(N'+\tau_1)}{N'-4}} dx<\infty.$$
Now, \eqref{3.21-1} can be obtained by similar arguments  to those in the proof of \eqref{3.6}.
This completes the proof of this lemma.
\qed

\begin{cor}
\label{c3.4}
Let $\Omega \subset \R^N$ be a bounded smooth domain with $0 \in \Omega$. Assume that \eqref{1.1} holds and $\varphi_1 \in H_\alpha (\Omega)$ is the first eigenfunction corresponding to
the first eigenvalue $\lambda_1$ of the problem
\begin{equation}
\label{3.22}
\left \{ \begin{array}{rl} \Delta (|x|^\alpha \Delta \phi (x))=\lambda |x|^l \phi (x), \;\;& x \in \Omega,\\
\phi (x)=0, \;\;\; \Delta \phi (x)=0, \;\; &x \in \partial \Omega.
\end{array} \right.
\end{equation}
Then $\varphi_1 \in C^4 (\Omega \backslash \{0\}) \cap C^0 ({\overline \Omega})$, $|x|^\alpha \Delta \varphi_1 \in C^0({\overline \Omega})$.
\end{cor}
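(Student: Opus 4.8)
The plan is to reduce the regularity statement for the first eigenfunction $\varphi_1$ to the already–established removable–singularity lemmas, Lemma \ref{l3.2} and Lemma \ref{l3.3}. First I would recall the variational characterization of $\lambda_1$: it is the infimum of $\int_\Omega |x|^\alpha (\Delta\phi)^2\,dx$ over $\phi\in H_\alpha(\Omega)$ with $\int_\Omega |x|^l\phi^2\,dx=1$, and this infimum is attained by the continuity and compactness of the embedding $H_\alpha(\Omega)\hookrightarrow L^2_l(\Omega)$, which holds for both ranges of $\alpha$ (Propositions \ref{p1.2}--\ref{p1.4}, since $2<\min\{\tfrac{2N}{N-4},p_s+1\}$). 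So a minimizer $\varphi_1\in H_\alpha(\Omega)$ exists and is a weak solution of \eqref{3.22} with $\lambda=\lambda_1$.

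Next I would observe that \eqref{3.22} is, modulo the constant $\lambda_1$, precisely the equation \eqref{2.15}/(Q) with exponent $p=1$: writing $v=-|x|^\alpha\Delta\varphi_1$, the pair $(\varphi_1,v)$ satisfies the system $-\Delta\varphi_1=|x|^{-\alpha}v$, $-\Delta v=\lambda_1|x|^l\varphi_1$ in $\Omega$ with $\varphi_1=v=0$ on $\partial\Omega$. Exactly as in the proof of Theorem \ref{t2.1}(i) one checks that $\Delta\varphi_1$ does not change sign — if it did, replacing $\varphi_1$ by the solution $w$ of $-\Delta w=|\Delta\varphi_1|$ would strictly lower the Rayleigh quotient, contradicting minimality — and then regularity of $-\Delta$ together with the strong maximum principle gives, up to replacing $\varphi_1$ by $-\varphi_1$, that $\varphi_1>0$ and $v=-|x|^\alpha\Delta\varphi_1>0$ in $\Omega\backslash\{0\}$.

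Now I would invoke the removable–singularity lemmas. The linear case $p=1$ is covered: when $\alpha\geq\frac{N-4}{4}\tau$ one has $1<p_s$ (indeed $p_s>1$ always under \eqref{1.1}), so $p=1\in(1,p_s)$ is borderline — here I would note that Lemma \ref{l3.2} in fact only uses $p<p_s$ to get $p_*>0$ and $p<\frac{N+4}{N-4}$, both of which hold trivially at $p=1$ where the nonlinear term $e^{-p_*t}w^p$ is replaced by the genuinely linear $\lambda_1 w$, so the ODE/Hardy–type asymptotic analysis and the Harnack argument go through verbatim (the doubling–lemma blow-up in Step 1 is even easier, as the limiting equation $\Delta^2\tilde U=C\tilde U$ on $\R^N$ has no nonnegative nontrivial bounded solution). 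The same remark applies to Lemma \ref{l3.3} when $\alpha<\frac{N-4}{4}\tau$. For the ball $\Omega=B$ and the radial first eigenfunction one uses Lemma \ref{l3.1} instead. In each case the conclusion is $\varphi_1\in C^4(\Omega\backslash\{0\})\cap C^0(\overline\Omega)$ and $|x|^\alpha\Delta\varphi_1\in C^0(\overline\Omega)$, which is the assertion.

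The main obstacle — really the only point needing care — is the borderline exponent $p=1$: one must confirm that nothing in the proofs of Lemmas \ref{l3.1}--\ref{l3.3} genuinely required the strict inequality $p>1$ (as opposed to $p\geq 1$), in particular that the contradiction with the Liouville theorem of \cite{Lin} and the perturbation–of–linear–systems step still apply. In fact the $p=1$ situation is structurally simpler, since the second equation becomes linear and the ``nonlinear'' coefficient $a(t,\omega)=e^{-p_*t}w^{p-1}$ degenerates to the bounded function $\lambda_1 e^{-p_* t}$ (indeed $p_*=\frac{N'+4+2\tau-(N'-4)}{2}=\frac{8+2\tau}{2}=4+\tau>0$), so the Harnack and asymptotic arguments are unaffected. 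Hence the corollary follows.
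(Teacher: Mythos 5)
Your proposal is correct and follows essentially the same route as the paper: existence of a minimizer via the compact embedding $H_\alpha(\Omega)\hookrightarrow L^2_l(\Omega)$, positivity via the argument of Theorems~\ref{t2.1}--\ref{t2.2}, and regularity via the removable--singularity lemmas, with the crucial observation (also the paper's) that for $p=1$ the coefficient $a(t,\omega)=e^{-p_*t}$ with $p_*=4+\tau>0$ is already bounded so Harnack applies. One small caveat: your parenthetical that ``the doubling-lemma blow-up in Step~1 is even easier'' at $p=1$ is off --- the rescaling $U(y)=R^{4/(p-1)}u(x_0+Ry)$ in that step degenerates when $p=1$ --- but this is immaterial, because Step~1 of Lemma~\ref{l3.2} exists solely to establish boundedness of $a(t,\omega)$, which for $p=1$ holds automatically from $p_*>0$ exactly as you (and the paper) note.
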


{\bf Proof.}  The existence of a nontrivial $\varphi_1 \in
H_\alpha (\Omega)$ as a minimizer of
$$\inf_{\phi \in H_\alpha (\Omega) \backslash \{0\}}
\frac{\int_\Omega |x|^\alpha (\Delta \phi)^2}{\int_\Omega |x|^l |\phi|^2 dx}$$
can be obtained from the compact embedding: $H_\alpha \hookrightarrow L_l^2 (\Omega)$. We need to consider
two cases respectively: \eqref{1.1} holds with $\alpha \geq \frac{N-4}{4} \tau$ and \eqref{1.1} holds with $\alpha<\frac{N-4}{4} \tau$. Similar arguments  to those in the proof of Theorems \ref{t2.1} and \ref{t2.2} imply that $\varphi_1>0$, $-|x|^\alpha \Delta \varphi_1>0$ in $\Omega$.
Other properties can be also obtained. Note that we can use Harnack's inequality here, since in our case here
$p_*=4+\tau>0$ (note that $p=1$) and
$$a(t, \omega)=e^{-(4+\tau)t}, \;\;\; (t, \omega) \in I \times S^{N-1},$$
which implies that $|a(t, \omega)| \leq C$ for $(t, \omega) \in I \times S^{N-1}$.
\qed

\begin{cor}
\label{c3.5}
Let $B=\{x \in \R^N: \; |x|<R\}$ be a ball in $\R^N$ centered at 0. Assume that \eqref{1.1} holds. Then
the first eigenfunction $\varphi_1$ corresponding to the first eigenvalue $\lambda_1$ of the
eigenvalue problem
\begin{equation}
\label{3.23}
\left \{ \begin{array}{rl} \Delta (|x|^\alpha \Delta \phi (x))=\lambda |x|^l \phi (x), \;\;& x \in B,\\
\phi (x)=0, \;\; \Delta \phi (x)=0, \;\; &x \in \partial B
\end{array} \right.
\end{equation}
satisfies that $\varphi_1 \in C^4 (B \backslash \{0\}) \cap C^0 ({\overline B})$, $|x|^\alpha \Delta \varphi_1 \in C^0 ({\overline B})$,
$\varphi_1 (x)=\varphi_1 (r)$, $\varphi_1 (r)>0$ and $\varphi_1'(r)<0$ for $r \in (0, R)$.
\end{cor}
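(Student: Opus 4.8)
The plan is to run, in the radial setting on the ball, the same three–stage scheme used for Theorems \ref{t2.1}--\ref{t2.3} and Lemma \ref{l3.1}: produce $\varphi_1$ variationally, fix its sign by the maximum principle, and then remove the singularity at $0$; the only genuinely new output is the strict monotonicity $\varphi_1'(r)<0$.

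First I would obtain $\varphi_1$. Since $p_s>1$ we have $2\in[1,p_s+1)$, so Proposition \ref{p1.4} gives that $H_{\alpha,\mbox{{\tiny rad}}}(B)\hookrightarrow L^2_l(B)$ is compact; hence the infimum
$$\lambda_1=\inf_{\phi\in H_{\alpha,\mbox{{\tiny rad}}}(B)\setminus\{0\}}\frac{\int_B|x|^\alpha(\Delta\phi)^2\,dx}{\int_B|x|^l\phi^2\,dx}$$
is attained by some $\varphi_1\neq 0$, which after normalization is a weak radial solution of \eqref{3.23}. Writing $v=-|x|^\alpha\Delta\varphi_1$, the pair $(\varphi_1,v)$ solves the cooperative system $-\Delta\varphi_1=|x|^{-\alpha}v$, $-\Delta v=\lambda_1|x|^l\varphi_1$ in $B$, with $\varphi_1=v=0$ on $\partial B$ (the boundary condition $\Delta\varphi_1=0$ is exactly $v=0$ on $\partial B$). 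Exactly as in the proof of Theorem \ref{t2.1} with $p=1$, one shows $\Delta\varphi_1$ does not change sign (otherwise replacing $\varphi_1$ by the solution of $-\Delta w=|\Delta\varphi_1|$, $w|_{\partial B}=0$, lowers the quotient), and then the strong maximum principle applied to the system yields, up to a sign, $\varphi_1>0$ and $v>0$ in $B\setminus\{0\}$. Since a one–signed eigenfunction of such a cooperative system is necessarily associated with the principal eigenvalue and the principal eigenfunction is simple, $\lambda_1$ and $\varphi_1$ are indeed the first eigenvalue and eigenfunction of \eqref{3.23}; rotational invariance of $B$ together with simplicity re-confirms $\varphi_1(x)=\varphi_1(|x|)$.

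Next I would transfer the removable–singularity analysis of Lemma \ref{l3.1} to the present linear equation, i.e. repeat that proof with $u^p$ replaced by $\lambda_1 u$. The relevant exponent becomes $p_*=\tfrac{[N'+4+2\tau]-[N'-4]\cdot 1}{2}=4+\tau>0$ by \eqref{1.1} (this is the remark already used in Corollary \ref{c3.4}), so $e^{-p_*t}\to 0$; the embedding bounds corresponding to \eqref{3.2}--\eqref{3.6} hold with $p=1$, $r^{N-1}\varphi_1'(r)\to 0$ and $r^{N-1}v'(r)\to 0$ as $r\to 0$, and the perturbed–linear–system/ODE argument (indeed easier here since the perturbation $\lambda_1 e^{-p_*t}w$ is linear) gives $\lim_{r\to 0}\varphi_1(r)=\tau_\varphi\in(0,\infty)$ and $\lim_{r\to 0}v(r)=\tau_v\in(0,\infty)$. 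Interior regularity then upgrades $\varphi_1$ to $C^4(B\setminus\{0\})$, and these finite positive limits give $\varphi_1\in C^0(\overline B)$ and $|x|^\alpha\Delta\varphi_1\in C^0(\overline B)$.

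Finally, for the monotonicity I would use the radial form of the system as in \eqref{3.1}: with $v(r)=-r^\alpha\Delta\varphi_1(r)>0$ on $(0,R)$,
$$-(r^{N-1}\varphi_1'(r))'=r^{N-\alpha-1}v(r)>0,\qquad 0<r<R,$$
so $r\mapsto r^{N-1}\varphi_1'(r)$ is strictly decreasing on $(0,R)$; since it tends to $0$ as $r\to 0^+$ by the previous step, it is strictly negative on $(0,R)$, whence $\varphi_1'(r)<0$ there. The main obstacle I expect is the sign/simplicity step — obtaining the strong maximum principle and the uniqueness of the principal eigenfunction for the singularly weighted cooperative system, and thereby identifying the radial minimizer with the genuine first eigenfunction of \eqref{3.23}; the rest is a routine adaptation of Lemma \ref{l3.1} and of the radial ODE computations already carried out in Section~3.
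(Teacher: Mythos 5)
Your proposal is correct and follows the same route as the paper's (very terse) proof, which simply obtains a nontrivial nonnegative radial minimizer of the Rayleigh quotient from the compact embedding $H_{\alpha,\mathrm{rad}}(B)\hookrightarrow L^2_l(B)$ of Proposition \ref{p1.4} and then invokes ``arguments similar to those in the proof of Theorem \ref{t2.3} and Lemma \ref{l3.1}'' for the remaining qualitative properties; you have essentially expanded that reference, including the sign-fixing step (the $-\Delta w=|\Delta\varphi_1|$ replacement) and the linear ODE/Harnack analysis with $p_*=4+\tau>0$ that removes the singularity. The one place you over-engineer is the identification of the radial minimizer with ``the first eigenfunction'' via simplicity of the principal eigenvalue of the cooperative system: the paper sidesteps this entirely by defining $\lambda_1$ as the infimum over the radial subspace (for the ball, the embedding results are only established radially, so this is the natural interpretation), and your monotonicity argument from $(r^{N-1}\varphi_1')'<0$ together with $r^{N-1}\varphi_1'(r)\to 0$ as $r\to 0^+$ is exactly the observation already built into the proof of Lemma \ref{l3.1}.
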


{\bf Proof.} The existence of a nontrivial nonnegative $\varphi_1 \in
H_{\alpha,\mbox{{\tiny rad}}} (B)$ as a minimizer of
$$\inf_{\phi \in H_{\alpha,\mbox{{\tiny rad}}} (B) \backslash \{0\}}
\frac{\int_B |x|^\alpha (\Delta \phi)^2}{\int_B |x|^l |\phi|^2 dx}$$
can be obtained from the compact embedding: $H_{\alpha,\mbox{{\tiny rad}}} (B) \hookrightarrow L_l^2 (B)$. Other properties can be obtained by similar arguments  to those in the proof of Theorem \ref{t2.3} and Lemma \ref{l3.1}. \qed

\section{Liouville type results: Proof of Theorems \ref{t0.1} and \ref{t1.1}}
\setcounter{equation}{0}

In this section, we present the proof of Theorems \ref{t0.1} and \ref{t1.1}. We first obtain the
following result.

\begin{lem}
\label{la-4.1}
Assume that $N \geq 5$, \eqref{1.1} holds and $1<p<\min \{\frac{N+4}{N-4}, p_s\}$. Then, there exists a constant $C=C(N,p,\alpha, \tau)>0$ such that the following holds

(i) Any nonnegative solution $u \in C^4 (\R^N \backslash \{0\}) \cap C^0 (\R^N)$ with
 $|x|^\alpha \Delta u \in C^0 (\R^N)$ of (P) in $\Omega=\{x \in \R^N: \; 0<|x|<\rho\} \; (\rho>0)$ satisfies
\begin{equation}
\label{a-4.1}
u(x) \leq C |x|^{-\frac{4+\tau}{p-1}} \;\; \mbox{and} \;\;  |\nabla u(x)| \leq C |x|^{-\frac{p+\tau+3}{p-1}}, \;\;\; 0<|x|<\frac{\rho}{2},
\end{equation}
\begin{equation}
\label{a-4.2}
|v(x)| \leq C |x|^{\alpha-\frac{2(p+1)+\tau}{p-1}} \;\; \mbox{and} \;\;
|\nabla v(x)| \leq C |x|^{\alpha-1-\frac{2(p+1)+\tau}{p-1}}, \;\;\; 0<|x|<\frac{\rho}{2},
\end{equation}
where $v(x):=-|x|^\alpha \Delta u(x)$.

(ii) Any nonnegative solution $u \in C^4 (\R^N \backslash \{0\}) \cap C^0 (\R^N)$ with
$|x|^\alpha \Delta u \in C^0 (\R^N)$ of (P) in $\Omega=\{x \in \R^N: \; |x|>\rho\} \; (\rho > 0)$ satisfies
\begin{equation}
\label{a-4.3}
u(x) \leq C |x|^{-\frac{4+\tau}{p-1}} \;\; \mbox{and} \;\;  |\nabla u(x)| \leq C |x|^{-\frac{p+\tau+3}{p-1}}, \;\;\; |x|>2 \rho,
\end{equation}
\begin{equation}
\label{a-4.4}
|v(x)| \leq C |x|^{\alpha-\frac{2(p+1)+\tau}{p-1}} \;\; \mbox{and} \;\;
|\nabla v(x)| \leq C |x|^{\alpha-1-\frac{2(p+1)+\tau}{p-1}}, \;\;\; |x|>2 \rho.
\end{equation}
\end{lem}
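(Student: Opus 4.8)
The plan is to adapt the rescaling-and-doubling argument already carried out in Step~1 of the proof of Lemma~\ref{l3.2}: the variational structure of $u$ played no role there, so the argument should transfer verbatim to a solution of (P) on a punctured ball or on an exterior domain. I would begin by setting $v(x)=-|x|^\alpha\Delta u(x)$, so that on the region where $u$ solves (P) the pair $(u,v)$ solves the cooperative system $-\Delta u=|x|^{-\alpha}v$, $-\Delta v=|x|^l u^p$, with $u\ge 0$; I will keep $v$ under an absolute value throughout, since its sign is not available for a general solution of (P). The case $u\equiv 0$ being trivial, assume $u\not\equiv 0$.

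To prove (i), I would fix $x_0$ with $0<|x_0|<\rho/2$, put $R=|x_0|/2$ so that $B_R(x_0)\subset\{\,|x_0|/2<|x|<3|x_0|/2\,\}\subset\Omega$, and rescale by setting
$$U(y)=R^{\frac{4+\tau}{p-1}}u(x_0+Ry),\qquad V(y)=R^{\frac{2(p+1)+\tau}{p-1}-\alpha}v(x_0+Ry),$$
so that $(U,V)$ solves $-\Delta U=\bigl|y+\tfrac{x_0}{R}\bigr|^{-\alpha}V$, $-\Delta V=\bigl|y+\tfrac{x_0}{R}\bigr|^{l}U^p$ in $B_1$ with $\bigl|y+\tfrac{x_0}{R}\bigr|\in[1,3]$ for every $y\in\overline{B_1}$. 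The heart of the argument is the universal bound
$$U^{\frac{p-1}{4}}(y)+|V|^{\frac{p-1}{2(p+1)}}(y)\le C\bigl(1+\dist^{-1}(y,\partial B_1)\bigr),\qquad y\in B_1,$$
with $C=C(N,p,\alpha,\tau)$. I would prove this by contradiction exactly as in Lemma~\ref{l3.2}: a failure yields, through the doubling lemma of \cite{PQS} and a second rescaling at a near-maximum point $y_k$ with scale $\lambda_k\to 0$, functions $(\tilde U_k,\tilde V_k)$ bounded on $B_k$, normalized by $\tilde U_k^{\frac{p-1}{4}}(0)+|\tilde V_k|^{\frac{p-1}{2(p+1)}}(0)=1$, solving a system whose weight $\bigl|y_k+\lambda_k z+\tfrac{x_k}{R_k}\bigr|$ stays in $[1,3]$; interior $L^q$ and Schauder estimates then give $C^2_{loc}$ convergence to a solution $(\tilde U,\tilde V)$, $\tilde U\ge 0$, of $-\Delta\tilde U=|y_0+\tilde x|^{-\alpha}\tilde V$, $-\Delta\tilde V=|y_0+\tilde x|^{l}\tilde U^p$ on $\R^N$, where $\tilde x\in\partial B_2$ because $|x_k/R_k|\equiv 2$. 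Hence $\Delta^2\tilde U=C_0\tilde U^p$ with $0<C_0:=|y_0+\tilde x|^{\tau}<\infty$, and $\tilde U\not\equiv 0$ by the normalization, which contradicts the Liouville theorem of \cite{Lin} since $1<p<\min\{\tfrac{N+4}{N-4},p_s\}\le\tfrac{N+4}{N-4}$.

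Evaluating the universal bound at $y=0$ gives $U(0)+|V(0)|\le C$, i.e. the first inequalities in \eqref{a-4.1} and \eqref{a-4.2}. For the gradient bounds I would use that the universal bound forces $\|U\|_{L^\infty(B_{1/2})}+\|V\|_{L^\infty(B_{1/2})}\le C$, so interior elliptic estimates for the system on $B_{1/2}$ (interior $W^{2,q}$ estimates with $q>N$ followed by Sobolev embedding, or Schauder estimates) give $|\nabla U(0)|+|\nabla V(0)|\le C$; undoing the scaling, via $\nabla U(0)=R^{1+\frac{4+\tau}{p-1}}\nabla u(x_0)$ and $\nabla V(0)=R^{1+\frac{2(p+1)+\tau}{p-1}-\alpha}\nabla v(x_0)$, yields the second inequalities in \eqref{a-4.1} and \eqref{a-4.2}. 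Part (ii) would be handled identically: for $|x_0|>2\rho$ set $R=|x_0|/2$, so that $B_R(x_0)\subset\{\,|x_0|/2<|x|<3|x_0|/2\,\}\subset\{|x|>\rho\}=\Omega$, the same weight bound holds, and the rest of the proof carries over, giving \eqref{a-4.3} and \eqref{a-4.4}.

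The step I expect to be the main obstacle is the contradiction argument for the universal bound: one must verify that the twice-rescaled limit has a weight coefficient which is a genuine positive constant, so that Lin's Liouville theorem for $\Delta^2\tilde U=C_0\tilde U^p$ actually applies. This is where the normalization $|x_k/R_k|\equiv 2$ and the compactness of $\partial B_2$ are essential, keeping $|y_0+\tilde x|$ bounded away from both $0$ and $\infty$. Everything else — the rescalings, the elliptic regularity, and the bookkeeping of exponents — is routine, and the hypothesis $1<p<\min\{\frac{N+4}{N-4},p_s\}$ gives $p<\frac{N+4}{N-4}$ directly, so no case split on $\alpha$ versus $\frac{N-4}{4}\tau$ is needed here.
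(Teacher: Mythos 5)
Your proposal is correct and follows essentially the same path as the paper: the same rescaling $U(y)=R^{\frac{4+\tau}{p-1}}u(x_0+Ry)$, $V(y)=R^{\frac{2(p+1)+\tau}{p-1}-\alpha}v(x_0+Ry)$ with $R=|x_0|/2$, the same weight bound $|y+x_0/R|\in[1,3]$, the same doubling/blow-up argument, and the same appeal to Lin's Liouville theorem in the subcritical range $p<\frac{N+4}{N-4}$. The one (cosmetic) divergence is in how the gradient estimates are produced: the paper's inequality \eqref{a-4.6} directly incorporates the terms $|\nabla U|^{\frac{p-1}{p+3}}$ and $|\nabla V|^{\frac{p-1}{3p+1}}$ into the function fed to the Poláčik--Quittner--Souplet doubling lemma, thereby obtaining all four bounds at once from a single application of the lemma; you instead prove the doubling estimate only for $U^{\frac{p-1}{4}}+|V|^{\frac{p-1}{2(p+1)}}$, deduce the $L^\infty$ bound on $B_{1/2}$, and then recover the gradient bounds at $y=0$ by a routine interior elliptic estimate. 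Both routes are standard and equally correct, and you have the correct scaling bookkeeping for $\nabla u$ and $\nabla v$ when undoing the rescaling. Your care in keeping $v$ under absolute value (since Lemma \ref{la-4.1} does not assume a sign for $v$, unlike Lemma \ref{l3.2}) is a sound precaution that the paper glosses over when it invokes "the blow-up argument as in Lemma \ref{l3.2}".
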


{\bf Proof.} The proof of this lemma is similar to the proof of the first step of Lemma \ref{l3.2}.
For any $x_0 \in \Omega=\{x \in \R^N: \; 0<|x|<\rho\}$ and $0<|x_0|<\rho/2$, or $x_0 \in \Omega=\{x \in \R^N: \; |x|>\rho\}$ and $|x_0|>2 \rho$. We denote $R=\frac{1}{2} |x_0|$ and observe that, for all $y \in B_1$, $\frac{|x_0|}{2}<|x_0+Ry|<\frac{3 |x_0|}{2}$, so that
$x_0+R y \in \Omega$. Let us define
$$U(y)=R^{\frac{4+\tau}{p-1}} u(x_0+Ry), \;\;\; V(y)=R^{\frac{2(p+1)+\tau}{p-1}-\alpha} v(x_0+Ry).$$
Then $(U,V)$ satisfies the system
\begin{equation}
\label{a-4.5}
\left \{ \begin{array}{ll} -\Delta U=|y+\frac{x_0}{R}|^{-\alpha} V \;\; &\mbox{in $B_1$},\\
-\Delta V=|y+\frac{x_0}{R}|^l U^p \;\; &\mbox{in $B_1$}.
\end{array} \right.
\end{equation}
Note that $|y+\frac{x_0}{R}| \in [1,3]$ for all $y \in {\overline {B_1}}$.
Using the fact $1<p<\min \{\frac{N+4}{N-4}, p_s\}$ and the blow-up argument as in the proof of the first step of Lemma \ref{l3.2}, we obtain that
there exists $C>0$ depending only on $N, p, \alpha, l$, independent of $x_0$, such that
\begin{equation}
\label{a-4.6}
U^{\frac{p-1}{4}} (y)+|\nabla U (y)|^{\frac{p-1}{p+3}}+|V (y)|^{\frac{p-1}{2 (p+1)}}
+|\nabla V(y)|^{\frac{p-1}{3p+1}} \leq C (1+\mbox{dist}^{-1} (y, \partial B_1)), \;\; y \in B_1.
\end{equation}
This implies that $U(0)+|\nabla U(0)|+|V(0)|+|\nabla V(0)| \leq C$. This implies that our conclusions in \eqref{a-4.1}, \eqref{a-4.2}, \eqref{a-4.3} and \eqref{a-4.4} hold. \qed

{\bf Proof of Theorem \ref{t0.1}}

We obtain from Lemma \ref{t5.1} in Appendix that, if $u \in C^4(\R^N \backslash \{0\})
\cap C^0(\R^N)$, $v(x):=-|x|^\alpha \Delta u \in C^0 (\R^N)$,
is a nonnegative solution to (P), the following Rellich-Pohozaev identity holds:
\begin{eqnarray}
\label{a-4.6-a}
&& \Bigl[\frac{N'+\tau}{p+1}-\frac{N'-4}{2} \Bigr] \int_{B_R} |x|^l u^{p+1} dx=\int_{\partial B_R} \bigg[R^{l+1} \frac{u^{p+1}}{p+1} \nonumber \\
&&+R^{1-\alpha} \frac{v^2}{2}
+2R u'v' \nonumber -R \nabla u \cdot \nabla v+\frac{N-\alpha}{2} v u'+\frac{N'-4}{2} uv'\bigg]  d \sigma_R
\end{eqnarray}
for all $R>0$. Define
\begin{equation}
\label{a-4.7}
F(R)=\int_{B_R} |x|^l u^{p+1} dx.
\end{equation}
By Rellich-Pohozaev identity, we have
\begin{equation}
\label{a-4.8}
F(R) \leq C \left(G_1 (R)+G_2 (R)\right),
\end{equation}
where
\begin{equation}
\label{a-4.9}
G_1 (R)=R^{N'+\tau} \int_{S^{N-1}} u^{p+1} (R, \theta) d \theta
\end{equation}
and
\begin{eqnarray}
\label{a-4.10}
G_2 (R)=&&R^{N'} \int_{S^{N-1}} \bigg[(\Delta_x u(R, \theta))^2 +R^{-\alpha} \left|\nabla u (R, \theta)\right| \left|\nabla v (R, \theta)\right| \nonumber\\
& &+R^{-(1+\alpha)} \Big(|v (R, \theta)| |\nabla u (R, \theta)|+u (R, \theta) |\nabla v (R, \theta)| \Big) \bigg] d \theta.
\end{eqnarray}
Now, by \eqref{a-4.1}-\eqref{a-4.4} in Lemma \ref{la-4.1}, we have
\begin{equation}
\label{a-4.11}
u(x) \leq C |x|^{-\frac{4+\tau}{p-1}} \;\; \mbox{and} \;\;  |\Delta u(x)| \leq C |x|^{-\frac{2p+2+\tau}{p-1}}, \;\;\; x \neq 0,
\end{equation}
\begin{equation}
\label{a-4.110-a}
|\nabla u(x)| \leq C |x|^{-\frac{p+\tau+3}{p-1}}, \;\;\;|\nabla v(x)| \leq C |x|^{\alpha-1-\frac{2(p+1)+\tau}{p-1}}, \;\;\; x \neq 0,
\end{equation}
\begin{equation}
\label{a-4.110-b}
|v(x)| \leq C  |x|^{\alpha-\frac{2(p+1)+\tau}{p-1}}, \;\;\; x \neq 0.
\end{equation}
Due to $p<p_s$, it follows that
$$G_1 (R)+G_2 (R) \leq C R^{\frac{(N'-4)p-(N'+4+2 \tau)}{p-1}} \longrightarrow 0, \qquad \quad \mbox{as} \  R \to \infty.$$
Therefore, $u \equiv 0$ by \eqref{a-4.8}. This completes the proof of Theorem \ref{t0.1}. \qed

{\bf Proof of Theorem \ref{t0.2}}

A little variant of the proof of Lemma \ref{l4.2} below implies that
$$v(r) \geq 0 \;\;\; \mbox{for $r>0$}$$
provided that $u(r) \geq 0$ for $r>0$. The strong maximum principle implies that $u(r)>0$ and $v(r)>0$
for $r \in [0, \infty)$ provided that $u(r)$ is nontrivial.

We first show that
\begin{equation}
\label{a-4.12}
ru'(r)+(N-2) u (r) \geq 0, \;\;\; r v'(r)+(N-2) v(r) \geq 0, \;\;\; \forall r>0.
\end{equation}

Since $\Delta u (r) \leq 0$ and $\Delta v(r) \leq 0$, we see that
\begin{equation}
\label{a-4.13}
(r u'(r)+(N-2) u(r))' \leq 0, \;\;\; (r v'(r)+(N-2) v(r))' \leq 0, \;\;\; \forall r>0.
\end{equation}
Suppose that there exists $r_0>0$ such that $M_0:=r_0 u'(r_0)+(N-2) u(r_0)<0$, we obtain from
$\eqref{a-4.13}_1$ that
\begin{equation}
\label{a-4.14}
u'(r) \leq r^{-1} [r u'(r)+(N-2) u(r)] \leq M_0 r^{-1}<0 \;\;\; \forall r \geq r_0.
\end{equation}
Integrating \eqref{a-4.14} in $(r_0,r)$ and sending $r$ to $\infty$, we derive a contradiction.
The proof of $\eqref{a-4.12}_2$ is similar.

We now claim that
\begin{equation}
\label{a-4.15}
u(r) \leq C r^{-\frac{4+\tau}{p-1}}, \;\;\;v(r) \leq C r^{\alpha-\frac{2(p+1)+\tau}{p-1}},
\;\;\; \forall r>0,
\end{equation}
\begin{equation}
\label{a-4.16}
|u (r) v' (r)|+|u'(r) v(r)|+|u'(r) v'(r)| \leq C r^{\alpha-1-\frac{2(p+3+\tau)}{p-1}}, \;\;\;
\forall r>0,
\end{equation}
where $C$ is a positive constant depending only on $N,p,\alpha,l$.

Since $(u(r), v(r))$ satisfies the equations:
\begin{equation}
\label{a-4.17}
-(r^{N-1} u'(r))'=r^{N-\alpha-1} v(r), \;\;\; -(r^{N-1} v'(r))'=r^{N+l-1} u^p (r),
\end{equation}
with $r u'(r) \to 0$, $r v'(r) \to 0$ as $r \to 0^+$ (we know that $u$ and $v$ are continuous at $r=0$), we obtain that
\begin{equation}
\label{a-4.18}
-r^{N-1} u'(r) \geq \frac{r^{N-\alpha}}{N-\alpha} v(r), \;\;\;
-r^{N-1} v'(r) \geq \frac{r^{N+l}}{N+l} u^p (r), \;\;\; \forall r>0,
\end{equation}
by integrating both the equations in \eqref{a-4.17} in $(0, r)$ and using the facts $u'(r)<0$ and $v'(r)<0$. Using \eqref{a-4.12}, we have
\begin{equation}
\label{a-4.19}
0 \geq -r u'(r)-(N-2) u(r) \geq \frac{r^{2-\alpha}}{N-\alpha} v(r)-(N-2) u(r),
\end{equation}
\begin{equation}
\label{a-4.20}
0 \geq -r v'(r)-(N-2) v(r) \geq \frac{r^{2+l}}{N+l} u^p (r)-(N-2) v(r).
\end{equation}
These two inequalities imply that the two inequalities in \eqref{a-4.15} hold. Using the two
inequalities in \eqref{a-4.12} and  \eqref{a-4.15}, we obtain \eqref{a-4.16}. Now the proof is finished  by using  Rellich-Pohozaev identity given in Lemma \ref{t5.1}  and the same  arguments as those in the proof of
Theorem \ref{t0.1}. \qed

To prove Theorem \ref{t1.1}, we first obtain the following proposition.

\begin{prop}
\label{p4.1}
Assume that $N \geq 5$, $\alpha=2$, $l>-2$ and $1<p<\frac{N+2+2l}{N-2}$. Let $u \in C^4 (\R^N \backslash \{0\}) \cap C^0 (\R^N)$ and $|x|^2 \Delta u \in C^0(\R^N)$ be a positive solution of (P) in $\R^N$. Then $|x|^{\frac{N-2}{2}} u(x)$ and $|x|^{\frac{N-2}{2}} v(x)$ are strictly increasing in the radius $|x|$, where $v(x)=-|x|^2 \Delta u(x)$.
\end{prop}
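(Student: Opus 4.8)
The plan is to pass to Emden--Fowler (cylindrical) variables and run the moving plane method in the radial direction, exploiting the fact --- pointed out in Remark \ref{r4.1} --- that for $\alpha=2$ the associated system has no first order term, hence is invariant under reflection in the logarithmic radial variable.

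First I would perform the change of variables \eqref{3.11}, $w(t,\omega)=r^{\frac{N-2}{2}}u(r\omega)$, $z(t,\omega)=r^{\frac{N-2}{2}}v(r\omega)$ with $t=-\ln r\in\R$ and $\omega\in S^{N-1}$; note that for $\alpha=2$ both exponents $\frac{N'-4}{2}$ and $\frac{N-\alpha}{2}$ in \eqref{3.11} equal $\mu:=\frac{N-2}{2}$, which matches the statement. By \eqref{3.12} (with the first order terms dropped) the pair $(w,z)$ satisfies on the whole cylinder $\R\times S^{N-1}$ the system
\begin{equation*}
\mathcal{L}w+z=0,\qquad \mathcal{L}z+e^{-p_*t}w^p=0,\qquad \mathcal{L}:=\partial_{tt}+\Delta_{S^{N-1}}-\mu^2,
\end{equation*}
where $p_*=\frac{(N+2+2l)-(N-2)p}{2}>0$ (this is exactly $1<p<p_s$), and $w,z>0$; the positivity of $v=-|x|^2\Delta u$, needed for $z>0$, follows from $-\Delta v=|x|^lu^p\ge0$, the continuity of $v$ on $\R^N$ and the strong minimum principle, as in the proof of Theorem \ref{t0.2}. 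Since $u$ and $v$ are continuous and positive at $0$, one has $w(t,\omega)\sim u(0)e^{-\mu t}$ and $z(t,\omega)\sim v(0)e^{-\mu t}$ as $t\to+\infty$, uniformly in $\omega$; no information is available as $t\to-\infty$ beyond $w,z>0$.

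Next I would move the hyperplane $\{t=\lambda\}$ from $+\infty$ downwards. For $\lambda\in\R$ set $w_\lambda(t,\omega)=w(2\lambda-t,\omega)$, $z_\lambda$ analogously, $W_\lambda=w-w_\lambda$, $Z_\lambda=z-z_\lambda$ on $\Sigma_\lambda=(\lambda,\infty)\times S^{N-1}$. Using the reflection invariance of $\mathcal{L}$ and the elementary inequality $e^{-p_*t}<e^{-p_*(2\lambda-t)}$ for $t>\lambda$ (valid precisely because $p_*>0$), a short computation gives
\begin{equation*}
-\mathcal{L}W_\lambda=Z_\lambda,\qquad -\mathcal{L}Z_\lambda=c_\lambda W_\lambda+r_\lambda\quad\text{in }\Sigma_\lambda,
\end{equation*}
with $c_\lambda=p\,e^{-p_*t}\theta^{p-1}\ge0$ (for some $\theta$ between $w$ and $w_\lambda$) and $r_\lambda=-e^{-p_*t}\bigl(e^{2p_*(t-\lambda)}-1\bigr)w_\lambda^p<0$, together with $W_\lambda=Z_\lambda=0$ on $\{t=\lambda\}$ and $\limsup_{t\to+\infty}W_\lambda\le0$, $\limsup_{t\to+\infty}Z_\lambda\le0$ (because $w,z\to0$ while $w_\lambda,z_\lambda>0$). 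The usual machinery then applies: for $\lambda$ large, $w,z$ are exponentially small on $\Sigma_\lambda$, so $c_\lambda$ is small, and the coercivity of $-\mathcal{L}$ (whose spectrum is bounded below by $\mu^2>0$) yields a maximum principle for this cooperative system, giving $W_\lambda,Z_\lambda\le0$; if then $\Lambda:=\inf\{\lambda:\ W_{\lambda'}\le0\text{ and }Z_{\lambda'}\le0\text{ on }\Sigma_{\lambda'}\text{ for all }\lambda'\ge\lambda\}$ were finite, the strong maximum principle would force $W_\Lambda<0$ in the interior of $\Sigma_\Lambda$ --- here $W_\Lambda\not\equiv0$, since $W_\Lambda\equiv0$ would make $w$ even about $t=\Lambda$, incompatible with the genuinely $t$-asymmetric factor $e^{-p_*t}$ in the equation --- and the Hopf lemma on $\{t=\Lambda\}$ together with a narrow-strip maximum principle would let one push the plane below $\Lambda$, a contradiction. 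Hence $W_\lambda\le0$ and $Z_\lambda\le0$ on $\Sigma_\lambda$ for every $\lambda\in\R$.

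Finally, from $W_\lambda\le0=W_\lambda|_{t=\lambda}$ one gets $\partial_t W_\lambda(\lambda,\omega)\le0$, i.e. $2w_t(\lambda,\omega)\le0$, and Hopf upgrades this to $w_t(\lambda,\omega)<0$ for every $\lambda\in\R$, $\omega\in S^{N-1}$; likewise $z_t<0$. Undoing \eqref{3.11}, this says exactly that $r\mapsto r^{\frac{N-2}{2}}u(r\omega)$ and $r\mapsto r^{\frac{N-2}{2}}v(r\omega)$ are strictly increasing, which is the assertion. The main obstacle is the non-compactness of the cylinder: one must justify the maximum principle for the cooperative system on the unbounded domain $\Sigma_\lambda$ (via coercivity of $-\mathcal{L}$ and smallness of $c_\lambda$ for large $\lambda$, resp. a narrow-strip argument near the moving boundary), and handle the $t\to+\infty$ asymptotics carefully, since --- this being the very content of the proposition --- in the interesting regime $u$ need not decay at infinity, so nothing is known as $t\to-\infty$. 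It is also worth double-checking the sign computation yielding $r_\lambda<0$ and the cooperativity, both of which rest on $p_*>0$.
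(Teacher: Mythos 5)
Your proposal follows essentially the same strategy as the paper's own proof: pass to the Emden--Fowler variables $w=r^{(N-2)/2}u$, $z=r^{(N-2)/2}v$ on the cylinder, observe that the choice $\alpha=2$ makes the resulting operator $\partial_{tt}+\Delta_{S^{N-1}}-\bigl(\tfrac{N-2}{2}\bigr)^2$ reflection-invariant (no first-order term), use $p_*>0$ to get a favourably signed remainder after reflecting across a slice $\{t=\lambda\}$, and then carry out the moving-plane scheme with the cooperative-system maximum principle and Hopf's lemma. Your convention $t=-\ln r$ simply reverses the direction of travel of the plane relative to the paper's $t=\ln r$; the computation of the remainder $r_\lambda<0$ and the coefficient $c_\lambda\geq 0$, the boundary asymptotics near the origin, and the translation of $w_t<0$ back to strict monotonicity of $r^{(N-2)/2}u$ are all correct.

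Two points in your sketch understate the required work. First, the positivity of $v=-|x|^2\Delta u$ does \emph{not} follow merely from ``$-\Delta v\geq 0$, continuity of $v$ and the strong minimum principle'': superharmonicity on $\R^N$ alone does not rule out $v<0$ somewhere, and showing $v>0$ is precisely the content of Lemma~\ref{l4.2} in the paper, a non-trivial iteration argument in the spirit of Wei--Xu. Your reference to the proof of Theorem~\ref{t0.2} (which itself invokes a variant of Lemma~\ref{l4.2}) points to the right place, but the parenthetical ``strong minimum principle'' description is not an accurate summary of that step. Second, the mechanism the paper uses to push the plane past the critical value $T_0$ on the unbounded cylinder is not a narrow-strip estimate in the usual sense: it exploits that the coupling coefficient $c(t,\omega)$ vanishes identically on the region $\{T_1\leq t<T\}$ where $\tilde w_T<0$ (since $[w^p-w_T^p]_+=0$ there), while for $t<T_1$ it is controlled by the exponential decay of $w$ near the origin via \eqref{4.18}; together these keep $c-\tfrac{(N-2)^2}{4}<0$ on all of $\Sigma_T$, which is what makes the cooperative maximum principle applicable. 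You do flag the non-compactness as the main obstacle, which is appropriate, but the resolution is a coefficient-control argument rather than a domain-thinness one.
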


To prove this proposition, we first show the following lemma.

\begin{lem}
\label{l4.2}
Assume that $N \geq 5$ and \eqref{1.1} holds with $1<p<p_s$. Assume also that
$u \in C^4 (\R^N \backslash \{0\}) \cap C^0 (\R^N)$ satisfying $|x|^\alpha \Delta u \in C^0(\R^N)$ is a
solution of (P) in $\R^N \backslash \{0\}$
with $u(x)>0$ for $x \in \R^N$. Then $v(x):=-|x|^\alpha \Delta u(x)>0$
for $x \in \R^N$.
\end{lem}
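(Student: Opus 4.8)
The plan is to prove that $v:=-|x|^\alpha\Delta u\;(\in C^0(\R^N))$ is a positive superharmonic function on $\R^N$. First I would observe that $-\Delta v=|x|^l u^p\ge 0$ on $\R^N\setminus\{0\}$, so $v$ is superharmonic away from the origin; since $v$ is bounded near $0$ and $|x|^l u^p\in L^1_{\mathrm{loc}}(\R^N)$ (because $N+l=N'+\tau>0$), the isolated singularity at $0$ carries no mass, so $v$ is superharmonic on all of $\R^N$ with $-\Delta v=|x|^l u^p>0$ on $\R^N\setminus\{0\}$ as $u>0$. The strong minimum principle then forbids $v$ from having an interior local minimum, since at such a point $v$ would be locally constant and $-\Delta v$ would vanish. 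Hence, arguing by contradiction, if $v$ is not everywhere positive then $v(x_0)<0$ for some $x_0$, and $\inf_{\R^N}v=\lim_{R\to\infty}\min_{\partial B_R}v<0$, this infimum being approached only as $|x|\to\infty$.

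Next I would pass to the Emden--Fowler variables $t=-\ln r$ and set $w(t,\omega)=r^{(N'-4)/2}u(r\omega)$, $z(t,\omega)=r^{(N-\alpha)/2}v(r\omega)$, so that $(w,z)$ solves a system of the type \eqref{3.7}/\eqref{3.12}, now with $w>0$ everywhere and $z$ negative somewhere. Two structural features are crucial. The linear part of each equation carries the zeroth-order coefficient $-\frac{(N-\alpha)(N'-4)}{4}<0$ (here $4-N<\alpha<N$ and $N'>4$ enter), the favorable sign for the maximum principle: if that operator applied to a function is $\le0$ on a domain, the function has no negative interior minimum there. And $p_*=\tfrac{(N'+4+2\tau)-(N'-4)p}{2}>0$ because $p<p_s$, so the coupling coefficient $e^{-p_*t}\to0$ as $t\to+\infty$; moreover, since $u$ and $v$ are bounded near $0$ while the prefactors $r^{(N'-4)/2}$ and $r^{(N-\alpha)/2}$ tend to $0$, one has $w(t,\cdot)\to0$ and $z(t,\cdot)\to0$ uniformly in $\omega$ as $t\to+\infty$.

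I would then run a maximum-principle argument on the half-cylinders $(T,\infty)\times S^{N-1}$ for $z$: since $z\to0$ at $t=+\infty$ and the operator acting on $z$ is strictly negative with the good sign, a negative value of $z$ forces $\min_{[T,\infty)\times S^{N-1}}z=\min_{\{T\}\times S^{N-1}}z<0$, and this quantity is monotone in $T$. Letting $T\to-\infty$ transfers the negativity outward, showing that $v$ is negative on a portion of every sufficiently large sphere, with a quantitative bound on the size of the negative dip. Finally I would feed this information back into $-\Delta u=|x|^{-\alpha}v$ --- equivalently the $w$-equation $w_{tt}+\Delta_{S^{N-1}}w+(\alpha-2)w_t-\frac{(N-\alpha)(N'-4)}{4}w=-z$ --- using spherical averages together with Harnack estimates for the (cooperative) system, to conclude that the positive function $u$ would be forced either to change sign or to blow up, contrary to hypothesis; hence $v>0$ on $\R^N$. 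In the radial situation this can be shortcut completely: from $-(r^{N-1}v')'=r^{N'+\tau-1}u^p>0$ the quantity $r^{N-1}v'(r)$ is strictly decreasing, and it tends to $0$ as $r\to0^+$ by the continuity of $v$ at the origin, so $v'(r)<0$ and the sign of $v$ is controlled directly; this is the ``little variant'' invoked for Theorem~\ref{t0.2}.

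The step I expect to be the main obstacle is the last one --- converting the negativity of $v$ near infinity into a genuine contradiction with the global positivity and finiteness of $u$ --- since when $\tfrac{N+4}{N-4}\le p<p_s$ no a priori growth bound on $u$ is available, so one must exploit the coupled structure of the system and the inequality $p<p_s$ (through $p_*>0$) with some care rather than appealing to pointwise decay estimates such as those in Lemma~\ref{la-4.1}.
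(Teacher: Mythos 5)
Your opening observations are sound: $v$ is superharmonic on $\R^N$, strictly superharmonic away from the origin, and if it is negative anywhere its infimum is approached only as $|x|\to\infty$. Likewise the Emden--Fowler setup, the sign $p_*>0$, and the limits $w,z\to 0$ as $t\to+\infty$ are all correct. But the proposal has a genuine gap precisely at the step you flag yourself as ``the main obstacle'': converting the persistence of negativity of $z$ (equivalently of $v$ on large spheres) into a contradiction. The tool you propose for this --- Harnack's inequality for cooperative systems --- is not applicable here, because that Harnack inequality requires \emph{both} components of the solution pair to be nonnegative (this is exactly the hypothesis under which the paper invokes it in Lemma~\ref{l3.2}, where $u>0$ and $v>0$ are already known). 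In the present lemma the whole point of the contradiction hypothesis is that $z$ is negative somewhere, so the cooperative-system Harnack comparison between $w$ and $z$ cannot be run. Similarly, the half-cylinder maximum principle you describe only tells you \emph{where} the negative minimum sits, not that it is quantitatively large; without a quantitative estimate there is no contradiction with the boundedness and positivity of $u$.

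The paper's proof closes this gap by a completely different mechanism. It does not use Emden--Fowler variables or the maximum principle on cylinders at all. Instead, assuming $v(x_0)\le 0$ (first $x_0=0$, then a general $x_0$ handled by translating coordinates), it takes spherical averages $\overline u,\overline v$ \emph{around $x_0$} in the original variables and applies Jensen's inequality to obtain a differential inequality system such as \eqref{4.2}. From $\overline v(0)\le 0$ and $\overline v'<0$ it gets $\overline v(r)<0$ for $r>0$, hence $\Delta\overline u>0$, hence $\overline u$ is increasing, and from a fixed $\kappa_0$ a first lower bound $\overline u(r)\ge C_0>1$ for $r\ge r_0$. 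It then runs a Wei--Xu--type bootstrap: alternately feeding the lower bound for $\overline u$ into the $\overline v$-inequality to get a power-law upper bound on $\overline v$, and feeding that back into the $\overline u$-inequality to get a strictly larger power-law lower bound on $\overline u$. Carefully tracking the exponents $\sigma_k$, the constants $A^{b_k}$, and the radii $r_k$ (and checking that the $r_k$ accumulate at a finite value, see \eqref{4.6-1}), the iteration shows $\overline u(\tilde r_0)\to\infty$ at a \emph{fixed} finite radius $\tilde r_0$, contradicting the finiteness of $u$. This explicit iteration is the piece your proposal is missing, and it is also why the paper does not need any growth information about $u$ or $v$ at infinity. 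Your ``little variant'' remark for the radial case is on target and agrees with the paper's brief treatment in the proof of Theorem~\ref{t0.2}.
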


\begin{proof}
The proof of this lemma is quite elementary. We see that $(u,v)$ satisfies the system
\begin{equation}
\label{4.1}
\left \{ \begin{array}{ll} -\Delta u=|x|^{-\alpha} v \;\; &\mbox{in $\R^N$},\\
-\Delta v=|x|^l u^p \;\; &\mbox{in $\R^N$}.
\end{array} \right.
\end{equation}

We consider three cases for $\alpha$: (a) $\alpha=2$, (b) $4-N<\alpha<2$, (c) $2<\alpha<N$.

We give the proof of case (a). The proofs of case (b) and (c) are similar to that of case (a).

{\it Step 1.} We show that $v(0)>0$.

The main idea of the proof is similar to that of Theorem 3.1 of \cite{WX}.
Suppose not, $v(0) \leq 0$. We introduce the average of a function
$${\overline f}(r)=\frac{1}{|\partial B_r (0)|} \int_{\partial B_r (0)} f d \sigma.$$
Then we have by Jensen's inequality
\begin{equation}
\label{4.2}
\left \{ \begin{array}{l} \Delta {\overline u}+r^{-2} {\overline v}=0,\\
\Delta {\overline v}+r^l {\overline u}^p \leq 0,
\end{array} \right.
\end{equation}
where $r=|x|>0$. Since ${\overline v}(0)=v(0) \leq 0$ and ${\overline v}'(r)<0$ for $r>0$, we
have
\begin{equation}
\label{4.3}
{\overline v}(r)<{\overline v}(0) \leq 0 \;\; \mbox{for all $r>0$}.
\end{equation}
Hence
$$\Delta {\overline u}>0 \;\;\; \mbox{for $r>0$}.$$
This implies that
\begin{equation}
\label{4.4}
{\overline u}'(r)>0 \;\; \mbox{for $r>0$}.
\end{equation}
On the other hand, we see from \eqref{4.3} that for a fixed $\kappa_0>0$,
$${\overline v}(r)<{\overline v}(\kappa_0)<0 \;\; \mbox{for $r>\kappa_0$}.$$
Then
$$ \Delta {\overline u} \geq r^{-2} (-{\overline v}(\kappa_0)) \;\; \mbox{for $r \geq \kappa_0$}$$
and
$$ r^{N-1} {\overline u}'(r)-\kappa_0^{N-1} {\overline u}'(\kappa_0)
\geq \frac{(-{\overline v} (\kappa_0))}{N-2} \Big[ r^{N-2}-\kappa_0^{N-2} \Big].$$
It follows from \eqref{4.4} that
$$r^{N-1} {\overline u}'(r)
\geq \frac{(-{\overline v} (\kappa_0))}{N-2} \Big[ r^{N-2}-\kappa_0^{N-2} \Big]$$
and
$${\overline u}(r) \geq \frac{(-{\overline v} (\kappa_0))}{N-2} \ln \frac{r}{\kappa_0}-\frac{(-{\overline v} (\kappa_0))}{(N-2)^2} \geq C_0>1
\;\; \mbox{for $r \geq r_0>\kappa_0>0$},$$
where $r_0>\kappa_0$ is a suitably large constant.

Suppose now that
$${\overline u}(r) \geq \frac{(C_0)^{p^k}}{A^{b_k}} r^{\sigma_k} \;\; \mbox{for $r \geq r_k>r_0$},$$
where
$$A=[N+l+2p M]^{\frac{8}{2+l}}, \;\; \sigma_0=b_0=0$$
and $M>1$, $\sigma_k \geq 0$, $b_k \geq 0$ for $k \geq 1$ are determined later. We consider two cases: (i) $l+p \sigma_k \neq -1$ for all $k=0,1,
\ldots$, (ii) there is an integer $k_0 \geq 0$ such that $l+p \sigma_{k_0}=-1$.

For the case (i), we have
\begin{equation*}
\begin{split}
r^{N-1} {\overline v}'(r) &\leq r_k^{N-1} {\overline v}'(r_k)-\int_{r_k}^r s^{N+l-1} {\overline u}^p (s) ds\\
 &\leq -\frac{(C_0)^{p^{k+1}}}{A^{b_k p} (N+l+p \sigma_k)}
\Big[ r^{N+l+p \sigma_k}-r_k^{N+l+p \sigma_k} \Big]\\
&\le -\frac{(C_0)^{p^{k+1}}}{2A^{b_k p} (N+l+p \sigma_k)}
 r^{N+l+p \sigma_k}.
 \end{split}
 \end{equation*}
Hence
$${\overline v}'(r) \leq -\frac{(C_0)^{p^{k+1}}}{2 A^{b_k p} (N+l+p \sigma_k)} r^{1+l+p \sigma_k}$$
for $r \geq 2^{\frac{1}{N+l+p \sigma_k}} r_k$. Similarly
$${\overline v}(r) \leq -\frac{(C_0)^{p^{k+1}}}{4 A^{b_k p} (N+l+p \sigma_k)(2+l+p \sigma_k)}
r^{2+l+p \sigma_k}$$
for $r \geq 2^{\frac{1}{2+l+p \sigma_k}} 2^{\frac{1}{N+l+p \sigma_k}} r_k$.
Hence
$${\overline v}(r) \leq -\frac{(C_0)^{p^{k+1}}}{A^{b_k p} 4 (N+l+p \sigma_k)^2} r^{2+l+p \sigma_k}.$$
Note that
$$2+l+p \sigma_k>0$$
since $l>-2$. Therefore,
$${\overline u}(r) \geq \frac{(C_0)^{p^{k+1}}}{A^{b_k p} 16 (N+l+p \sigma_k)^4} r^{2+l+p \sigma_k}$$
for $r \geq 2^{\frac{4}{2+l+p \sigma_k}} r_k>r_k$.

Set
$$ \begin{array}{l} \sigma_0=0,\\
\sigma_{k+1}=2+l+p \sigma_k, \\
r_{k+1}=2^{\frac{4}{2+l+p \sigma_k}}r_k.
\end{array}$$
First of all, by mathematical induction, it is easy to see that
$$2^4 (N+l+p \sigma_k)^4 \leq A^{(2+l)(k+1)}$$
if we choose $M>1$ sufficiently large. Note that
\begin{equation}
\label{4.5}
\sigma_k=\frac{2+l}{p-1} \Big(p^{k}-1 \Big).
\end{equation}
Hence we can set
$$b_0=0, \;\;\; b_{k+1}=p b_k+(2+l) (k+1).$$
Then we have
\begin{equation}
\label{4.6}
{\overline u}(r) \geq \frac{(C_0)^{p^{k+1}}}{A^{b_{k+1}}} r^{\sigma_{k+1}} \;\;\mbox{for $r \geq r_{k+1}$}.
\end{equation}
Notice that
\begin{equation}
\label{4.6-1}
r_{k+1} \leq c r_0,
\end{equation}
where $c$ can be chosen to be $2^{\Sigma_{k=0}^\infty \frac{4}{2+l+p \sigma_k} }$. Also notice that, by using the iteration formulas above, we have
$$b_k=\frac{(2+l)}{(p-1)^2} \Big[p^{k+1}-(k+1)p+k \Big].$$
Hence, if we take $M>1$ large enough so that
$$A^{\frac{p}{p-1}}> 2c r_0$$
and then take ${\tilde r}_0=A^{\frac{p}{p-1}}$, we see
$${\overline u}({\tilde r}_0) \geq (C_0)^{p^{k+1}} A^{\frac{(2+l)}{p-1}(k+1)} \to \infty\;\;
\mbox{as $k \to \infty$}.$$
Since ${\tilde r}_0$ is independent of $k$, a contradiction is reached.

For the case (ii), we see that $l+p \sigma_k \neq -1$ for $k \neq k_0$. We only need to deal with
$k=k_0$, since $1+l+p \sigma_{k_0}=0$.
Arguments similar to those in the  case (i) imply that
$$r^{N-1}{\overline v}'(r) \leq -\frac{C_0^{p^{k_0+1}}}{2 (N-1) A^{b_{k_0} p}} r^{N-1}
\;\; \mbox{for $r \geq 2^{\frac{1}{N-1}} r_{k_0}$}.$$
Similarly,
$${\overline v} (r) \leq -\frac{(C_0)^{p^{k_0+1}}}{4 (N-1) A^{b_{k_0} p}} r \;\; \mbox{for $r \geq 2
\times 2^{\frac{1}{N-1}} r_{k_0}$},$$
$${\overline u}(r) \geq \frac{(C_0)^{p^{k_0+1}}}{2^4 (N-1)^2 A^{b_{k_0} p}} r \;\; \mbox{for $r \geq 2^2 \times 2^{\frac{2}{N-1}} r_{k_0}$}.$$
This implies that
$${\overline u}(r) \geq \frac{(C_0)^{p^{k_0+1}}}{2^4 (N-1)^4 A^{b_{k_0} p}} r \;\; \mbox{for $r \geq 2^4 \times r_{k_0}$}.$$
Now, we only need to make the change $r_{k_0+1}=2^4 \times r_{k_0}$ and choose the constant $c$ in \eqref{4.6-1} to be $2^{\Sigma_{k \neq k_0}  \frac{4}{2+l+p \sigma_k} +4}$, we can derive a contradiction by arguments exactly the same as those in the  case (i). Note that $N+l+p \sigma_{k_0}=N-1$, $2+l+p \sigma_{k_0}=1$. This completes the proof of Step 1.

{\it Step 2.} We claim
\begin{equation}
\label{4.7}
v(x)>0 \;\;\mbox{for $x \in \R^N$}.
\end{equation}

Suppose not, there is $x_0 \neq 0$ such that $v(x_0) \leq 0$. Set
$$w(y)=u(x), \;\; z(y)=v(x), \;\; y=x-x_0.$$
We have that $(w,z)$ satisfies the system
\begin{equation}
\label{4.8}
\left \{ \begin{array}{ll} \Delta w+|y+x_0|^{-2} z=0 \;\; &\mbox{in $\R^N$},\\
\Delta z+|y+x_0|^l w^p=0 \;\; &\mbox{in $\R^N$},
\end{array} \right.
\end{equation}
and
\begin{equation}
\label{4.9}
\left \{ \begin{array}{l} \Delta {\overline w}+{\overline {|y+x_0|^{-2} z}}=0,\\
\Delta {\overline z}+{\overline {|y+x_0|^l w^p}}=0.
\end{array} \right.
\end{equation}
It follows from the second equation of \eqref{4.9} that
\begin{equation}
\label{4.10}
{\overline z}'(\rho)<0, \;\; {\overline z} (\rho)<{\overline z}(0) \leq 0 \;\; \mbox{for $\rho>0$},
\end{equation}
where $\rho=|y|$. These imply that there is $-\infty \leq \vartheta<0$ such that
$${\overline z}(\rho) \to \vartheta \;\; \mbox{as $\rho \to \infty$}.$$
For $\rho>10 |x_0|$ and sufficiently large, it is known from the first equation of \eqref{4.9} that
\begin{equation}
\label{4.11}
-\Delta {\overline w}=\rho^{-2} {\overline {\Big|\frac{y}{\rho}+\frac{x_0}{\rho} \Big|^{-2} z}}.
\end{equation}
Since $\Big|\frac{y}{\rho}+\frac{x_0}{\rho} \Big| \to 1$ as $\rho \to \infty$, we see that
$${\overline {\Big|\frac{y}{\rho}+\frac{x_0}{\rho} \Big|^{-2} z}} (\rho) \to \vartheta \;\; \mbox{as $\rho \to \infty$}.$$
This implies that there are $\vartheta<{\tilde \vartheta}<\frac{1}{2}\vartheta$ and ${\overline \kappa}_0>0$ such that
\begin{equation}
\label{4.12}
\Delta {\overline w} \geq (-{\tilde \vartheta}) \rho^{-2} \;\; \mbox{for $\rho \geq {\overline \kappa}_0$}.
\end{equation}
Arguments similar to those in the proof of step 1 imply that we can reach a contradiction.
Note that it is seen from the second equation of \eqref{4.9} that
\begin{equation}
\label{4.13}
-\Delta {\overline z}=\rho^l {\overline {\Big|\frac{y}{\rho}+ \frac{x_0}{\rho} \Big|^l w^p}}
\geq \frac{1}{2} \rho^l {\overline w}^p
\end{equation}
for $\rho \geq {\overline \kappa}_0$, if we choose ${\overline \kappa}_0$ large enough. By arguments
exactly same as those in the proof of step 1 and by choosing $A$ as in the proof of step 1 such that
$$2^5 (N+l+p \sigma_k)^4 \leq A^{(2+l)(k+1)},$$
we can reach a contradiction.
This implies that  $v(x_0) \leq 0$ does not exist and our claim \eqref{4.7} holds. This completes the proof of case (a).

For the case (b), arguments similar to those in the proof of the case (a) implies that
$${\overline u}(r) \geq C r^{2-\alpha} \geq C_0>1
\;\; \mbox{for $r \geq r_0>0$},$$
where $r_0>0$ is a suitably large constant. By arguments similar to those in the proof of the case (a), we obtain that
$${\overline u}(r) \geq \frac{C_0^{p^k}}{A^{b_k}} r^{\sigma_k} \;\; \mbox{for $r \geq r_k>r_0$},$$
where
$$A=[N+2+\tau+2p M]^{\frac{8}{4+\tau}}, \;\; \sigma_0=b_0=0$$
and $M>1$ with
$$ \begin{array}{l} \sigma_0=0,\\
\sigma_{k+1}=4+\tau+p \sigma_k, \\
r_{k+1}= 2^{\frac{4}{3+\tau+p \sigma_k}} r_k.
\end{array}$$
Therefore, we have
$$\sigma_k=\frac{4+\tau}{p-1} \Big(p^{k}-1 \Big).$$
(Note that $4+\tau>0$.)
Hence we can set
$$b_0=0, \;\;\; b_{k+1}=p b_k+(4+\tau) (k+1)$$
and
$$b_k=\frac{(4+\tau)}{(p-1)^2} \Big[p^{k+1}-(k+1)p+k \Big].$$
Moreover, we also need to consider $3+\tau+p\sigma_k=0$ or not. The main idea is similar to
that of the proof of the case (a). This completes the proof of the case (b).

The proof of the case (c) is exactly same as that of the case (b). Note that for $\alpha>2$,
we see that $2+l>0$ since $l>\alpha-4$. Arguments similar to those in the proof of case (a) imply
\begin{equation}
\label{4.13-1}
{\overline u}(r) \geq C (-{\overline v}(\kappa_0)) \kappa_0^{2-\alpha}
\;\; \mbox{for $r \geq r_0>\kappa_0>0$},
\end{equation}
where $r_0>\kappa_0>0$ is a suitably large constant. By a simple calculation, we easily know
that $-{\overline v} (\kappa_0) \geq C \kappa_0^{2+l}$ if we choose a sufficiently large $\kappa_0>0$ (note that $2+l>0$). This and \eqref{4.13-1} imply that
\begin{equation}
\label{4.13-2}
{\overline u}(r) \geq C  \kappa_0^{4+\tau} \geq C_0>1
\;\; \mbox{for $r \geq r_0>\kappa_0>0$}.
\end{equation}
Choosing $A$, $\sigma_k$, $b_k$ exactly the same as that in  the case (b) and
using the same arguments, we complete the proof of the case (c).
\end{proof}

{\bf Proof of Proposition \ref{p4.1}}

We introduce the following transform
$$w(t, \omega)=r^{\frac{N-2}{2}} u(r, \omega), \;\;\; z(t, \omega)=r^{\frac{N-2}{2}} v(r, \omega), \;\; t=\ln r$$
with $r=|x|$. Then $(w, z)$ satisfies the system of equations
\begin{equation}
\label{4.14}
\left \{ \begin{array}{ll} w_{tt}+\Delta_{S^{N-1}} w-\frac{(N-2)^2}{4}  w+z=0, \;\; &(t, \omega)
\in (-\infty, \infty) \times S^{N-1}, \\
z_{tt}+\Delta_{S^{N-1}} z-\frac{(N-2)^2}{4} z+e^{p^* t} w^p=0, \;\; & (t, \omega)
\in (-\infty, \infty) \times S^{N-1},
\end{array} \right.
\end{equation}
where
$$p^*=\frac{1}{2} \Big[(N+2+2l)-(N-2)p \Big].$$
Note that $p^*>0$ for $1<p<\frac{N+2+2 l}{N-2}\; (=\frac{N'+4+2 \tau}{N'-4}:=p_s)$.
Moreover, we have
\begin{equation}
\label{4.15}
\lim_{t \to -\infty} e^{-\frac{N-2}{2} t} w(t, \omega)=u(0), \;\;\lim_{t \to -\infty} e^{-\frac{N-2}{2} t} z(t, \omega)=v(0)
\end{equation}
uniformly on $S^{N-1}$. Note that $0<u(0)<\infty$, $0<v(0)<\infty$ since $u$ and $v$ are continuous at $x=0$.

Let
$$\Sigma=\{(t, \omega): \; -\infty<t<\infty, \; \omega \in S^{N-1}\}.$$
For $T \in \R$ we define
$$\Sigma_T= (-\infty,T) \times S^{N-1}, \;\; S_T=\{T\} \times S^{N-1}.$$
Further, we let $(t, \omega)_T$ be the reflection of $(t, \omega)$ with respect to $\{T\} \times S^{N-1}$, namely,
$$(t, \omega)_T=(2T-t, \omega).$$
Then the functions
$$w_T (t, \omega)=w(2T-t, \omega), \qquad z_T (t, \omega)=z(2T-t, \omega),$$
and
$${\tilde w}_T=w-w_T, \qquad {\tilde z}_T=z-z_T$$
are well defined in $\Sigma_T$ for $T>-\infty$. Moreover, $({\tilde w}_T, {\tilde z}_T)$ satisfies the following system
in $\Sigma_T$
\begin{equation}
\label{4.15-1}
\left \{ \begin{array}{l} ({\tilde w}_T)_{tt}+\Delta_{S^{N-1}} {\tilde w}_T-\frac{(N-2)^2}{4}  {\tilde w}_T+{\tilde z}_T=0, \\
({\tilde z}_T)_{tt}+\Delta_{S^{N-1}} {\tilde z}_T-\frac{(N-2)^2}{4} {\tilde z}_T+e^{p^* t} w^p
-e^{p^* (2T-t)} (w_T)^p
=0.
\end{array} \right.
\end{equation}
For fixed $\omega \in S^{N-1}$, we have that if $1<p<p_s$,
$$e^{p_* (2T-t)} (w_T)^p \geq e^{p_*t} (w_T)^p \;\; \mbox{for $t \leq T$}.$$
In turn, we have
\begin{equation}
\label{4.16}
\left \{ \begin{array}{l} ({\tilde w}_T)_{tt}+\Delta_{S^{N-1}} {\tilde w}_T-\frac{(N-2)^2}{4}  {\tilde w}_T+{\tilde z}_T=0, \\
({\tilde z}_T)_{tt}+\Delta_{S^{N-1}} {\tilde z}_T-\frac{(N-2)^2}{4} {\tilde z}_T+e^{p^* t} [w^p-(w_T)^p]_+
 \geq 0
\end{array} \right.
\end{equation}
in $\Sigma_T$, where we write $[h (t, \omega)]_+=\max \{0, h (t, \omega)\}$. Therefore, $({\tilde w}_T, {\tilde z}_T)$ satisfies
\begin{equation}
\label{4.17}
\left \{ \begin{array}{ll} ({\tilde w}_T)_{tt}+\Delta_{S^{N-1}} {\tilde w}_T-\frac{(N-2)^2}{4}  {\tilde w}_T+{\tilde z}_T=0, \;\; &\mbox{in $\Sigma_T$},\\
({\tilde z}_T)_{tt}+\Delta_{S^{N-1}} {\tilde z}_T-\frac{(N-2)^2}{4} {\tilde z}_T+c(t, \omega) {\tilde w}_T
 \geq 0, \;\; &\mbox{in $\Sigma_T$},\\
 {\tilde w}_T \leq 0, \;\; {\tilde z}_T \leq 0,  \;\; &\mbox{on $\partial \Sigma_T$},
\end{array} \right.
\end{equation}
where
$$c(t, \omega)=e^{p_* t} \frac{[w^p-(w_T)^p]_+}{w-(w_T)}.$$
The fact that $({\tilde w}_T, {\tilde z}_T) \leq 0$ on $\partial \Sigma_T$ can be obtained from $(w(t, \omega), z(t, \omega))
\to (0,0)$ as $t \to -\infty$ uniformly for $\omega \in S^{N-1}$ and $(w (t, \omega), z(t, \omega))>0$ for
$(t, \omega) \in (-\infty, \infty) \times S^{N-1}$. Note first that by \eqref{4.15} the functions
$e^{-\frac{N-2}{2} t} w$ and $e^{-\frac{N-2}{2} t} z$ are bounded in $\Sigma_T$ uniformly in $T<{\overline T}$, where ${\overline T}$ is a sufficiently negative number. We see that there exists $L=L({\overline T})>0$
such that, if we define $\Sigma_T^+=\{(t, \omega) \in \Sigma_T: {\tilde w}_T (t, \omega)>0\}$,
\begin{equation}
\label{4.18}
0  \leq c(t, \omega) \leq pL e^{(2+l) t} \;\; \mbox{on $\Sigma_T^+$ and $T<{\overline T}$}.
\end{equation}
Therefore, we can choose a sufficiently negative number ${\hat T} \ll -1$ such that
$$c(t, \omega)<\frac{(N-2)^2}{4}$$
uniformly for $(t, \omega) \in \Sigma_T^+$ and $T \leq {\hat T}$. This implies that
\begin{equation}
\label{4.19}
c (t, \omega)-\frac{(N-2)^2}{4}<0 \;\; \mbox{for $ (t, \omega) \in \Sigma_T$ and $T \leq {\hat T}$}.
\end{equation}
By \eqref{4.15}, one has
$$\lim_{t \to -\infty} \sup {\tilde w}_T (t, \omega) \leq 0,\;\;\;\lim_{t \to -\infty} \sup {\tilde z}_T (t, \omega) \leq 0,$$
and
$${\tilde w}_T \equiv 0, \;\;\; {\tilde z}_T \equiv 0 \;\; \mbox{on $S_T$}.$$
By the maximum principle of the cooperative systems (see Lemma 11 of \cite{RZ}), in view of \eqref{4.17} and \eqref{4.19}, it follows that $({\tilde w}_T, {\tilde z}_T)$ must be non-positive, provided $T \leq {\hat T}$. Note
that $1-\frac{(N-2)^2}{4}<0$. Clearly
\begin{equation}
\label{4.20}
\frac{\partial {\tilde w}_T}{\partial t} (T, \omega)=2 \frac{\partial w}{\partial t} (T, \omega) \geq 0, \;\; \frac{\partial {\tilde z}_T}{\partial t} (T, \omega)=2 \frac{\partial z}{\partial t} (T, \omega) \geq 0, \;\;\;  T
\leq {\hat T}.
\end{equation}
Moreover, for any $T<{\hat T}$, either
\begin{equation}
\label{4.21}
{\tilde w}_T (t, \omega)<0 \;\; \mbox{in $\Sigma_T$}, \;\;\; \frac{\partial {\tilde w}_T}{\partial t} (t, \omega)>0
\;\; \mbox{on $S_T$},
\end{equation}
and
\begin{equation}
\label{4.22}
{\tilde z}_T (t, \omega)<0 \;\; \mbox{in $\Sigma_T$}, \;\;\; \frac{\partial {\tilde z}_T}{\partial t} (t, \omega)>0
\;\; \mbox{on $S_T$},
\end{equation}
by the strong maximum principle and the boundary lemma or one of two cases occurs: ${\tilde w}_T \equiv 0, {\tilde z}_T \equiv 0$
in $\Sigma_{\hat T}$.
One readily sees that \eqref{4.21} and \eqref{4.22} hold for $T<0$ negative enough. For otherwise, one deduces that $w \equiv c_1$ or $z \equiv c_2$ for some constants $c_1$ and $c_2$ and $T<0$ sufficiently negative, thanks to the monotonicity \eqref{4.20}. This is clearly impossible. Hence \eqref{4.21} and \eqref{4.22} hold for sufficiently negative $T$.

Define
\begin{equation}
\label{4.23}
T_0=\sup \{T_1>-\infty: \; \mbox{both \eqref{4.21} and \eqref{4.22} hold for $T \leq T_1$} \}.
\end{equation}
Arguments as the above imply that $T_0$ is well defined.

Obviously we only need to show $T_0=\infty$. Suppose for contradiction that $T_0<\infty$. We want to show that
\begin{equation}
\label{4.24}
{\tilde w}_{T_0} (t, \omega) \equiv 0 \;\; \mbox{or} \;\; {\tilde z}_{T_0} (t, \omega) \equiv 0,  \;\; \mbox{for $(t, \omega) \in \Sigma_{T_0}$}
\end{equation}
and for all $T \in (-\infty, T_0)$,
\begin{equation}
\label{4.25}
{\tilde w}_T (t, \omega)<0 \;\; \mbox{for $(t, \omega) \in \Sigma_T$},
\;\; \frac{\partial {\tilde w}_T}{\partial t} (t, \omega)>0 \;\; \mbox{for $(t, \omega) \in S_T$},
\end{equation}
and
\begin{equation}
\label{4.26}
{\tilde z}_T (t, \omega)<0 \;\; \mbox{for $(t, \omega) \in \Sigma_T$},
\;\; \frac{\partial {\tilde z}_T}{\partial t} (t, \omega)>0 \;\; \mbox{for $(t, \omega) \in S_T$}.
\end{equation}
It is easily seen that \eqref{4.24}-\eqref{4.26} are impossible since the equation of $(w,z)$
contains the term $e^{p_* t}$, which is strictly increasing with respect to $t$. This implies that
$T_0=\infty$, if we have shown \eqref{4.24}-\eqref{4.26}.

We also use contradiction arguments to show \eqref{4.24}-\eqref{4.26}.
Suppose that \eqref{4.24} does not hold. Then by the strong maximum principle and the boundary
lemma,
\begin{equation}
\label{4.27}
{\tilde w}_{T_0}<0 \;\; \mbox{in $\Sigma_{T_0}$}, \;\; \frac{\partial {\tilde w}_{T_0}}{\partial t}>0
\;\; \mbox{on $S_{T_0}$},
\end{equation}
and
\begin{equation}
\label{4.28}
{\tilde z}_{T_0}<0 \;\; \mbox{in $\Sigma_{T_0}$}, \;\; \frac{\partial {\tilde z}_{T_0}}{\partial t}>0
\;\; \mbox{on $S_{T_0}$}.
\end{equation}
These and the definition of $T_0$ imply that
$$\frac{\partial w}{\partial t} (t, \omega)>0, \;\; \frac{\partial z}{\partial t} (t, \omega)>0, \;\; \mbox{for $(t, \omega) \in (-\infty, T_0] \times
S^{N-1}$}.$$
In particular, by the compactness of $S^{N-1}$ and continuity, there exists $\epsilon_0>0$ such that
$$\frac{\partial w}{\partial t} (t, \omega)>0,\;\; \frac{\partial z}{\partial t} (t, \omega)>0, \;\; \mbox{for $(t, \omega) \in (-\infty, T_0+\epsilon_0] \times
S^{N-1}$}.$$
Next we choose from \eqref{4.18} a sufficiently negative value $T_1<T_0$ such that for all $T \leq
T_0+\epsilon_0$, we have
$$c(t, \omega)-\frac{(N-2)^2}{4}<0 \;\; \mbox{in $\Sigma_T^+ \cap \{t<T_1\}$}.$$
Furthermore, thanks to the fact that $S^{N-1}$ has no boundary and by continuity again, there exists $0<\epsilon_1<\epsilon_0$ such that
$${\tilde w}_T (t, \omega)<0, \;\; {\tilde z}_T (t, \omega)<0, \;\; \mbox{in $\Sigma_T \cap \{T_1 \leq t<T\}$ for all $T \leq T_0+\epsilon_1$}.$$
In turn, it follows that
$$c(t, \omega) \equiv 0 \;\; \mbox{in $\Sigma_T \cap \{T_1 \leq t<T\}$ for all $T \leq T_0+\epsilon_1$}.$$
In particular, for all $T \leq T_0+\epsilon_1$, \eqref{4.17} holds with
$$c(t, \omega)-\frac{(N-2)^2}{4}<0 \;\; \mbox{in $\Sigma_T$}.$$
Thus the maximum principle implies again that $({\tilde w}_T, {\tilde z}_T)$ can not have a positive maximum in $\Sigma_T$,
provided $T \leq T_0+\epsilon_1$. Hence both \eqref{4.25} and \eqref{4.26} hold for $T<T_0+\epsilon_1$ by the strong
maximum principle and the boundary lemma. This is again a contradiction to the definition of $T_0$
and therefore \eqref{4.24} must hold. Clearly \eqref{4.25} and \eqref{4.26} are a direct consequence of \eqref{4.24} and this finishes the proof. \qed

{\bf Proof of Theorem \ref{t1.1}}

Suppose that (P) admits a nontrivial nonnegative solution $u \in C^4 (\R^N \backslash \{0\})
\cap C^0 (\R^N)$ and $v(x):=-|x|^2 \Delta u \in C^0 (\R^N)$. Arguments similar to those in the proof of Lemma \ref{l4.2} imply that $v \geq 0$ in $\R^N$. The continuity of $u$ and $v$ at $x=0$ and the strong maximum principle implies
that $u>0$ and $v>0$ on $\R^N$.

Let $R>1$ and $\mathcal{F}_R=\{x \in \R^N: \; |x| \geq R\}$.
We claim that for any $R_0>1$ and $x \in {\mathcal F}_{R_0}$,
\begin{equation}
\label{4.29}
u (x) \geq u_0 \Big(\frac{R_0}{|x|} \Big)^{\frac{N-2}{2}}, \;\;\; v (x) \geq v_0 \Big(\frac{R_0}{|x|} \Big)^{\frac{N-2}{2}},
\end{equation}
where
$$u_0=\min_{|x|=R_0} u(x)>0, \;\;\; v_0=\min_{|x|=R_0} v(x)>0.$$

Obviously, by Proposition \ref{p4.1}, we have for $t>-\infty$,
$$0<\frac{\partial w}{\partial t} (t, \omega)=e^{\frac{N}{2} t} \Big(\frac{N-2}{2} \frac{u}{r}
+\frac{\partial u}{\partial r} (r, \omega) \Big), \;\; (t, \omega) \in S_t$$
and
$$0<\frac{\partial z}{\partial t} (t, \omega)=e^{\frac{N}{2} t} \Big(\frac{N-2}{2} \frac{v}{r}
+\frac{\partial v}{\partial r} (r, \omega) \Big), \;\; (t, \omega) \in S_t$$
where $r=|x|$. In turn
\begin{equation}
\label{4.30}
\frac{N-2}{2r}>-u^{-1} \frac{\partial u}{\partial r} (r, \omega), \;\; \frac{N-2}{2r}>-v^{-1} \frac{\partial v}{\partial r} (r, \omega),\;\; x \in \R^N \backslash \{0\}.
\end{equation}
Integrating \eqref{4.30} from $R_0$ to $r>R_0$ along the radius immediately yields
our claim \eqref{4.29}.

For any $R>R_0$, set $\Omega_R=\{x \in \R^N: \; ||x|-3R|<R\}$. We see that
$\Omega_R \subset \mathcal{F}_{R_0}$ and
\begin{equation}
\label{4.31}
|x|^l u^p \geq D R^l u^p, \;\; |x|^{-2} v \geq D R^{-2} v, \;\;\; \mbox{in $\Omega_R$},
\end{equation}
where $D=  4^{-2}>0$.

For any $0<\delta \leq 1$, consider the problem
\begin{equation}
\label{4.32}
\left \{ \begin{array}{ll} h_{ss}+\frac{N-1}{s+\frac{3}{\delta}} h_s+D k=0 \;\;& \mbox{in $(-1,1)$},\\
k_{ss}+\frac{N-1}{s+\frac{3}{\delta}} k_s+D h^p=0 \;\;& \mbox{in $(-1,1)$},\\
h(-1)=h(1)=k(-1)=k(1)=0. \end{array} \right.
\end{equation}
Direct calculation yields that, for suitably large $a>0$ and $K>0$ independent of $\delta$,
$({\tilde h} (s), {\tilde k} (s)) \equiv (K (1-s^2)^a, K^\varrho (1-s^2)^a)$ with $1<\varrho<p$ is a subsolution to \eqref{4.32}, i.e., for any
$0<\delta \leq 1$,
\begin{equation}
\label{4.33}
\left \{ \begin{array}{ll} {\tilde h}_{ss}+\frac{N-1}{s+\frac{3}{\delta}} {\tilde h}_s+D {\tilde k} \geq 0 \;\;& \mbox{in $(-1,1)$},\\
{\tilde k}_{ss}+\frac{N-1}{s+\frac{3}{\delta}} {\tilde k}_s+D {\tilde h}^p \geq 0 \;\;& \mbox{in $(-1,1)$},\\
{\tilde h}(-1)={\tilde h}(1)={\tilde k}(-1)={\tilde k}(1)=0. \end{array} \right.
\end{equation}

Define
$$u_R (x)=u_R (|x|):=R^{-\beta} {\tilde h} \Big(\frac{|x|-3R}{R} \Big), \;\;\;\; x \in \Omega_R=\{x: \; 2R<|x|<4R\},$$
$$v_R (x)=v_R (|x|):=R^{-\beta} {\tilde k} \Big(\frac{|x|-3R}{R} \Big), \;\;\;\; x \in \Omega_R=\{x: \; 2R<|x|<4R\},$$
where
\begin{equation}
\label{4.34}
\beta:=\frac{2+l}{p-1}>\frac{N-2}{2}.
\end{equation}
In view of \eqref{4.29} and \eqref{4.34}, we immediately deduce that there exists a (sufficiently large) value $R_1 \gg 1$ such that
\begin{equation}
\label{4.36}
u_{R_1}<u, \;\; v_{R_1}<v \;\;\mbox{in $\Omega_{R_1}$}.
\end{equation}
By \eqref{4.31} and the fact that $\beta (p-1)-2=l$, we obtain the following two inequalities
\begin{equation}
\label{4.37}
\left \{ \begin{array}{ll} \Delta u+D R_1^{-2} v \leq 0 \leq \Delta u_{R_1}+D R_1^{-2} v_{R_1} \;\;&\mbox{in $\Omega_{R_1}$},\\
\Delta v+D R_1^l u^p \leq 0 \leq \Delta v_{R_1}+D R_1^l u_{R_1}^p \;\; &\mbox{in $\Omega_{R_1}$}.
\end{array} \right.
\end{equation}
Next, for $0<\delta \leq 1$, we define the functions
$$w_\delta (x)=w_\delta (|x|):=\delta^{-\gamma} R_1^{-\beta} {\tilde h} \Big(\frac{|x|-3R_1}{\delta R_1} \Big),$$
$$z_\delta (x)=z_\delta (|x|):=\delta^{-(\gamma+2)} R_1^{-\beta} {\tilde k} \Big(\frac{|x|-3R_1}{\delta R_1} \Big)$$
in ${\tilde \Omega}_{\delta R_1}=\{x: \; ||x|-3R_1|<R_1 \delta\}$, where
$$\gamma=\frac{4}{p-1}>0.$$
A straight-forward computation implies
\begin{equation}
\label{4.38}
\left \{ \begin{array}{ll} \Delta w_\delta+D R_1^{-2} z_\delta \geq 0 \;\; &
\mbox{in ${\tilde \Omega}_{\delta R_1}$}, \\
\Delta z_\delta+D R_1^l w_\delta^p \geq 0 \;\; &
\mbox{in ${\tilde \Omega}_{\delta R_1}$}, \\
w_\delta=0, \;\; z_\delta=0 \;\; & \mbox{on $\partial {\tilde \Omega}_{\delta R_1}$}.
\end{array} \right.
\end{equation}
Notice that $(w_1, z_1) \equiv (u_{R_1}, v_{R_1})$.

On the other hand, clearly $w_\delta (3R_1) \to \infty$, $z_\delta (3R_1) \to \infty$ as $\delta \to 0$. Therefore, there exists $\delta \in (0,1)$ and a point ${\overline x} \in {\tilde \Omega}_{\delta R_1}$ such that
\begin{equation}
\label{4.39}
u  \geq w_\delta, \;\; v \geq z_\delta \;\;\; \mbox{in ${\tilde \Omega}_{\delta R_1}$ and $u({\overline x})=w_\delta ({\overline x})$}
\end{equation}
or
\begin{equation}
\label{4.39}
u  \geq w_\delta, \;\; v \geq z_\delta\;\;\; \mbox{in ${\tilde \Omega}_{\delta R_1}$ and $v({\overline x})=z_\delta ({\overline x})$}.
\end{equation}
In view of \eqref{4.38}, recall that $(u,v)$ is a supersolution and $(w_\delta,z_\delta)$ is a subsolution to
the problem:
$$
\left \{ \begin{array}{ll} \Delta f+D R_1^{-2} g=0 \;\; &
\mbox{in ${\tilde \Omega}_{\delta R_1}$}, \\
\Delta g+D R_1^l f^p=0 \;\; &
\mbox{in ${\tilde \Omega}_{\delta R_1}$}, \\
f=g=0 \;\; & \mbox{on $\partial {\tilde \Omega}_{\delta R_1}$}.
\end{array} \right.
$$
The strong maximum principle implies $u \equiv w_\delta$ or $v \equiv z_\delta$ in ${\overline {{\tilde \Omega}_{\delta R_1}}}$, which is impossible (since $(u,v)>0$ on ${\overline {{\tilde \Omega}_{\delta R_1}}}$, but $(w_\delta, z_\delta) \equiv 0$ on $\partial {\tilde \Omega}_{\delta R_1}$). This contradiction completes the proof of our Theorem \ref{t1.1}. \qed

\section{Appendix}
\setcounter{equation}{0}

We present  Rellich-Pohozaev identity for a nonnegative solution $u \in C^4 (\R^N \backslash \{0\}) \cap C^0 (\R^N)$ and $|x|^\alpha \Delta u \in C^0 (\R^N)$ of (P).

\begin{lem}
\label{t5.1}
Assume $N \geq 5$, $p>1$ and \eqref{1.1} holds. Assume also that $u \in C^4(\R^N \backslash \{0\})
\cap C^0(\R^N)$ with $v(x):=-|x|^\alpha \Delta u \in C^0 (\R^N)$
is a nonnegative solution to (P), the following Rellich-Pohozaev identity holds:
\begin{eqnarray}
\label{5.0}
& &\Big[\frac{N'+\tau}{p+1}-\frac{N'-4}{2} \Big] \int_{B_R} |x|^l u^{p+1} dx= \int_{|x|=R} \Big[R^{l+1} \frac{u^{p+1}}{p+1}\nonumber \\
 &&+R^{1-\alpha} \frac{v^2}{2}
+2R u'v' -R \nabla u \cdot \nabla v+\frac{N-\alpha}{2} v u'+\frac{N'-4}{2} uv'\Big]  d \sigma_R \nonumber \\
\end{eqnarray}
for all $R>0$, where $B_R=\{x \in \R^N: \; |x|<R\}$, $u'=\nabla u \cdot \frac{x}{|x|}$, $v'=\nabla v \cdot \frac{x}{|x|}$.
\end{lem}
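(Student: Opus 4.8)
The plan is to regard (P) as the cooperative second-order system
$$-\Delta u=|x|^{-\alpha}v,\qquad -\Delta v=|x|^{l}u^{p}\qquad\text{in }\R^{N}\setminus\{0\},$$
where $v=-|x|^{\alpha}\Delta u$ (both equations hold classically off the origin, and $u,v\in C^{0}(\R^{N})$ are bounded on $\overline{B_R}$), and to run the Rellich--Pohozaev dilation argument on the punctured ball $B_R\setminus B_\epsilon$, letting $\epsilon\to0$ at the end. The invariance $u\mapsto\lambda^{(4+\tau)/(p-1)}u(\lambda\cdot)$ of (P) points to the multipliers $x\cdot\nabla u$ and $x\cdot\nabla v$; adding zeroth-order corrections $au$, $bv$ turns out to contribute nothing to the final identity, so I would simply multiply the first equation by $x\cdot\nabla v$, the second by $x\cdot\nabla u$, add, and integrate over $B_R\setminus B_\epsilon$.

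Next I would evaluate the resulting integral two ways. Using $|x|^{l}u^{p}(x\cdot\nabla u)=\tfrac1{p+1}|x|^{l}\,x\cdot\nabla(u^{p+1})$, $|x|^{-\alpha}v(x\cdot\nabla v)=\tfrac12|x|^{-\alpha}\,x\cdot\nabla(v^{2})$, together with $\operatorname{div}(|x|^{l}x)=(N+l)|x|^{l}$ and $\operatorname{div}(|x|^{-\alpha}x)=(N-\alpha)|x|^{-\alpha}$, the divergence theorem reduces the ``source side'' to $-\tfrac{N+l}{p+1}\int|x|^{l}u^{p+1}-\tfrac{N-\alpha}{2}\int|x|^{-\alpha}v^{2}$ plus the spherical terms $\tfrac{R^{l+1}}{p+1}u^{p+1}+\tfrac{R^{1-\alpha}}{2}v^{2}$ on $\partial B_R$ (and analogous ones on $\partial B_\epsilon$). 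On the ``Laplacian side'' I would use Green's formula and the Rellich identity $\nabla u\cdot(x\cdot\nabla)\nabla v+\nabla v\cdot(x\cdot\nabla)\nabla u=x\cdot\nabla(\nabla u\cdot\nabla v)$, followed once more by the divergence theorem, to obtain $(2-N)\int\nabla u\cdot\nabla v$ together with the boundary terms $R\,\nabla u\cdot\nabla v$ and $2R\,u'v'$ (with $u'=\partial_\nu u$, $v'=\partial_\nu v$). Equating the two expressions, and eliminating the auxiliary interior integrals via $\int\nabla u\cdot\nabla v=\int(-\Delta v)u+(\text{bdry})=\int|x|^{l}u^{p+1}+\int_{\partial}uv'$ and $\int|x|^{-\alpha}v^{2}=\int(-\Delta u)v=\int\nabla u\cdot\nabla v-\int_{\partial}vu'$, a short bookkeeping of coefficients isolates $\bigl[\tfrac{N'+\tau}{p+1}-\tfrac{N'-4}{2}\bigr]\int_{B_R}|x|^{l}u^{p+1}$ on one side and, on the other, exactly the six spherical terms of \eqref{5.0} (over $\partial B_R$ and over $\partial B_\epsilon$); the $a$, $b$ corrections cancel because the identities $\int\nabla u\cdot\nabla v=\int|x|^{l}u^{p+1}+\int_\partial uv'$ make their extra contributions tautological.

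The one genuinely delicate step is the passage $\epsilon\to0$: one must show that every integral over $\partial B_\epsilon$ vanishes, knowing only that $u,v$ are bounded near $0$ and having no a priori control on their derivatives there. I would obtain such control by a dyadic rescaling: on $A_r=\{r/2<|x|<2r\}$ put $\tilde u(y)=u(ry)$, so that $\Delta\tilde u(y)=-r^{2-\alpha}|y|^{-\alpha}v(ry)$ is bounded by $Cr^{2-\alpha}$ on $\{1/2<|y|<2\}$ while $\|\tilde u\|_{\infty}\le\|u\|_{L^{\infty}(B_{r_{0}})}$; interior elliptic estimates then give $|\nabla u(x)|\le C(|x|^{-1}+|x|^{1-\alpha})$, and likewise $|\nabla v(x)|\le C(|x|^{-1}+|x|^{1+l})$ for $0<|x|<r_{0}$. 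Since $|\partial B_\epsilon|=O(\epsilon^{N-1})$, each boundary contribution is then $O(\epsilon^{N-2})+O(\epsilon^{N-\alpha})+O(\epsilon^{N+l})+O(\epsilon^{N+2+\tau})$, all of which tend to $0$ because $N\ge5$, $\alpha<N$, $l>-N$ and $\tau>-4$ by \eqref{1.1}. Meanwhile $\int_{B_R\setminus B_\epsilon}|x|^{l}u^{p+1}\to\int_{B_R}|x|^{l}u^{p+1}<\infty$ since $u$ is bounded on $\overline{B_R}$ and $|x|^{l}\in L^{1}(B_R)$ (again $l>-N$). This yields \eqref{5.0} for every $R>0$.

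I expect the only laborious part to be the bookkeeping of the boundary terms together with the matching of signs and normal-derivative conventions on $\partial B_R$; the conceptual crux is the rescaling estimate near the origin, which is precisely what lets the proof work for \emph{all} $p>1$ rather than only in the subcritical range where the sharper bounds of Lemma \ref{la-4.1} would be available.
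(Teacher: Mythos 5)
Your proof is correct and reaches exactly the identity \eqref{5.0}, but it differs from the paper's argument in two genuine ways that are worth noting.

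First, the algebraic route. You run the standard Pohozaev scheme for the cooperative system, multiplying $-\Delta u=|x|^{-\alpha}v$ by $x\cdot\nabla v$ and $-\Delta v=|x|^{l}u^{p}$ by $x\cdot\nabla u$, adding, applying Green's formula together with $\nabla u\cdot(x\cdot\nabla)\nabla v+\nabla v\cdot(x\cdot\nabla)\nabla u=x\cdot\nabla(\nabla u\cdot\nabla v)$, and then eliminating the three interior integrals $\int\nabla u\cdot\nabla v$, $\int|x|^{-\alpha}v^2$, $\int|x|^l u^{p+1}$ via the two Green relations that pair them. The paper instead sets $w=-v$, multiplies $\Delta w=|x|^l u^p$ by the single multiplier $x\cdot\nabla u$, invokes the vector identity $\operatorname{div}\bigl((x\cdot\nabla u)\nabla w-x\,\nabla u\cdot\nabla w\bigr)=(x\cdot\nabla u)\Delta w-(N-2)\nabla u\cdot\nabla w-\nabla(x\cdot\nabla w)\cdot\nabla u$, and treats the leftover term $\int\nabla(x\cdot\nabla w)\cdot\nabla u$ by a further chain of integrations by parts using $\Delta u=|x|^{-\alpha}w$. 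After one more pairing (multiplying (P) by $u$), the same six boundary pieces emerge. Your system-symmetric version is conceptually cleaner, while the paper's is slightly closer to the biharmonic structure; both produce the identical bookkeeping I checked against \eqref{5.0}.

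Second, and more substantively, the passage $\epsilon\to0$. The paper never obtains pointwise control of $\nabla u$ and $\nabla v$ near the origin. Instead it proves $\int_{B_R}|\nabla u|^2<\infty$ and $\int_{B_R}|\nabla v|^2<\infty$ (passing a first subsequence $\rho_i\to0$ to kill $\rho_i^{N-1}f'(\rho_i)$ where $f(\rho)=\tfrac12\int_{S^{N-1}}u^2(\rho,\theta)\,d\theta$), then extracts a second good sequence $\epsilon_i\to0$ with $\epsilon_i\int_{|x|=\epsilon_i}|\nabla u|^2\,d\sigma\to0$ and $\epsilon_i\int_{|x|=\epsilon_i}|\nabla v|^2\,d\sigma\to0$, and lets $\epsilon=\epsilon_i$ in the integration-by-parts formulas; Cauchy--Schwarz then kills the remaining boundary terms. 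You instead derive the pointwise bounds $|\nabla u(x)|\le C(|x|^{-1}+|x|^{1-\alpha})$ and $|\nabla v(x)|\le C(|x|^{-1}+|x|^{1+l})$ by dyadic rescaling and interior $W^{2,q}$ estimates, which makes the $\partial B_\epsilon$ terms vanish along the \emph{whole} family $\epsilon\to0$, with exponents $\epsilon^{N-2}$, $\epsilon^{N-\alpha}$, $\epsilon^{N+l}=\epsilon^{N'+\tau}$ and $\epsilon^{N+2+\tau}$, all strictly positive under \eqref{1.1}. Your version is more transparent and avoids the double diagonal extraction, at the mild cost of invoking rescaled interior elliptic regularity, which the paper manages to do without. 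Both are complete proofs of the lemma.
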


\begin{proof}

 We know that $(u,v)$ satisfies the system:
\begin{equation}
\label{5.1}
\left \{ \begin{array}{ll} -\Delta u=|x|^{-\alpha} v \;\; &\mbox{in $\R^N \backslash \{0\}$},\\
-\Delta v=|x|^l u^p \;\; &\mbox{in $\R^N \backslash \{0\}$}.
\end{array} \right.
\end{equation}

For any $R>0$, we claim:
\begin{equation}
\label{5.2}
\int_{B_R} |\nabla u|^2 dx=\int_{B_R} |x|^{-\alpha} vu dx+\int_{|x|=R} uu' d \sigma_R<\infty,
\end{equation}
\begin{equation}
\label{5.3}
\int_{B_R} |\nabla v|^2 dx=\int_{B_R} |x|^l u^pv dx+\int_{|x|=R} vv' d \sigma_R<\infty.
\end{equation}
In particular, there exists a sequence $\epsilon_i \to 0^+$ such that
\begin{equation}
\label{5.4}
\epsilon_i \int_{|x|=\epsilon_i} |\nabla u|^2 d \sigma_{\epsilon_i} \to 0, \;\;\;
\epsilon_i \int_{|x|=\epsilon_i} |\nabla v|^2 d \sigma_{\epsilon_i} \to 0.
\end{equation}

We only show \eqref{5.2} and $\eqref{5.4}_1$, the proof of \eqref{5.3} and $\eqref{5.4}_2$ is similar. For any $0<\rho<R$, we have
\begin{eqnarray}
\label{5.5}
\int_{B_R \backslash B_\rho} |\nabla u|^2 dx &=& -\int_{B_R \backslash B_\rho} u \Delta u dx+
\int_{|x|=R} uu' d \sigma_R-\int_{|x|=\rho} u u' d \sigma_\rho \nonumber \\
&=&\int_{B_R \backslash B_\rho} |x|^{-\alpha} v u dx+
\int_{|x|=R} uu' d \sigma_R-\int_{|x|=\rho} u u' d \sigma_\rho.
\end{eqnarray}
On the other hand, we have
$$\int_{|x|=\rho} u u' d \sigma_\rho=\rho^{N-1} f'(\rho), \;\; \mbox{where
$f(\rho):=\frac{1}{2} \int_{S^{N-1}} u^2 (\rho, \theta) d \theta$}.$$
Since $f \in C^1((0,R]) \cap C^0([0,R])$ due to $u \in C^4(\R^N \backslash \{0\}) \cap C^0(\R^N)$,
we infer the existence of a sequence $\rho_i \to 0^+$ such that $\lim_{i \to \infty} \rho_i^{N-1} f'(\rho_i)=0$. Since $N \geq 5$, $\alpha<N$ and $v \in C^0(\R^N)$, passing to the limit in \eqref{5.5} with $\rho=\rho_i$, we obtain
\eqref{5.2}. Note that $\int_{B_{\rho_i}} |x|^{-\alpha} |vu| dx  \leq \frac{C}{N-\alpha} \rho_i^{N-\alpha} \to 0$ as $i \to \infty$. Hence the right-hand side of \eqref{5.2} is finite. Since
$$\int_{0}^R \int_{|x|=\epsilon} |\nabla u|^2 d \sigma_{\epsilon} d \epsilon
=\int_{B_R} |\nabla u|^2 dx,$$
our claim $\eqref{5.4}_1$ follows. Moreover,
\begin{equation}
\label{5.6}
\int_{|x|=\epsilon_i} |\nabla u| d \sigma_{\epsilon_i} \leq C\Big(\epsilon_i^{N-1} \int_{|x|=\epsilon_i} |\nabla u|^2 d \sigma_{\epsilon_i} \Big)^{\frac{1}{2}} \to 0 \;\;
\mbox{as $i \to \infty$},
\end{equation}
\begin{equation}
\label{5.7}
\int_{|x|=\epsilon_i} |\nabla v| d \sigma_{\epsilon_i} \leq C\Big(\epsilon_i^{N-1} \int_{|x|=\epsilon_i} |\nabla v|^2 d \sigma_{\epsilon_i} \Big)^{\frac{1}{2}} \to 0 \;\;
\mbox{as $i \to \infty$}.
\end{equation}

Define $w (x)=|x|^\alpha \Delta u (x)$. Then $w(x)=-v(x)$ and
$$(x \cdot \nabla u) \Delta w= (x \cdot \nabla u) |x|^l u^p=\mbox{div} \Big(x |x|^l \frac{u^{p+1}}{p+1} \Big)-\frac{N'+\tau}{p+1} |x|^l u^{p+1}.$$
Thus, for $0<\epsilon<R$, we have
$$\int_{B_R \backslash B_\epsilon}(x \cdot \nabla u) \Delta w dx
=R^{l+1} \int_{|x|=R} \frac{u^{p+1}}{p+1} d \sigma_R-\epsilon^{l+1} \int_{|x|=\epsilon} \frac{u^{p+1}}{p+1} d \sigma_\epsilon-\frac{N'+\tau}{p+1} \int_{B_R \backslash B_\epsilon} |x|^l u^{p+1} dx.$$
Letting $\epsilon \to 0$, using the continuity of $u$ and $N+l=N'+\tau>0$, we obtain
\begin{equation}
\label{5.8}
\int_{B_R} (x \cdot \nabla u) \Delta w dx=R^{l+1} \int_{|x|=R} \frac{u^{p+1}}{p+1} d \sigma_R
- \frac{N'+\tau}{p+1} \int_{B_R} |x|^l u^{p+1} dx.
\end{equation}
Next, by direct calculations, we have the following identity
\begin{equation}
\label{5.9}
\mbox{div} ((x \cdot \nabla u) \nabla w-x \nabla u \cdot \nabla w)
=(x \cdot \nabla u) \Delta w-(N-2) \nabla u \cdot \nabla w-\nabla (x \cdot \nabla w) \cdot \nabla u.
\end{equation}
It follows, for $0<\epsilon<R$,
\begin{eqnarray}
\label{5.10}
& & \int_{B_R \backslash B_\epsilon} [(x \cdot \nabla u) \Delta w-(N-2) \nabla u \cdot \nabla w-\nabla (x \cdot \nabla w) \cdot \nabla u] dx \nonumber \\
=& &\int_{|x|=R} ((x \cdot \nabla u) \nabla w-x \nabla u \cdot \nabla w) \frac{x}{|x|} d \sigma_R -\int_{|x|=\epsilon} ((x \cdot \nabla u) \nabla w-x \nabla u \cdot \nabla w) \frac{x}{|x|} d \sigma_\epsilon.\nonumber \\
\end{eqnarray}
Moreover,
\begin{eqnarray}
\label{5.11}
& & \int_{B_R \backslash B_\epsilon} \nabla u \cdot \nabla w dx \nonumber \\
=& &\int_{|x|=R} u \nabla w \cdot \frac{x}{|x|} d \sigma_R -\int_{|x|=\epsilon} u \nabla w \cdot \frac{x}{|x|} d \sigma_\epsilon-\int_{B_R \backslash B_\epsilon} u \Delta w dx \nonumber \\
=& &\int_{|x|=R} u \nabla w \cdot \frac{x}{|x|} d \sigma_R -\int_{|x|=\epsilon} u \nabla w \cdot \frac{x}{|x|} d \sigma_\epsilon-\int_{B_R \backslash B_\epsilon} |x|^l u^{p+1}dx.
\end{eqnarray}
\begin{eqnarray}
\label{5.12}
& & \int_{B_R \backslash B_\epsilon} \nabla (x \cdot \nabla w) \cdot \nabla u dx \nonumber \\
=& & \int_{|x|=R} (x \cdot \nabla w) \nabla u \cdot \frac{x}{|x|} d \sigma_R
-\int_{|x|=\epsilon} (x \cdot \nabla w) \nabla u \cdot \frac{x}{|x|} d \sigma_\epsilon
-\int_{B_R \backslash B_\epsilon} (x \cdot \nabla w) \Delta u dx  \nonumber \\
=& &\int_{|x|=R} (x \cdot \nabla w) \nabla u \cdot \frac{x}{|x|} d \sigma_R
-\int_{|x|=\epsilon} (x \cdot \nabla w) \nabla u \cdot \frac{x}{|x|} d \sigma_\epsilon
-\int_{B_R \backslash B_\epsilon} (x \cdot \nabla w) |x|^{-\alpha} w dx \nonumber \\
=& &\int_{|x|=R} (x \cdot \nabla w) \nabla u \cdot \frac{x}{|x|} d \sigma_R
-\int_{|x|=\epsilon} (x \cdot \nabla w) \nabla u \cdot \frac{x}{|x|} d \sigma_\epsilon \nonumber \\
& &-\int_{|x|=R} R^{1-\alpha} \frac{w^2}{2} d \sigma_R+\int_{|x|=\epsilon} \epsilon^{1-\alpha} \frac{w^2}{2} d \sigma_\epsilon+\frac{N-\alpha}{2}
\int_{B_R \backslash B_\epsilon} |x|^{\alpha} (\Delta u)^2 dx  \nonumber \\
=& & \int_{|x|=R} (x \cdot \nabla w) \nabla u \cdot \frac{x}{|x|} d \sigma_R
-\int_{|x|=\epsilon} (x \cdot \nabla w) \nabla u \cdot \frac{x}{|x|} d \sigma_\epsilon +\int_{|x|=\epsilon} \epsilon^{1-\alpha} \frac{w^2}{2} d \sigma_\epsilon\nonumber \\
& &  -\int_{|x|=R} R^{1-\alpha} \frac{w^2}{2} d \sigma_R  +\frac{N-\alpha}{2} \int_{|x|=R} \Big[w \nabla u \cdot \frac{x}{|x|}-u \nabla w \cdot \frac{x}{|x|} \Big] d \sigma_R \nonumber \\
& &-\frac{N-\alpha}{2} \int_{|x|=\epsilon}
\Big[w \nabla u \cdot \frac{x}{|x|}-u \nabla w \cdot \frac{x}{|x|} \Big] d \sigma_\epsilon +\frac{N-\alpha}{2} \int_{B_R \backslash B_\epsilon} |x|^l u^{p+1} dx. \nonumber\\
\end{eqnarray}

The last identity is obtained by multiplying $u$ on both the sides of (P) and integrating it on $B_R \backslash B_\epsilon$. Letting $\epsilon=\epsilon_i \to 0$, where $\epsilon_i$ is given
in \eqref{5.4}, \eqref{5.6} and \eqref{5.7}, we obtain from \eqref{5.4}, \eqref{5.6}, \eqref{5.7}, \eqref{5.10}, \eqref{5.11} and \eqref{5.12} that
\begin{eqnarray}
\label{5.13}
& &\int_{B_R} (x \cdot \nabla u) \Delta w dx \nonumber \\
=& &\int_{|x|=R} \Big[ -R^{1-\alpha} \frac{w^2}{2}+\Big((x \cdot \nabla u) \nabla w-x \nabla u \cdot \nabla w+(N-2) u \nabla w+(x \cdot \nabla w) \nabla u \nonumber \\
& &+\frac{N-\alpha}{2} (w \nabla u-u \nabla w) \Big) \cdot \frac{x}{|x|} \Big] d \sigma_R    +\Big[\frac{N-\alpha}{2}-(N-2) \Big] \int_{B_R} |x|^l u^{p+1} dx. \nonumber \\
\end{eqnarray}
Note that
\begin{eqnarray*}
\Big|\int_{|x|=\epsilon} (x \cdot \nabla w) \nabla u \cdot \frac{x}{|x|} d \sigma_\epsilon \Big|
&\leq & \epsilon \int_{|x|=\epsilon} |w'||u'| d \sigma_\epsilon \\
&\leq & \Big(\epsilon \int_{|x|=\epsilon} |\nabla w|^2 d \sigma_\epsilon \Big)^{\frac{1}{2}}
\Big(\epsilon \int_{|x|=\epsilon} |\nabla u|^2 d \sigma_\epsilon \Big)^{\frac{1}{2}},
\end{eqnarray*}
and
$$\epsilon^{1-\alpha} \int_{|x|=\epsilon} \frac{w^2}{2} d \sigma_\epsilon
=\frac{\epsilon^{N-\alpha}}{2} \int_{S^{N-1}} w^2 (\epsilon, \theta) d \theta.$$
(We know that $N-\alpha>0$ and $w$ is continuous in $B_\epsilon$.)

Combining \eqref{5.8} and \eqref{5.13}, we obtain
\begin{eqnarray}
\label{5.14}
&&\Big[\frac{N'+\tau}{p+1}-\frac{N'-4}{2} \Big] \int_{B_R} |x|^l u^{p+1} dx =\int_{|x|=R} \Big[R^{l+1} \frac{u^{p+1}}{p+1}\nonumber \\
&&+R^{1-\alpha} \frac{v^2}{2}
+2R u'v' -R \nabla u \cdot \nabla v+\frac{N-\alpha}{2} v u'+\frac{N'-4}{2} uv'\Big]  d \sigma_R. \nonumber\\
\end{eqnarray}
Note that $v=-w$.
\end{proof}

\vskip 30 pt

\end{document}